\newcommand{\al}{\alpha}
\newcommand{\be}{\beta}
\newcommand{\V}{\mathcal{V}}
\newcommand{\R}{\mathbb{R}}
\newcommand{\N}{\mathbb{N}}
\newcommand{\mH}{\mathcal{H}}
\newcommand{\F}{\mathcal{F}}
\newcommand{\B}{\mathcal{B}}
\newcommand{\lan}{\langle}
\newcommand{\ran}{\rangle}
\newcommand{\la}{\lambda}
\newcommand{\lc}{\scalebox{2}{$\llcorner$}}
\newcommand{\La}{\Lambda}
\newcommand{\Si}{\Sigma}
\newcommand{\de}{\delta}
\newcommand{\De}{\Delta}
\newcommand{\ep}{\epsilon}
\newcommand{\pr}{\prime}
\newcommand{\Om}{\Omega}
\newcommand{\Ga}{\Gamma}
\newcommand{\ga}{\gamma}
\newcommand{\lap}{\triangle}
\newcommand{\ti}{\tilde}
\newcommand{\mL}{\mathcal{L}}
\newcommand{\mZ}{\mathbb{Z}}
\newcommand{\Z}{\mathcal{Z}}
\newcommand{\mS}{\mathcal{S}}
\newcommand{\f}{\mathbf{f}}
\newcommand{\m}{\mathbf{m}}
\newcommand{\M}{\mathbf{M}}
\newcommand{\bL}{\mathbf{L}}
\newcommand{\bI}{\mathbf{I}}
\newcommand{\mf}{\mathbf{f}}
\newcommand{\mA}{\mathfrak{A}}
\newcommand{\mF}{\mathbf{F}}
\newcommand{\mR}{\mathcal{R}}
\renewcommand{\baselinestretch}{1.2}
\begin{document}

\newtheorem{theorem}{Theorem}[section]
\newtheorem{proposition}[theorem]{Propostion}
\newtheorem{corollary}[theorem]{Corollary}

\newtheorem{claim}{Claim}

\theoremstyle{remark}
\newtheorem{remark}[theorem]{Remark}

\theoremstyle{definition}
\newtheorem{definition}[theorem]{Definition}

\theoremstyle{plain}
\newtheorem{lemma}[theorem]{Lemma}

\numberwithin{equation}{section}

%
%
%
%
%
%

\pagestyle{headings}
\renewcommand{\headrulewidth}{0.4pt}

\title{\textbf{Min-max minimal hypersurface in $(M^{n+1}, g)$ with $Ric_{g}>0$ and $2\leq n\leq 6$}}
\author{Xin Zhou\\}
\maketitle

\pdfbookmark[0]{}{beg}

\renewcommand{\abstractname}{}    
\renewcommand{\absnamepos}{empty} 
\begin{abstract}
\textbf{Abstract:} In this paper, we study the shape of the min-max minimal hypersurface produced by Almgren-Pitts in \cite{A2}\cite{P} corresponding to the fundamental class of a Riemannian manifold $(M^{n+1}, g)$ of positive Ricci curvature with $2\leq n\leq 6$. We characterize the Morse index, area and multiplicity of this min-max hypersurface. In particular, we show that the min-max hypersurface is either orientable and of index one, or is a double cover of a non-orientable minimal hypersurface with least area among all closed embedded minimal hypersurfaces.
\end{abstract}


\section{Introduction}

Almgren and Pitts developed a min-max theory for constructing embedded minimal hypersurface by global variational method \cite{A1}\cite{A2}\cite{P}. They showed that any Riemannian manifold $(M^{n+1}, g)$ for $2\leq n\leq 5$ has a nontrivial smooth closed embedded minimal hypersurface. Later on, Schoen and Simon \cite{SS} extended to the case of dimension $n=6$\footnote{They also showed the existence of a nontrivial minimal hypersurface with a singular set of Hausdorff dimension $n-7$ when $n\geq 7$.}. In \cite{A1}\cite{A2}, Almgren showed that the fundamental class $[M]\in H_{n+1}(M)$ of an orientable manifold $M$ can be realized as a nontrivial homotopy class in $\pi_{1}\big(\Z_{n}(M), \{0\}\big)$. Here $\Z_{n}(M)$ is the space of integral $n$-cycles in $M$ (see \cite[\S 2.1]{P}). Almgren and Pitts \cite{A2}\cite{P} showed that a min-max construction on the homotopy class in $\pi_{1}\big(\Z_{n}(M), \{0\}\big)$ corresponding to $[M]$ gives a nontrivial smooth embedded minimal hypersurface with possibly multiplicity. We will call this min-max hypersurface the one corresponding to the fundamental class $[M]$. Besides the existence, there is almost no further geometric information known about this min-max minimal hypersurface, e.g. the Morse index\footnote{See \cite[Chap 1.8]{CM2} for the definition of Morse index.}, volume and multiplicity. By the nature of the min-max construction and for the purpose of geometric and topological applications, it has been conjectured and demanded to know that these min-max hypersurfaces should have total Morse index less or equal than one (see \cite{PR}). Recently, Marques and Neves \cite{MN}  gave a partial answer of this question when $n=2$. They showed the existence of an index one heegaard surface in certain three manifolds. 
Later on, in their celebrated proof of the Willmore conjecture \cite{MN2}, Marques and Neves showed that the min-max surface has index five for a five parameter family of sweepouts in the standard three sphere $S^{3}$. 

In this paper, we study the shape of the min-max hypersurface corresponding to the fundamental class $[M]$ in the case when $(M^{n+1}, g)$ has positive Ricci curvature, i.e. $Ric_{g}>0$. In this case there exists no closed embedded stable minimal hypersurface (see \cite[Chap 1.8]{CM2}) in $M$. By exploring this special feature, we will give a characterization of the Morse index, volume and multiplicity of this min-max hypersurface. The study of Morse index was initiated by Marques-Neves in three dimensions \cite{MN}.

We always assume that $(M^{n+1}, g)$ is \emph{connected closed orientable with $2\leq n\leq 6$}. Hypersurfaces $\Si^{n}\subset M^{n+1}$ are always assumed to be \emph{connected closed} and \emph{embedded}. Denote
$$\mS=\{\Si^{n}\subset(M^{n+1}, g):\ \Si^{n} \textrm{ is a minimal hypersurface in } M\}.$$
Hence $\mS\neq\emptyset$ by \cite{P}\cite{SS}\cite{DT}. Let
\begin{equation}\label{minimal volume of embedded minimal hypersurfaces}
\left. W_{M}= \min_{\Si\in\mS}\Big\{ \begin{array}{ll}
V(\Si), \quad \textrm{ if $\Si$ is orientable}\\
2V(\Si), \quad \textrm{ if $\Si$ is non-orientable}
\end{array} \Big\},\right. 
\end{equation}
where $V(\Si)$ denotes the volume (sometime called area) of $\Si$. Our main result is as follows.
\begin{theorem}\label{main theorem1}
Let $(M^{n+1}, g)$ be any $(n+1)$ dimensional connected closed orientable Riemannian manifold with positive Ricci curvature and $2\leq n\leq 6$. Then the min-max minimal hypersurface $\Si$ corresponding to the fundamental class $[M]$ is:
\begin{itemize}
\vspace{-5pt}
\addtolength{\itemsep}{-0.7em}
\item[$(i)$] \underline{\emph{either}} orientable of multiplicity one, which has Morse index one and $V(\Si)=W_{M}$;
\item[$(ii)$] \underline{\emph{or}} non-orientable of multiplicity two with $2V(\Si)=W_{M}$.
\end{itemize}
\vspace{-5pt}
\end{theorem}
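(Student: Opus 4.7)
The plan is to first identify the Almgren-Pitts min-max width $W$ (the infimum of the maximum mass over sweepouts $\{\partial\Om_t\}$ representing the fundamental class $[M]$ in $\pi_1(\Z_n(M),\{0\})$) with $W_M$, and then read off the structure of $\Si$ from this equality.

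For the upper bound $W\le W_M$, I would construct, for every $\Si_0\in\mS$, a sweepout $\{\partial\Om_t\}_{t\in[0,1]}$ representing $[M]$ whose maximum mass is $V(\Si_0)$ when $\Si_0$ is orientable and tends to $2V(\Si_0)$ when $\Si_0$ is non-orientable. In the orientable case $\Si_0$ separates $M$ into two regions; since $Ric_g>0$ forbids stability, the lowest eigenfunction of the Jacobi operator generates a strictly area-decreasing normal variation, which can be integrated on each side of $\Si_0$ to form matched sweepouts with $\sup_t\mathbf{M}(\partial\Om_t)=V(\Si_0)$. In the non-orientable case $\Si_0$ is one-sided in the orientable ambient $M$, so the boundary $\ti\Si_0^\ep$ of its $\ep$-tubular neighborhood is orientable and separates $M$ with $V(\ti\Si_0^\ep)\to 2V(\Si_0)$ as $\ep\to 0$; sweeping between $\emptyset$ and $M$ by these neighborhoods then yields $\sup_t\mathbf{M}(\partial\Om_t)\to 2V(\Si_0)$. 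Optimizing over $\Si_0\in\mS$ gives $W\le W_M$.

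For the lower bound and the structural information, Almgren-Pitts together with Schoen-Simon regularity produces a stationary integral varifold $V=\sum_i n_i\Si_i$ with mass $\sum n_iV(\Si_i)=W$, where each $\Si_i$ is a smooth closed embedded minimal hypersurface. Under $Ric_g>0$, Frankel's theorem forces any two such hypersurfaces to intersect, so the support is connected: $V=n\Si_0$ with $\Si_0\in\mS$. If $\Si_0$ is non-orientable, the multiplicity $n$ must be even: $V$ is the varifold limit of integer cycles $\partial\Om_{t_k}$, and locally the $n$ sheets collapsing onto $\Si_0$ carry outward normals of the $\Om_{t_k}$, which upon traversing an orientation-reversing loop in $\Si_0$ must cancel in equal numbers. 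Hence $W=nV(\Si_0)\ge V(\Si_0)\ge W_M$ in the orientable case and $W=nV(\Si_0)\ge 2V(\Si_0)\ge W_M$ in the non-orientable case. Combined with the upper bound, $W=W_M$ and each inequality is an equality, forcing $n=1$, $V(\Si_0)=W_M$ in case $(i)$, and $n=2$, $2V(\Si_0)=W_M$ in case $(ii)$.

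Finally, for the Morse index in case $(i)$: the bound $\text{index}(\Si_0)\ge 1$ follows from instability under $Ric_g>0$. The reverse bound $\text{index}(\Si_0)\le 1$, which is the main technical obstacle, should follow from a Marques-Neves-type argument (cf.\ \cite{MN}): a two-dimensional negative eigenspace of the Jacobi operator would furnish two independent area-decreasing deformations that, suitably grafted onto the optimal sweepout near $\Si_0$, would produce a competing sweepout with $\sup_t\mathbf{M}(\partial\Om_t)<V(\Si_0)=W_M$, contradicting $W=W_M$. Executing this argument rigorously in the Almgren-Pitts cycle-space framework for all $2\le n\le 6$ — in particular ensuring that the deformed family remains in the homotopy class of $[M]$ and that no concentration pathology degrades the bound — is the crux of the proof.
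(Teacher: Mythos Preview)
Your overall architecture matches the paper's: bound $W\le W_M$ by building good sweepouts from each $\Si_0\in\mS$, invoke Almgren--Pitts + Schoen--Simon + Frankel to get $W=kV(\Si_0)$ for a single connected $\Si_0$, force $k$ even when $\Si_0$ is non-orientable, and then run a Marques--Neves deformation to cap the index. Two steps, however, are not as immediate as your write-up suggests, and they are precisely where the paper does the real work.

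\textbf{The sweepout construction.} The first Jacobi eigenfunction only gives a \emph{local} area-decreasing foliation $\{\Si_s\}_{|s|<\ep}$ near $\Si_0$; it cannot be ``integrated on each side'' to a global sweepout, and there is no reason the flow even extends beyond a tubular neighborhood. What you need is that each complementary region $M_{1,\ep}$ (with boundary $\Si_\ep$ of strictly positive mean curvature) can itself be swept out with slice-area strictly below $V(\Si_0)$. The paper obtains this by a second min-max: if the width of $(M_{1,\ep},\partial M_{1,\ep})$ exceeded $V(\partial M_{1,\ep})$, the manifold-with-convex-boundary min-max (Theorem~\ref{min-max existence}) would produce a closed minimal hypersurface in the interior of $M_{1,\ep}$, disjoint from $\Si_0$, contradicting Frankel. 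Hence the width is at most $V(\Si_\ep)<V(\Si_0)$, yielding the desired extension (Proposition~\ref{existence of good sweepout}). The non-orientable case is handled by passing to the orientable double cover (Proposition~\ref{double cover orientation}, Proposition~\ref{existence of good sweepout2}), which is essentially your tubular-neighborhood picture but with the same extension issue to address. You should also note that showing all these sweepouts lie in the homotopy class of $[M]$ in the Almgren--Pitts sense is itself a substantial discretization argument (Section~\ref{discretization}, especially Theorem~\ref{identification of (1, M) homotopy class}).

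\textbf{Even multiplicity.} Your heuristic about outward normals cancelling along an orientation-reversing loop is not a proof: the min-max varifold is only a varifold limit of cycles, and those cycles need not be graphical, sheeted, or have any usable first-variation control near $\Si_0$. The paper's argument (Proposition~\ref{orientation and multiplicity result}) uses the almost-minimizing property to replace $V$ locally by a varifold that \emph{coincides} with $V$ but is approximated by cycles which are locally mass-minimizing, hence smooth stable, hence with bounded first variation; only then can one invoke White's theorem \cite{W} that varifold limit $=$ current limit $+\,2W$. Since the current limit restricted to a non-orientable component must vanish, the multiplicity is even. Your intuition is the right one, but the bridge from ``limit of cycles'' to White's hypothesis is the almost-minimizing/replacement machinery, and you should flag it.

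By contrast, the index-one step you call ``the crux'' is, once the above is in place, the most routine part of the paper's argument: with the optimal sweepout already built from $\Si_0$, a second negative eigenfunction orthogonal to the first gives (via the second variation formula) a strict decrease of the maximal slice area under an ambient isotopy, and the discretization machinery already developed puts the deformed sweepout back into $\Pi_M$.
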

\begin{remark}
In case $(ii)$, $\Si$ has the least area among all $\mS$. The illustrative examples for the first case are the equators $S^{n}$ embedded in $S^{n+1}$, and for the second case are the $\R\mathbb{P}^{n}$'s embedded in $\R\mathbb{P}^{n+1}$ when $n$ is an even number. Our theorem says that those are the only possible pictures.
\end{remark}

If there is no non-orientable embedded minimal hypersurface in $M$, we have the following interesting corollary.
\begin{theorem}\label{main corollary1}
Given $(M^{n+1}, g)$ as above, if $(M, g)$ has no non-orientable embedded minimal hypersurface, there is an orientable embedded minimal hypersurface $\Si^{n}\subset M^{n+1}$ with Morse index one.
\end{theorem}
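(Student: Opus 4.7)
The plan is to derive this as an immediate corollary of Theorem \ref{main theorem1}. That theorem produces the min-max minimal hypersurface $\Si$ corresponding to the fundamental class $[M]$ and asserts one of two mutually exclusive conclusions: $(i)$ $\Si$ is orientable, has multiplicity one, has Morse index one, and satisfies $V(\Si)=W_{M}$; or $(ii)$ $\Si$ is non-orientable, has multiplicity two, and satisfies $2V(\Si)=W_{M}$. The argument I would give is: under the hypothesis that $(M,g)$ admits no non-orientable embedded minimal hypersurface, alternative $(ii)$ is impossible, because the min-max hypersurface produced is by construction a connected, closed, smoothly embedded minimal hypersurface — that is, an element of $\mS$. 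Consequently alternative $(i)$ must hold, and $\Si$ is the desired orientable embedded minimal hypersurface of Morse index one.

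Since the deduction is essentially one step, there is no serious obstacle. The only thing I would explicitly check is that a non-orientable $\Si$ arising in case $(ii)$ would indeed qualify as a non-orientable embedded minimal hypersurface in the sense excluded by the hypothesis, and this is precisely how Theorem \ref{main theorem1} phrases case $(ii)$ (the underlying support of the varifold produced by min-max is a smooth, connected, closed, embedded hypersurface, non-orientable, appearing with multiplicity two). All the difficulty of the result is absorbed into the main theorem, so the present statement requires no additional geometric input beyond this logical elimination.
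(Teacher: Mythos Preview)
Your proposal is correct and matches the paper's approach: the paper states Theorem \ref{main corollary1} explicitly as an immediate corollary of Theorem \ref{main theorem1}, and the one-line deduction you give (the hypothesis rules out alternative $(ii)$, so alternative $(i)$ holds) is exactly what is intended.
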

\begin{remark}
If $M$ is simply connected, i.e. $\pi_{1}(M)=0$, then by  \cite[Chap 4, Theorem 4.7]{H}, there is no non-orientable embedded hypersurface in $M$. If $\pi_{1}(M)$ is finite, and the cardinality $\#\big(\pi_{1}(M)\big)$ is an odd number, then $M$ has no non-orientable embedded minimal hypersurface by looking at the universal cover.
\end{remark}

As a by-product of the proof, we have the second interesting corollary.
\begin{theorem}\label{main corollary2}
In the case of Theorem \ref{main corollary1}, the hypersurface $\Si^{n}\subset M^{n+1}$ has least area among all closed embedded minimal hypersufaces in $M^{n+1}$.
\end{theorem}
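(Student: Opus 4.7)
The plan is to derive Theorem \ref{main corollary2} as an essentially immediate consequence of Theorem \ref{main theorem1} combined with the hypothesis of Theorem \ref{main corollary1} excluding non-orientable minimal hypersurfaces.

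First, I would apply Theorem \ref{main theorem1} to the min-max minimal hypersurface $\Si$ corresponding to the fundamental class $[M]$. Under the hypothesis of Theorem \ref{main corollary1}, the family $\mS$ contains no non-orientable element, so alternative $(ii)$ of Theorem \ref{main theorem1}---which would output a non-orientable $\Si$ of multiplicity two---cannot occur. We are therefore necessarily in alternative $(i)$: $\Si$ is orientable of multiplicity one, has Morse index one, and satisfies $V(\Si)=W_M$. This $\Si$ is precisely the hypersurface produced by Theorem \ref{main corollary1}.

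Next, I would unfold the definition of $W_M$ in \eqref{minimal volume of embedded minimal hypersurfaces}. Since by hypothesis every element of $\mS$ is orientable, the non-orientable branch of the minimum in \eqref{minimal volume of embedded minimal hypersurfaces} is vacuous, and one obtains
\begin{equation*}
W_M=\min_{\Si'\in\mS}V(\Si'),
\end{equation*}
which by definition is the least volume attained by any closed embedded minimal hypersurface in $M$. Combined with the equality $V(\Si)=W_M$ from the previous step, this yields the desired minimality of $V(\Si)$ among all hypersurfaces in $\mS$.

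There is essentially no technical obstacle in this proof: all the substantive work---ruling out higher multiplicity, bounding the Morse index, and identifying the area of the min-max output with $W_M$---has already been absorbed into Theorem \ref{main theorem1}. The only consistency point to record is that the same hypothesis (absence of non-orientable closed embedded minimal hypersurfaces) is used twice, once to exclude case $(ii)$ of Theorem \ref{main theorem1} and once to collapse the definition of $W_M$ in \eqref{minimal volume of embedded minimal hypersurfaces} to a single infimum over orientable minimal hypersurfaces; both uses are immediate.
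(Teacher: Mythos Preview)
Your argument is correct and matches the paper's approach: the paper presents Theorem \ref{main corollary2} as a ``by-product of the proof'' of Theorem \ref{main theorem1} without a separate written proof, and your derivation---excluding case $(ii)$ via the non-orientability hypothesis and then collapsing the definition of $W_M$---is exactly the intended reasoning.
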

\begin{remark}
In general, compactness of  stable minimal hypersurfaces follows from curvature estimates \cite{SSY}\cite{SS}, which would imply the existence of a least area guy among the class of stable minimal hypersurfaces or even minimal hypersurfaces with uniform Morse index bound. However, the class of all closed embedded minimal hypersurfaces in $M$ does not have a priori Morse index bound. In fact, the existence of the least area minimal hypersurface comes from the min-max theory and the special structure of orientable minimal hypersurface in manifold $(M^{n+1}, g)$ with $Ric_{g}>0$.
\end{remark}

\vspace{5pt}
The main idea is as follows. The difficulty to get those geometric information is due to the fact that the min-max hypersurface is a very weak limit (varifold limit) in the construction. To overcome this difficulty, we try to get an optimal minimal hypersurface, which lies in a ``mountain pass" (see \cite{St}) type sweepout (continuous family of hypersurfaces, see Definition \ref{definition of sweepout}) in this min-max construction. Given $(M^{n+1}, g)$ as in Theorem \ref{main theorem1}, we will first embed any closed embedded minimal hypersurface $\Si$ into a good sweepout. Then such families are discretized to be adapted to the Almgren-Pitts theory. We will show that all those families lie in the same homotopy class corresponding to the fundamental class of $M$. The Almgren-Pitts theory applies to this homotopy class to produce an optimal embedded minimal hypersurface, for which we can characterize the Morse index, volume and multiplicity. There are two reasons that we must use the Almgren-Pitts theory rather than other min-max theory in continuous setting \cite{CD}\cite{DT}. One is due to the fact that the sweepouts generated by non-orientable minimal hypersurfaces (Proposition \ref{existence of good sweepout2}) do not satisfy the requirements in the continuous setting; the other reason is that only in the Almgren-Pitts setting could we show that all sweepouts lie in the same homotopy class. 

The paper is organized as follows. In Section \ref{min-max theory 1}, we give a min-max theory for manifold with boundary using the continuous setting as in \cite{DT}. In Section \ref{generating sweepout}, we show that good sweepouts can be generalized from embedded minimal hypersurface, where orientable and non-orientable cases are discussed separately. In Section \ref{min-max theory 2}, we introduce the celebrated Almgren-Pitts theory \cite{A2}\cite{P}\cite{SS}, especially the case of one parameter sweepouts. In Section \ref{discretization}, sweepouts which are continuous in the flat topology are made to discretized families in the mass norm topology as those used in the Almgren-Pitts theory. In Section \ref{orientation and multiplicity}, we give a characterization of the orientation and multiplicity of the min-max hypersurface. Finally, we prove the main result in Section \ref{main result}.

\vspace{5pt}
{\renewcommand\baselinestretch{1.0}\selectfont
\noindent\textbf{Acknowledgement:} {\small I would like to express my gratitude to my advisor Richard Schoen for lots of enlightening discussions and constant encouragement. I would also like to thank Andre Neves for pointing out an error in the first version and comments. Thanks to Brian white and Alessandro Carlotto for discussions and comments. Finally thanks to Simon Brendle, Tobias Colding, Fernando Marques, Rafe Mazzeo, William Minicozzi, Leon Simon and Gang Tian for their interests on this work.
}
\par}


\section{Min-max theory \uppercase\expandafter{\romannumeral1}---continuous setting}\label{min-max theory 1}

Let us first introduce a continuous setting for the min-max theory for constructing minimal hypersurfaces. In fact, Almgren and Pitts \cite{A2}\cite{P} used a discretized setting. They can deal with very generally discretized multi-parameter family of surfaces, but due to the discretized setting, the multi-parameter family is hard to apply to geometry directly. Later on, Smith \cite{Sm} introduced a setting using continuous families in $S^{3}$. Recently, Colding, De Lellis \cite{CD} ($n=2$) and De Lellis, Tasnady \cite{DT} ($n\geq 2$) gave a version of min-max theory using continuous category based on the ideas in \cite{Sm}. They mainly dealt with the family of level surfaces of a Morse function. Their setting is more suitable for geometric manipulation. Marques and Neves \cite{MN} extended \cite{CD} to a setting for manifolds with fixed convex boundary when $n=2$. They used that to construct smooth sweepout by Heegaard surface in certain three manifolds. In this section we will mainly use the version by De Lellis and Tasnady \cite{DT}. We will extend Marques and Neves' min-max construction for manifolds with fixed convex boundary to high dimensions. 

Let $(M^{n+1}, g)$ be a Riemannian manifold with or without boundary $\partial M$. $\mH^{n}$ denotes the $n$ dimensional Hausdorff measure. When $\Si^{n}$ is a $n$-dimensional submanifold, we use $V(\Si)$ to denote $\mH^{n}(\Si)$.
\begin{definition}\label{definition of sweepout}
A family of $\mH^{n}$ measurable closed subsets $\{\Ga_{t}\}_{t\in[0, 1]^{k}}$\footnote{The parameter space $[0, 1]$ can be any other interval $[a, b]$ in $\R$.} of $M$ with finite $\mH^{n}$ measure is called \emph{a generalized smooth family of hypersurfaces} if
\begin{itemize}
\vspace{-5pt}
\setlength{\itemindent}{1em}
\addtolength{\itemsep}{-0.7em}
\item[(s1)] For each $t$, there is a finite subset $P_{t}\subset M$, such that $\Ga_{t}$ is a smooth hypersurface in $M\setminus P_{t}$;
\item[(s2)] $t\rightarrow \mH^{n}(\Ga_{t})$ is continuous, and $t\rightarrow \Ga_{t}$ is continuous in the Hausdorff topology;
\item[(s3)] $\Ga_{t}\rightarrow \Ga_{t_{0}}$ smoothly in any compact $U\subset\subset M\setminus P_{t_{0}}$ as $t\rightarrow t_{0}$;
\vspace{-5pt}
\end{itemize}
When $\partial M=\emptyset$, a generalized smooth family $\{\Si_{t}\}_{t\in[0, 1]}$ is called a \emph{sweepout} of $M$ if there exists a family of open sets $\{\Om_{t}\}_{t\in[0, 1]}$, such that
\begin{itemize}
\vspace{-5pt}
\setlength{\itemindent}{1.5em}
\addtolength{\itemsep}{-0.7em}
\item[(sw1)] $(\Si_{t}\setminus \partial\Om_{t})\subset P_{t}$, for any $t\in[0, 1]$;
\item[(sw2)] $\textrm{Volume}(\Om_{t}\setminus\Om_{s})+\textrm{Volume}(\Om_{s}\setminus\Om_{t})\rightarrow 0$, as $s\rightarrow t$;
\item[(sw3)] $\Om_{0}=\emptyset$, and $\Om_{1}=M$.
\vspace{-5pt}
\end{itemize}
When $\partial M\neq\emptyset$, a sweepout is required to satisfy all the above except with $(sw3)$ changed by
\begin{itemize}
\vspace{-5pt}
\setlength{\itemindent}{1.5em}
\addtolength{\itemsep}{-0.7em}
\item[(sw3')] $\Om_{1}=M$. $\Si_{0}=\partial M$, $\Si_{t}\subset int(M)$ for $t>0$, and $\{\Si_{t}\}_{0\leq t\leq \ep}$ is a smooth foliation of a neighborhood of $\partial M$ for some small $\ep>0$, i.e. there exists a smooth function $w:[0, \ep]\times\partial M\rightarrow \R$, with $w(0, x)=0$ and $\frac{\partial}{\partial t}w(0, x)>0$, such that
$$\Si_{t}=\{exp_{x}\big(w(t, x)\nu(x)\big):\ x\in\partial M\},\quad \textrm{ for }t\in[0, \ep],$$
where $\nu$ is the inward unit normal for $(M, \partial M)$.
\vspace{-5pt}
\end{itemize}
\end{definition}
\begin{remark}
The first part of the definition follows from \cite[Definition 0.2]{DT}, while the second part borrows idea from \cite{MN}.
\end{remark}

We will need the following two basic results.
\begin{proposition}\label{level sets of morse function}
\emph{(\cite[Proposition 0.4]{DT})} Assume $\partial M=\emptyset$. Let $f: M\rightarrow[0, 1]$ be a smooth Morse function. Then the level sets $\big\{\{f=t\}\big\}_{t\in[0, 1]}$ is a sweepout.
\end{proposition}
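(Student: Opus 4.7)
The plan is to produce the sweepout explicitly by taking $\Gamma_t=\{f=t\}$ and $\Omega_t=\{f<t\}$, normalizing so that $\min f=0$ and $\max f=1$, and then checking each of the six conditions (s1)--(s3), (sw1)--(sw3) in turn. The singular set $P_t$ is chosen to be $\mathrm{Crit}(f)\cap\{f=t\}$, which is finite for every $t$ and empty for all but finitely many $t$ because $f$ is Morse on a closed manifold.

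First, away from $P_t$ the implicit function theorem applied to $f$ immediately gives (s1), i.e.\ $\Gamma_t$ is a smooth embedded hypersurface in $M\setminus P_t$. The same IFT argument, combined with the fact that on any compact $U\subset\subset M\setminus P_{t_0}$ the gradient $|\nabla f|$ is bounded uniformly from below on a neighborhood of $\{f=t_0\}\cap U$, yields (s3): for $t$ close to $t_0$ the level set $\Gamma_t$ is the normal graph of a smooth function over $\Gamma_{t_0}\cap U$ whose $C^k$ norm tends to zero as $t\to t_0$. Hausdorff continuity in (s2) follows from uniform continuity of $f$ on $M$.

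The area continuity $t\mapsto\mathcal{H}^n(\Gamma_t)$ is the one point that genuinely requires work, and I regard it as the main obstacle. For $t$ a regular value it follows smoothly from (s3) and the first variation formula, or equivalently from the coarea formula $\mathcal{H}^n(\Gamma_t)=\frac{d}{dt}\int_{\{f<t\}}|\nabla f|\,dV$ applied on an interval of regular values. To handle a critical value $t_0$, I would work in Morse coordinates near each critical point $p\in P_{t_0}$, where $f=f(p)\pm x_1^2\pm\cdots\pm x_{n+1}^2$, and show by direct computation in the local quadric that the piece of $\Gamma_t$ inside a small ball $B_r(p)$ has $\mathcal{H}^n$-measure bounded by $Cr^n$ uniformly in $t$, and that this contribution is continuous in $t$. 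Outside the union of such small balls the argument from the regular-value case applies, and letting $r\to 0$ gives continuity at $t_0$.

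Finally, (sw1) reduces to a pointwise observation: if $f(x)=t$ and $\nabla f(x)\ne 0$ then $x$ is a boundary point of $\{f<t\}$, so any $x\in\Gamma_t\setminus\partial\Omega_t$ must lie in $P_t$. Condition (sw2) is a dominated convergence statement: the symmetric difference $\Omega_t\triangle\Omega_s$ is contained in $\{|f-t|<|s-t|\}$ whose measure tends to $0$ as $s\to t$ by continuity of $f$ and finiteness of $\mathrm{Vol}(M)$. For (sw3) the normalization $\min f=0$, $\max f=1$ forces $\Omega_0=\emptyset$, while $\Omega_1=\{f<1\}$ differs from $M$ only by the finite set of global maxima, which is negligible for the purposes of (sw2) and can be absorbed by a trivial one-sided limit; alternatively one slightly perturbs the interval to $[-\varepsilon,1+\varepsilon]$ so that the endpoints are not attained. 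The construction meets all requirements of Definition \ref{definition of sweepout}.
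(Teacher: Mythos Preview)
The paper does not prove this proposition itself; it is quoted directly from \cite[Proposition~0.4]{DT}. Your sketch reproduces the standard argument found there: take $\Gamma_t=\{f=t\}$, $\Omega_t=\{f<t\}$, $P_t=\mathrm{Crit}(f)\cap\Gamma_t$, and verify the axioms one by one, with the Morse lemma handling the one genuinely nontrivial point (continuity of $t\mapsto\mathcal{H}^n(\Gamma_t)$ across critical levels). Your treatment of (sw3) and the endpoint issue is also the standard fix.

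One small caution: your justification of Hausdorff continuity in (s2) via ``uniform continuity of $f$ on $M$'' only delivers one half of what is needed. Compactness of $M$ plus continuity of $f$ gives $\Gamma_t\subset N_\epsilon(\Gamma_{t_0})$ for $t$ close to $t_0$ (any accumulation point of $x_n\in\Gamma_{t_n}$ lands in $\Gamma_{t_0}$). The reverse inclusion $\Gamma_{t_0}\subset N_\epsilon(\Gamma_t)$, however, can fail one-sidedly at a \emph{non-global} local extremum $p$ with $f(p)=t_0$: in Morse coordinates $p$ is an isolated point of $\Gamma_{t_0}$, while for $t$ on one side of $t_0$ the level set $\Gamma_t$ contains no point in a fixed ball around $p$. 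This is a known wrinkle in the literal statement and is immaterial for every application in this paper (isolated points carry no $\mathcal{H}^n$-mass and never enter the varifold or current arguments), but it is not resolved by uniform continuity alone; you would need either to drop isolated critical points from $\Gamma_t$ or to argue separately using the Morse normal form on both sides of $t_0$.
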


Given a generalized family $\{\Ga_{t}\}$, we set
$$\bL(\{\Ga_{t}\})=\max_{t}\mH^{n}(\Ga_{t}).$$
As a consequence of the isoperimetric inequality, we have,
\begin{proposition}\label{lower bound of level sets}
\emph{(\cite[Proposition 1.4]{CD} and \cite[Proposition 0.5]{DT})} Assume $\partial M=\emptyset$. There exists a positive constant $C(M)>0$ depending only on $M$, such that $\bL(\{\Si_{t}\})\geq C(M)$ for any sweepout $\{\Si_{t}\}_{t\in[0, 1]}$.
\end{proposition}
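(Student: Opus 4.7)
The plan is to combine the intermediate value theorem with the Riemannian isoperimetric inequality on the closed manifold $(M,g)$. Given any sweepout $\{\Si_t\}_{t\in[0,1]}$ with associated open sets $\{\Om_t\}$, I first observe that the volume function $v(t):=\mathrm{Vol}(\Om_t)$ is continuous on $[0,1]$, since by $(sw2)$
\[|v(t)-v(s)|\le \mathrm{Vol}(\Om_t\setminus\Om_s)+\mathrm{Vol}(\Om_s\setminus\Om_t)\longrightarrow 0.\]
By $(sw3)$ we have $v(0)=0$ and $v(1)=\mathrm{Vol}(M)$, so the intermediate value theorem yields some $t_0\in(0,1)$ with $v(t_0)=\tfrac12\mathrm{Vol}(M)$.

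Next I would show that $\Om_{t_0}$ is a set of finite perimeter whose perimeter is bounded above by $\mH^n(\Si_{t_0})$. By $(s1)$, $\Si_{t_0}$ is a smooth hypersurface off the finite set $P_{t_0}$ and has finite $\mH^n$ measure; by $(sw1)$, $\Si_{t_0}\setminus\partial\Om_{t_0}\subset P_{t_0}$, so the topological boundary of $\Om_{t_0}$ coincides with $\Si_{t_0}$ modulo the set $P_{t_0}$, which is $\mH^n$-negligible because it is a finite set (Hausdorff dimension $0<n$). Hence the reduced boundary $\partial^*\Om_{t_0}$ is $\mH^n$-a.e.\ contained in $\Si_{t_0}$, and $\mathrm{Per}(\Om_{t_0})\le\mH^n(\Si_{t_0})$.

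Finally, I invoke the isoperimetric inequality on the closed Riemannian manifold $(M,g)$: the isoperimetric profile $v\mapsto \inf\{\mathrm{Per}(E):\mathrm{Vol}(E)=v\}$ is continuous and strictly positive on $(0,\mathrm{Vol}(M))$, so setting $C(M):=\inf\{\mathrm{Per}(E):\mathrm{Vol}(E)=\tfrac12\mathrm{Vol}(M)\}>0$, one obtains a constant depending only on $(M,g)$ with the property that every Caccioppoli set of half the total volume has perimeter at least $C(M)$. Applying this to $E=\Om_{t_0}$ and chaining with the previous paragraph gives $\mH^n(\Si_{t_0})\ge\mathrm{Per}(\Om_{t_0})\ge C(M)$, whence $\bL(\{\Si_t\})=\max_t\mH^n(\Si_t)\ge C(M)$.

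The main obstacle is the middle step: one must carefully justify that $\Om_{t_0}$ really is a Caccioppoli set with reduced boundary essentially inside $\Si_{t_0}$, working only from the axioms $(s1)$--$(s3)$ and $(sw1)$--$(sw3)$ for a generalized family. The only slack is the finite singular locus $P_{t_0}$, which however has vanishing $n$-dimensional Hausdorff measure, so it cannot spoil the perimeter bound; the smooth portion of $\Si_{t_0}$ provides the rectifiable structure needed to majorize the perimeter of $\Om_{t_0}$. Once this is in hand, the rest of the argument is purely variational and requires nothing beyond the classical isoperimetric profile of a compact Riemannian manifold.
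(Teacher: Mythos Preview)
Your argument is correct and follows exactly the approach indicated in the paper, which does not give its own proof but cites \cite[Proposition~1.4]{CD} and \cite[Proposition~0.5]{DT} and notes that the result is ``a consequence of the isoperimetric inequality.'' The proof in those references proceeds precisely as you outline: continuity of $t\mapsto\mathrm{Vol}(\Om_t)$ from $(sw2)$, the intermediate value theorem to find a half-volume slice, and the isoperimetric profile of $M$ to bound $\mH^n(\Si_{t_0})$ from below.
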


We need the following notion of homotopy equivalence.
\begin{definition}
When $\partial M=\emptyset$, two sweepouts $\{\Si^{1}_{t}\}_{t\in[0, 1]}$ and $\{\Si^{2}_{t}\}_{t\in[0 ,1]}$ are \emph{homotopic} if there is a generalized smooth family $\{\Ga_{(s, t)}\}_{(s, t)\in[0, 1]^{2}}$, such that $\Ga_{(0, t)}=\Si^{1}_{t}$ and $\Ga_{(1, t)}=\Si^{2}_{t}$. When $\partial M\neq\emptyset$, we further require the following condition:
\begin{itemize}
\setlength{\itemindent}{0em}
\addtolength{\itemsep}{-0.7em}
\item[$(*)$] $\Ga_{(s, 0)}\equiv\partial M$, $\Ga_{(s, t)}\subset int(M)$ for $t>0$, and for some small $\ep>0$, there exits a smooth function $w:[0, \ep]\times[0, \ep]\times\partial M\rightarrow \R$, with $w(s, 0, x)=0$ and $\frac{\partial}{\partial t}w(s, 0, x)>0$, such that
$$\Ga_{(s, t)}=\{exp_{x}\big(w(s, t, x)\nu(x)\big):\ x\in\partial M\},\quad \textrm{ for } (s, t)\in[0, \ep]\times[0, \ep].$$
\end{itemize}
A family $\La$ of sweepouts is called \emph{homotopically closed} if it contains the homotopy class of each of its elements.
\end{definition}
\begin{remark}\label{remark about sweepout}
Denote $\textrm{Diff}_{0}(M)$ to be the isotopy group of diffeomorphisms of $M$. When $\partial M\neq\emptyset$, we require the isotopies to leave a neighborhood of $\partial M$ fixed. Given a sweepout $\{\Si_{t}\}_{t\in[0, 1]}$, and $\psi\in C^{\infty}([0, 1]\times M, M)$ with $\psi(t)\in \textrm{Diff}_{0}(M)$ for all $t$, then $\{\psi(t, \Si_{t})\}_{t\in[0, 1]}$ is also a sweepout, which is homotopic to $\{\Si_{t}\}$. Such homotopies will be called homotopies induced by ambient isotopies. 
\end{remark}

Given a homotopically closed family $\La$ of sweepouts, the \emph{width of $M$ associated with $\La$} is defined as,
\begin{equation}
W(M, \partial M, \La)=\inf_{\{\Si_{t}\}\in \La}\bL\big(\{\Si_{t}\}\big).
\end{equation}
When $\partial M=\emptyset$, we omit $\partial M$ and write the width as $W(M, \La)$. In case $\partial M=\emptyset$, as a corollary of Proposition \ref{lower bound of level sets}, the width of $M$ is always nontrivial, i.e. $W(M, \La)\geq C(M)>0$.

A sequence $\big\{\{\Si^{n}_{t}\}_{t\in[0, 1]}\big\}_{n=1}^{\infty}\subset \La$ of sweepouts is called a \emph{minimizing sequence} if $\F\big(\{\Si^{n}_{t}\}\big)\searrow W(M, \partial M, \La)$. A sequence of slices $\{\Si^{n}_{t_{n}}\}$ with $t_{n}\in[0, 1]$ is called a \emph{min-max sequence} if $\mH^{n}(\Si^{n}_{t_{n}})\rightarrow W(M, \partial M, \La)$. The motivation in the min-max theory \cite{P}\cite{SS}\cite{CD}\cite{DT} is to find a regular minimal hypersurface as a min-max limit corresponding to the width $W(M, \partial M, \La)$.

If $\partial M\neq \emptyset$ and $\nu$ is the inward unit normal for $(M, \partial M)$, we denote the mean curvature of the boundary by $H(\partial M)$, and the mean curvature vector by $H(\partial M)\nu$. Here the sign convention for $H$ is that $H(\partial M)(p)=-\sum_{i=1}^{n}\lan\nabla_{e_{i}}\nu, e_{i}\ran$, where $\{e_{1}, \cdots, e_{n}\}$ is an local orthonormal basis at $p\in\partial M$. Based on the main results in \cite{DT} and an idea in \cite{MN}, we have the following main result for this section.
\begin{theorem}\label{min-max existence}
Let $(M^{n+1}, g)$ be a connected compact Riemannian manifold with or without boundary $\partial M$ and $2\leq n\leq 6$. When $\partial M\neq \emptyset$, we assume $H(\partial M)>0$. For any homologically closed family $\La$ of sweepouts, with $W(M, \partial M, \La)>V(\partial M)$ if $\partial M\neq \emptyset$, there exists a min-max sequence $\{\Si^{n}_{t_{n}}\}$ of $\La$ that converges in the varifold sense to an embedded minimal hypersurface $\Si$ (possibly disconnected), which lies in the interior of $M$ if $\partial M\neq \emptyset$. Furthermore, the width $W(M, \partial M, \La)$ is equal to the volume of $\Si$ if counted with multiplicities.
\end{theorem}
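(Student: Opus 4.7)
The strategy is to adapt the continuous-setting min-max theory of De Lellis--Tasn\'ady \cite{DT} to the manifold-with-boundary case, following the template that Marques--Neves introduced in dimension three \cite{MN}. When $\partial M=\emptyset$ the statement is essentially \cite[Theorem 0.7]{DT}, so the main work lies in the boundary case. There the hypotheses $H(\partial M)>0$ and $W(M,\partial M,\La)>V(\partial M)$ enter only at the very end, in order to force the min-max hypersurface into $\mathrm{int}(M)$, after which the interior regularity theory of Pitts and Schoen--Simon \cite{P,SS} applies directly.

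First I would perform a \emph{pull-tight} construction. Given a minimizing sequence $\{\Si^n_t\}\subset\La$, I compose each slice with a carefully chosen one-parameter family of ambient diffeomorphisms in $\mathrm{Diff}_0(M)$, required to be the identity in a fixed neighborhood of $\partial M$ (as in the homotopy notion of Remark \ref{remark about sweepout}), to obtain a new minimizing sequence in the same homotopy class with the property that every subsequential varifold limit $V$ of min-max slices $\Si^n_{t_n}$, $\mH^n(\Si^n_{t_n})\to W(M,\partial M,\La)$, is stationary in $\mathrm{int}(M)$. This is the direct analogue of \cite[Proposition 4.1]{CD} and the pull-tight step in \cite[Section 2]{DT}; the only modification for the boundary case is the support condition on the generating vector fields, which is harmless.

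Next I would prove existence of almost-minimizing slices. A combinatorial argument of Almgren--Pitts type, carried out in the continuous setting as in \cite[Section 7]{CD} and \cite[Section 4]{DT}, produces, for each $\ep>0$, a min-max slice $\Si^n_{t_n}$ which is $\ep$-almost minimizing in sufficiently small annuli around every interior point. Letting $\ep\to 0$ and passing to a subsequential varifold limit yields a stationary integral varifold $V$ with $\|V\|=W(M,\partial M,\La)$ enjoying the replacement property in small annuli. The Schoen--Simon regularity theorem \cite{SS}, valid for $n\leq 6$, then identifies $\mathrm{supp}(V)\cap\mathrm{int}(M)$ with a smooth embedded minimal hypersurface $\Si$ (possibly disconnected) counted with integer multiplicities, and $W(M,\partial M,\La)$ equals the total volume of $\Si$ counted with multiplicity.

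The main obstacle, and the essentially new point beyond the closed case, is to verify that $\mathrm{supp}(V)\subset\mathrm{int}(M)$. Since $H(\partial M)>0$, a tubular neighborhood of $\partial M$ is foliated by smooth strictly mean-convex hypersurfaces, and the strong maximum principle for stationary varifolds against a mean-convex barrier (in the form due to White) iterated along this foliation implies that $\mathrm{supp}(V)$ has positive distance from $\partial M$, unless $V$ carries $\partial M$ as a component of its support with some integer multiplicity $m\geq 1$. The hypothesis $W(M,\partial M,\La)>V(\partial M)$ together with the foliation condition (sw3$'$) near $\partial M$ excludes this last possibility: a slight inward deformation of the collar of slices near $t=0$, made possible by the strict mean-convexity, yields a competitor sweepout in $\La$ whose maximum slice area can be made arbitrarily close to $\max\{V(\partial M),W(M,\partial M,\La)-mV(\partial M)\}$, contradicting the minimizing property. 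Hence $\Si\subset\mathrm{int}(M)$, and the volume identity $W(M,\partial M,\La)=\sum_i m_i V(\Si_i)$ completes the proof.
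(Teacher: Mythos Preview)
Your closed-manifold paragraph and the pull-tight/almost-minimizing/regularity outline are fine; the issue is the order in which you invoke the mean-convexity of $\partial M$. The paper (following \cite[Theorem~2.1]{MN}) uses it \emph{before} pull-tight: one flows an arbitrary minimizing sequence by the collar vector field $\varphi(r)\,\partial_r$, and strict mean-convexity makes this flow area-decreasing on any slice that meets the collar. Combined with $W(M,\partial M,\La)>V(\partial M)$, this produces a new minimizing sequence in which every slice with $\mH^{n}(\Si^{n}_{t})\geq W-\de$ already satisfies $d(\Si^{n}_{t},\partial M)\geq a/2$. After that, pull-tight, the combinatorial almost-minimizing argument, and the regularity theory of \cite{DT} can be quoted verbatim with all deformation vector fields vanishing near $\partial M$, and the limit varifold is automatically supported in a fixed compact subset of $\mathrm{int}(M)$.

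You instead run pull-tight first with vector fields vanishing near $\partial M$, obtain a varifold $V$ that is stationary only in $\mathrm{int}(M)$, and then try to push $\mathrm{supp}(V)$ off $\partial M$ by the maximum principle. This has a genuine gap. The White/Solomon--White maximum principle requires $V$ to be stationary in a neighborhood of the contact point; if $\inf_{\mathrm{supp}(V)}d(\cdot,\partial M)=0$ the contact point lies on $\partial M$, where you have no first-variation information, so the barrier argument does not apply. (When the infimum is positive the argument does give a contradiction, but that is exactly the case with nothing to prove.) The dichotomy ``positive distance or $V$ carries $m[\partial M]$'' is also not correct: $\mathrm{supp}(V)\cap\mathrm{int}(M)$ could be a smooth minimal hypersurface whose closure meets $\partial M$ in a lower-dimensional set without any multiple of $\partial M$ sitting inside $V$. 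Finally, your competitor argument---deforming only the slices $\Si^{n}_{t}$ for $t$ near $0$---does not reduce the area of the actual min-max slices $\Si^{n}_{t_{n}}$, so it does not yield a sweepout in $\La$ with maximal area below $W$. The fix is simply to reorder: apply the inward collar flow to the minimizing sequence first, as in \cite{MN}, and then run the interior machinery of \cite{DT}.
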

\begin{proof}
When $\partial M=\emptyset$, it is just Theorem 0.7 in \cite{DT}.

Now let us assume $\partial M\neq \emptyset$. The result follows from an observation of Theorem 2.1 in \cite{MN} and minor modifications of the arguments in \cite{DT}. Here we will state the main steps and point out the key points on how to modify arguments in \cite{DT} to our setting.

\noindent\underline{\textbf{Part 1}}: Since $H(\partial M)>0$, by almost the same argument as in the proof of \cite[Theorem 2.1]{MN}, we can find $a>0$, and a minimizing sequence of sweepouts $\big\{\{\Si^{n}_{t}\}_{t\in[0, 1]}\big\}$, such that
\begin{equation}\label{inward deformation}
\mH^{n}(\Si^{n}_{t})\geq W(M, \partial M, \La)-\de,\ \Longrightarrow\ d(\Si^{n}_{t}, \partial M)\geq a/2,
\end{equation}
where $\de=\frac{1}{4}\big(W(M, \partial M, \La)-V(\partial M)\big)>0$, and $d(\cdot, \cdot)$ is the distance function of $(M, g)$.

Let us discuss the minor difference between our situation and those in \cite{MN}. Write a neighborhood of $\partial M$ using normal coordinates $[0, 2a]\times \partial M$ for some $a>0$, such that the metric $g=dr^{2}+g_{r}$. In \cite{MN} they deform an arbitrary minimizing sequence to satisfy (\ref{inward deformation}) by ambient isotopies induced by an vector field $\varphi(r)\frac{\partial}{\partial r}$, where $\varphi(r)$ is a cutoff function supported in $[0, 2a]$. Although the argument in \cite[Theorem 2.1]{MN} was given in dimension 2, it works in all dimension. The only difference is that in the proof of the claim on \cite[page 5]{MN}, we need to take the orthonormal basis $\{e_{1}, \cdots, e_{n}\}$, such that $\{e_{1}, \cdots, e_{n-1}\}$ is orthogonal to $\frac{\partial}{\partial r}$, and then projects $e_{n}$ to the orthogonal compliment of $\frac{\partial}{\partial r}$. Then all the argument follows exactly the same as in \cite{MN}.

\noindent\underline{\textbf{Part 2}}: Now let us sketch the main steps for modifying arguments of the min-max construction in \cite{DT}\cite{CD} to our setting.

Given the minimizing sequence $\big\{\{\Si^{n}_{t}\}_{t\in[0, 1]}\big\}\subset\La$ as above. The first step is a tightening process as in \cite[\S 4]{CD}, where we deform each $\{\Si^{n}_{t}\}_{t\in[0, 1]}$ to another one $\{\ti{\Si}^{n}_{t}\}_{t\in[0, 1]}$ by ambient isotopy $\{F_{t}\}_{t\in[0, 1]}\subset\textrm{Diff}_{0}(M)$, i.e. $\{\ti{\Si}^{n}_{t}=F(t, \Si^{n}_{t})\}_{t\in[0, 1]}\subset\La$, such that every min-max sequence $\{\ti{\Si}^{n}_{t_{n}}\}$ converges to a stationary varifold. Since those $\Si^{n}_{t}$ with volume near $W(M, \partial M, \La)$ have a distance $a/2>0$ away from $\partial M$, we can take all the deformation vector field to be zero near $\partial M$ in \cite[\S 4]{CD}. Hence $\{\ti{\Si}^{n}_{t}\}$ can be choose to satisfy (\ref{inward deformation}) too.

The second step is to find an almost minimizing min-max sequence (see Definition 2.3 and Proposition 2.4 in \cite{DT}) $\{\ti{\Si}^{n}_{t_{n}}\}$ among $\{\ti{\Si}^{n}_{t}\}_{t\in[0, 1]}$, where $\ti{\Si}^{n}_{t_{n}}$ converge to a stationary varifold $V$. By (\ref{inward deformation}), the slices $\ti{\Si}^{n}_{t}$ with volume near $W(M, \partial M, \La)$ always have a distance $a/2>0$ away from $\partial M$, hence they are almost minimizing in any open set supported near $\partial M$. Away from $\partial M$, all the arguments in \cite[\S 3]{DT} work, hence it implies the existence of an almost minimizing sequence in the sense of \cite[Proposition 2.4]{DT}, which are supported away from $\partial M$.

The final step is to prove that the limiting stationary varifold $V$ of the almost minimizing sequence is supported on a smooth embedded minimal hypersurface. This step was done in \cite[\S 4 and \S 5]{DT}. The arguments are purely local. By our construction, the corresponding varifold measure $|V|$ on $M$ is supported away from $\partial M$, hence the regularity results in \cite{DT} are true in our case. Counting the dimension restriction $2\leq n\leq 6$, it implies the conclusion.
\end{proof}


\section{Min-max family from embedded minimal hypersurface}\label{generating sweepout}

In this section, by exploring some special structure for embedded minimal hypersurfaces in positive Ricci curvature manifold, we will show that every embedded close connected orientable minimal hypersurface can be embedded into a sweepout, and a double cover of every embedded closed connected non-orientable minimal hypersurface can be embedded into a sweepout in a double cover of the manifold. The sweepouts constructed in both cases can be chosen to be a level surfaces of a Morse function, which hence represent the fundamental class of the ambient manifold (see Theorem \ref{identification of (1, M) homotopy class}). We first collect some results on differentiable topology. 

\begin{theorem}\label{boundary orientable}
\emph{(\cite[Chap 4, Lemma 4.1 and Theorem 4.2]{H})} Let $\Om$ be a connected compact orientable manifold with boundary $\partial\Om$. Then $\partial\Om$ is orientable.
\end{theorem}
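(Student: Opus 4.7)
The plan is to use the standard equivalence that a smooth manifold is orientable if and only if it carries a nowhere vanishing top-degree differential form, and to construct such a form on $\partial\Om$ by contracting the orientation form of $\Om$ against an outward normal. First I would fix any Riemannian metric on $\Om$; by the collar neighborhood theorem $\partial\Om$ is a smoothly embedded codimension one submanifold, and the metric produces a smooth outward unit normal vector field $\nu$ along $\partial\Om$. Let $\omega$ be a nowhere vanishing $(n+1)$-form on $\Om$ representing the given orientation.

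Next I would define the $n$-form $\eta$ on $\partial\Om$ by
\[\eta = i^{*}(\iota_{\nu}\omega),\]
where $i:\partial\Om\hookrightarrow\Om$ is the inclusion and $\iota_{\nu}$ denotes interior multiplication. To check $\eta$ is nowhere vanishing, fix $p\in\partial\Om$ and any basis $\{e_{1},\dots,e_{n}\}$ of $T_{p}\partial\Om$. Since $\nu(p)$ is transverse to $\partial\Om$, the vectors $\{\nu(p),e_{1},\dots,e_{n}\}$ form a basis of $T_{p}\Om$, so
\[\eta_{p}(e_{1},\dots,e_{n}) = \omega_{p}(\nu(p),e_{1},\dots,e_{n}) \neq 0.\]
Hence $\eta$ is a nowhere vanishing smooth top form on $\partial\Om$, which is exactly an orientation.

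I do not expect any real obstacle for this classical fact; the only items requiring care are the smoothness of a globally defined transverse vector field along $\partial\Om$ (which is handled by the collar neighborhood theorem, or equivalently by averaging local outward fields with a partition of unity and normalizing with a Riemannian metric) and the routine verification that a nowhere vanishing smooth top form indeed induces an oriented atlas on $\partial\Om$, namely the one consisting of charts in which $\eta$ is a positive multiple of the standard volume form. Neither connectedness nor compactness of $\Om$ plays an essential role in the argument; both hypotheses merely ensure that the orientation is a single global choice of sign as stated.
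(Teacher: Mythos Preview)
Your argument is correct; the contraction of the orientation form against a smooth outward-pointing transverse vector field is the standard way to orient $\partial\Om$, and you have handled the one nontrivial point (existence of a globally smooth transverse field) via the collar neighborhood. Note that the paper does not actually prove this statement: it simply cites Hirsch, \emph{Differential Topology}, Chap.~4, Lemma~4.1 and Theorem~4.2, where the argument is phrased in terms of orientations of vector bundles (roughly, $T\Om|_{\partial\Om}\cong T(\partial\Om)\oplus N$ with $N$ trivial, so orientability of $T\Om$ forces orientability of $T(\partial\Om)$). Your differential-forms version is the same idea in slightly different language and is, if anything, more self-contained.
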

\begin{theorem}\label{separate orientable}
\emph{(\cite[Chap 4, Theorem 4.5]{H})} Let $M$ be a connected closed orientable manifold, and $\Si\subset M$ a connected closed embedded submanifold of codimension $1$. If $\Si$ separates $M$, i.e. $M\setminus\Si$ has two connected components, then $\Si$ is orientable.
\end{theorem}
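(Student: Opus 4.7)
The plan is to deduce orientability of $\Si$ from the previous result (Theorem \ref{boundary orientable}) applied to the closure of one of the two open components cut out by $\Si$.

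First I would write $M \setminus \Si = \Om_{1} \sqcup \Om_{2}$ with $\Om_{1}, \Om_{2}$ disjoint connected open sets. The key structural claim is that $\overline{\Om_{1}}$ is a smooth connected compact manifold with boundary $\Si$. To verify this, pick any $p \in \Si$ and a local chart $\varphi\colon U \to (-\ep, \ep) \times D^{n}$ sending $\Si \cap U$ to $\{0\} \times D^{n}$. The two components $U_{+} = \varphi^{-1}\big((0, \ep) \times D^{n}\big)$ and $U_{-} = \varphi^{-1}\big((-\ep, 0) \times D^{n}\big)$ of $U \setminus \Si$ are connected and disjoint from $\Si$, so each lies entirely in one of $\Om_{1}$ or $\Om_{2}$. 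The separation hypothesis ensures they lie in different components (otherwise, shrinking $U$, one could connect points on opposite sides of $\Si$ by a path in $M \setminus \Si$, forcing $\Om_{1} = \Om_{2}$). Thus near each $p \in \Si$ the set $\overline{\Om_{1}}$ looks like a closed half-space in $M$, so $\overline{\Om_{1}}$ is a smooth manifold-with-boundary and $\partial \overline{\Om_{1}} = \Si$.

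Next I would observe that $\overline{\Om_{1}}$ is orientable: $\Om_{1}$ is an open subset of the orientable manifold $M$, hence inherits an orientation (pull back the nowhere-zero top form on $M$), and this smooth top form extends continuously across the collar $\Si$ inside $\overline{\Om_{1}}$ since the ambient smooth structure does. Thus $\overline{\Om_{1}}$ is a connected compact orientable manifold with boundary $\Si$. Applying Theorem \ref{boundary orientable} directly yields that $\Si = \partial \overline{\Om_{1}}$ is orientable.

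The main obstacle, and essentially the only non-formal step, is the local two-sidedness claim: showing that near each point of $\Si$ the two local sides of a hypersurface chart belong to distinct components of $M \setminus \Si$. This is precisely where the global separation hypothesis is used; without it one could encounter the Möbius-band phenomenon in which a single component of $M \setminus \Si$ wraps around $\Si$ and touches it from both sides, preventing $\overline{\Om_{1}}$ from being a manifold-with-boundary. Once this local model is established, the conclusion is a direct application of the preceding theorem.
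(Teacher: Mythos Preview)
The paper does not give its own proof of this statement: it is quoted verbatim from Hirsch's \emph{Differential Topology} as a background fact, with no argument supplied. So there is no paper proof to compare against.

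Your approach---realize $\overline{\Om_{1}}$ as a compact orientable manifold with boundary $\Si$ and invoke Theorem~\ref{boundary orientable}---is the standard one and is correct in outline. One step deserves more care than the parenthetical you give it. Saying that if $U_{+}$ and $U_{-}$ lie in the same component at a single point $p$ then ``$\Om_{1}=\Om_{2}$'' is not a valid deduction as written: locally connecting the two sides at $p$ only tells you they sit in the same $\Om_{i}$, not that there is only one component globally. The missing ingredient is the connectedness of $\Si$. The set of $p\in\Si$ at which both local sides lie in $\Om_{1}$ is open in $\Si$, as are the analogous sets for $\Om_{2}$ and for the mixed case; since $\Si$ is connected, exactly one of these three sets is all of $\Si$. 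If both sides lay in $\Om_{1}$ everywhere, then $\overline{\Om_{2}}$ would be disjoint from $\Si$, hence open and closed in $M$, contradicting connectedness of $M$. Thus the mixed case holds at every point of $\Si$, and your identification $\partial\overline{\Om_{1}}=\Si$ goes through. With this patch the argument is complete.
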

\begin{lemma}\label{normal bundle trivial}
Given $M$ and $\Si$ as above, then $\Si$ is orientable if and only if the normal bundle of $\Si$ inside $M$ is trivial.
\end{lemma}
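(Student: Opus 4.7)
The plan is to use the standard correspondence between orientations and nowhere-vanishing sections of top exterior powers, combined with the splitting $TM|_{\Si} = T\Si \oplus N\Si$ coming from the fact that $\Si$ is embedded in $M$ of codimension one.

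First, I would observe that since $\Si$ has codimension $1$, the normal bundle $N\Si$ is a real line bundle over $\Si$. For a real line bundle, triviality is equivalent to the existence of a nowhere-vanishing section, which is in turn equivalent to orientability of the bundle (this is the special feature of rank-one real bundles).

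Next, restricting the tangent bundle of $M$ to $\Si$, I would use the direct sum decomposition $TM|_{\Si}=T\Si\oplus N\Si$, and take top exterior powers to obtain
\begin{equation*}
\Lambda^{n+1}(TM)|_{\Si}\;\cong\;\Lambda^{n}(T\Si)\otimes N\Si.
\end{equation*}
Since $M$ is orientable, $\Lambda^{n+1}(TM)$ is a trivial line bundle over $M$, so its restriction to $\Si$ is also trivial. Hence $\Lambda^{n}(T\Si)\otimes N\Si$ is a trivial line bundle over $\Si$.

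Finally, tensoring both sides by $N\Si$ again (and using that for a real line bundle $L$ one has $L\otimes L$ trivial), one gets that $\Lambda^{n}(T\Si)$ is trivial if and only if $N\Si$ is trivial. Since $\Si$ is orientable exactly when $\Lambda^{n}(T\Si)$ is trivial, this gives the desired equivalence. The main step is really just the tensor-product bookkeeping above; the only conceptual point is the line-bundle fact that orientability equals triviality in rank one, which allows the normal bundle side of the equivalence to be phrased either as ``$N\Si$ trivial'' or ``$N\Si$ orientable.''
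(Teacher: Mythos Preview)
Your proof is correct and follows essentially the same approach as the paper: both start from the splitting $TM|_{\Si}=T\Si\oplus N$ and use the orientability of $M$ to conclude. The paper simply cites this as a corollary of Lemma~4.1 and Theorem~4.3 in Chapter~4 of Hirsch, whereas you have unpacked those cited facts explicitly via determinant line bundles and the identity $L\otimes L\cong\underline{\R}$ for real line bundles.
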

\begin{proof}
The tangent bundle has a splitting $TM\big{|}_{\Si}=T\Si\oplus N$, where $N$ is the normal bundle. Hence our result is a corollary of Lemma 4.1 and Theorem 4.3 in \cite[Chap 4]{H}.
\end{proof}

We also need the following result which says that any two connected minimal surfaces must intersect in positive Ricci curvature manifolds.
\begin{theorem}\label{intersection}
\emph{(Generalized Hadamard Theorem in \cite{F})} Let $(M, g)$ be a connected manifold with $Ric_{g}>0$, then any two connected closed immersed minimal hypersurfaces $\Si$ and $\Si^{\pr}$ must intersect.
\end{theorem}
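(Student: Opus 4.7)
The plan is to argue by contradiction via the classical second variation / parallel transport argument of Frankel. Suppose the images of $\Si$ and $\Si'$ are disjoint in $M$. Since $\Si$ and $\Si'$ are closed (hence compact) and $M$ is complete, the distance $L := d(\Si, \Si') > 0$ is realized by some unit-speed minimizing geodesic $\ga : [0, L] \to M$ with $\ga(0) \in \Si$ and $\ga(L) \in \Si'$. The first variation of length then forces $\ga$ to hit each hypersurface orthogonally: $\ga'(0) \perp T_{\ga(0)}\Si$ and $\ga'(L) \perp T_{\ga(L)}\Si'$.

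Next I would build $n$ admissible variations. Choose an orthonormal basis $\{v_1, \ldots, v_n\}$ of $T_{\ga(0)}\Si$ and parallel transport it along $\ga$ to obtain a parallel orthonormal frame $\{E_1(t), \ldots, E_n(t)\}$ along $\ga$, each orthogonal to $\ga'$. Since $\ga'(L)^\perp$ and $T_{\ga(L)}\Si'$ both have dimension $n$ and coincide, $\{E_i(L)\}$ is automatically an orthonormal basis of $T_{\ga(L)}\Si'$. For each $i$, construct a variation $\alpha_i(s, t)$ of $\ga$ with variational field $E_i$ and endpoint curves satisfying $\alpha_i(s, 0) \in \Si$ and $\alpha_i(s, L) \in \Si'$. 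Because $\ga$ minimizes length among all curves joining $\Si$ to $\Si'$, each second variation satisfies $I(E_i, E_i) \geq 0$.

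Finally, I would apply the second variation formula. Since $E_i$ is parallel, $\nabla_{\ga'} E_i = 0$, and for a normal variation of $\ga$ with endpoints free to move on $\Si$ and $\Si'$ the formula reduces to
\[ I(E_i, E_i) = -\int_0^L R(E_i, \ga', \ga', E_i) \, dt - \mathrm{II}_\Si(v_i, v_i) - \mathrm{II}_{\Si'}(E_i(L), E_i(L)), \]
with each scalar second fundamental form computed using $\ga'$ as unit normal at the respective endpoint. Summing over $i = 1, \ldots, n$, the interior integrand traces up to $Ric_g(\ga', \ga')$ and the boundary terms assemble into the scalar mean curvatures of $\Si$ at $\ga(0)$ and of $\Si'$ at $\ga(L)$, both of which vanish by minimality of $\Si$ and $\Si'$. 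Thus
\[ 0 \leq \sum_{i=1}^n I(E_i, E_i) = -\int_0^L Ric_g(\ga', \ga') \, dt < 0, \]
the strict inequality coming from $Ric_g > 0$ along $\ga$. This contradiction forces $\Si \cap \Si' \neq \emptyset$. The only delicate point in this outline is pinning down the sign conventions in the boundary terms so that they really assemble into the vanishing scalar mean curvatures with respect to a common choice of unit normal; everything else is standard bookkeeping.
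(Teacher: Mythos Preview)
The paper does not give its own proof of this theorem; it merely cites Frankel's original result. Your argument is precisely Frankel's classical second-variation proof, and it is correct as outlined (the sign worry you flag is harmless: with $\ga'$ chosen as the outward normal to $\Si$ at $\ga(0)$ and the inward normal to $\Si'$ at $\ga(L)$, the two boundary contributions are exactly $-H_\Si(\ga(0))$ and $-H_{\Si'}(\ga(L))$, both zero).
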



Let $\Si^{n}\subset M^{n+1}$ be a minimal hypersurface. When $\Si$ is two-sided, i.e. the normal bundle of $\Si$ is trivial, there always exists a unit normal vector field $\nu$.
The \emph{Jacobi operater} is 
$$L\phi=\lap_{\Si}\phi+(Ric(\nu, \nu)+|A|^{2})\phi,$$
where $\phi\in C^{\infty}(\Si)$, $\lap_{\Si}$ is the Laplacian operator on $\Si$ with respect to the induced metric, and $A$ is the second fundamental form of $\Si$. $\la\in\R$ is an \emph{eigenvalue} of $L$ if there exists a $\phi\in C^{\infty}(\Si)$, such that $L\phi=-\la\phi$. The \emph{Morse index} (abbreviated as \emph{index} in the following) of $\Si$, denoted by $ind(\Si)$, is the number of negative eigenvalues of $L$ counted with multiplicity. $\Si$ is called \emph{stable} if $ind(\Si)\geq 0$, or in another word $L$ is a nonpositive operator. Clearly $Ric_{g}>0$ implies that there is no closed two-sided stable minimal hypersurface. 

Using basic algebraic topology and geometric measure theory, together with the fact that there is no two-sided stable minimal hypersurface when $Ric_{g}>0$, we can show the reverse of Theorem \ref{separate orientable} when $2\leq n\leq 6$. 
\begin{proposition}\label{separable}
Let $(M^{n+1}, g)$ be a connected closed orientable Riemannian manifold with $2\leq n\leq 6$ and $Ric_{g}>0$, then every embedded connected closed orientable hypersurface $\Si^{n}\subset M^{n+1}$ must separate $M$.
\end{proposition}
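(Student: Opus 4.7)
My plan is to argue by contradiction via mass minimization in integral homology. Suppose $\Si$ does not separate $M$. Fixing orientations on $M$ and $\Si$, the hypersurface determines an integral class $[\Si]\in H_n(M;\mZ)$. Non-separation forces $[\Si]$ to be nonzero already mod $2$: if $\Si$ bounded a mod-$2$ chain then by standard algebraic topology $M\setminus\Si$ would still be connected, so $[\Si]\neq 0$ in $H_n(M;\mZ/2)$, and therefore also in $H_n(M;\mZ)$.

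I would then minimize mass in the class $[\Si]$. By the Federer--Fleming compactness theorem for integral currents, there is a mass minimizer $T\in\Z_n(M)$ with $[T]=[\Si]$. Because the ambient dimension satisfies $n+1\leq 7$, the codimension-one interior regularity theorem (Federer, De Giorgi, Almgren, Simons) applies: the support of $T$ is a smooth closed embedded minimal hypersurface, and $T$ can be written as $T=\sum_i m_i [\Si_i]$, where the $\Si_i$'s are the disjoint connected components of $\mathrm{spt}(T)$ and $m_i\in\mZ_{\geq 1}$. Each $\Si_i$ inherits a continuous unit normal from the orientation of $T$, so it is orientable in $M$, and by Lemma \ref{normal bundle trivial} its normal bundle is trivial, i.e.\ $\Si_i$ is two-sided.

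The contradiction then comes from stability. Being a component of a mass-minimizing integral current, each $\Si_i$ is area-minimizing under smooth normal deformations, hence stable, so
$$\int_{\Si_i}\big(|\nabla\phi|^2-(Ric(\nu,\nu)+|A|^2)\phi^2\big)\,d\mH^n\geq 0 \quad \text{for all } \phi\in C^\infty(\Si_i).$$
Testing with $\phi\equiv 1$ yields $\int_{\Si_i}(Ric(\nu,\nu)+|A|^2)\,d\mH^n\leq 0$, which is impossible since $Ric_g>0$.

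The main obstacle, and the reason the argument is nontrivial, is obtaining a \emph{two-sided} minimal hypersurface from the non-separation assumption. This is exactly where the orientability hypotheses on both $M$ and $\Si$ are used: minimizing in the integral class $[\Si]\in H_n(M;\mZ)$ (rather than in $H_n(M;\mZ/2)$) forces each component of the minimizer to be orientable, which via Lemma \ref{normal bundle trivial} upgrades to two-sidedness, so that the positive-Ricci obstruction to stable two-sided minimal hypersurfaces can be invoked. The dimension restriction $n\leq 6$ enters only through the codimension-one regularity theorem.
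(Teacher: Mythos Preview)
Your approach is essentially the same as the paper's: both argue by contradiction, show that non-separation forces $[\Si]\neq 0$ in $H_n(M;\mZ)$, minimize mass in that integral homology class, invoke codimension-one regularity for $2\le n\le 6$ to get a smooth minimizer, upgrade orientability to two-sidedness via Lemma~\ref{normal bundle trivial}, and then contradict $Ric_g>0$ using stability of the minimizer.

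The one point to fix is your justification that $[\Si]\neq 0$. Your sentence is inverted: if $\Si$ bounded a mod-$2$ chain then $M\setminus\Si$ would be \emph{disconnected} (the chain and its complement give the two pieces), not connected, and \emph{that} is what contradicts the non-separation hypothesis; as written your implication points the wrong way and the conclusion is a non sequitur. The paper sidesteps this by an explicit intersection argument: using that $M\setminus\Si$ is connected, it builds a closed curve $\gamma$ meeting $\Si$ transversally exactly once (a short arc across $\Si$ joined to an arc in $M\setminus\Si$), and then $[\Si]\cdot[\gamma]\neq 0$ gives $[\Si]\neq 0$ in $H_n(M;\mZ)$ directly.
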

\begin{proof}
Since $\Si^{n}$ is orientable, the fundamental class $[\Si^{n}]$ (see \cite[p.355]{B}) of $\Si$ represents a homology class in $H_{n}(M, \mZ)$. Using the language of geometric measure theory, $\Si$ is an integral $n$ cycle, hence represents an integral $n$ homology class $[\Si^{n}]$ in $H_{n}(M, \mZ)$ (see \cite[Chap4, \S 4.4]{FH}). Suppose that $\Si^{n}$ does not separate. Take a coordinates chart $U\subset M$, such that $U\cap \Si\neq\emptyset$. Since $\Si^{n}$ is embedded, $\Si$ separates $U$ into $U_{1}$ and $U_{2}$ after possibly shrinking $U$. Pick $p_{1}\in U_{1}$ and $p_{2}\in U_{2}$. We can connect $p_{1}$ to $p_{2}$ by a curve $\ga_{1}$ inside $U$, such that $\ga_{1}$ intersects $\Si$ transversally only once. Since $\Si$ does not separate, $M\setminus \Si$ is connected. We can connect $p_{1}$ to $p_{2}$ by a curve $\ga_{2}$ inside $M\setminus\Si$. Now we get a closed curve $\ga=\ga_{1}\cup\ga_{2}$, which intersects $\Si$ transversally only once. Hence $\Si$ meets $\ga$ transversally, and $\Si\cap\ga$ is a single point. Using the intersection theory (see \cite[page 367]{B}), the intersection of the $n$ homology $[\Si]$ and the $1$ homology $[\ga]$ is
$$[\Si]\cdot[\ga]=[\Si\cap\ga]\neq 0.$$
Hence $[\Si]\neq 0$ in $H_{n}(M, \mZ)$. Now we can minimize the mass inside the integral homology class $[\Si]$ (as a collection of integral cycles). \cite[Lemma 34.3]{S} tells us that there is a minimizing integral current $T_{0}\in[\Si]$. Moreover, the codimention one regularity theory (\cite[Theorem 37.7]{S}) when $2\leq n\leq 6$ implies that $T_{0}$ is represented by a smooth $n$ dimensional hypersurface $\Si_{0}$ (possibly with multiplicity) , i.e. $T_{0}=m[\Si_{0}]$, where $m\in\mZ$, $m\neq 0$. Since $m[\Si_{0}]$ represents a nontrivial integral homology class, $\Si_{0}$ is orientable. The fact that both $M$ and $\Si_{0}$ are orientable implies that the normal bundle of $\Si_{0}$ is trivial by Lemma \ref{normal bundle trivial}, hence $\Si_{0}$ is two-sided. By the nature of mass minimizing property of $T$, $\Si_{0}$ must be locally volume minimizing, hence $\Si_{0}$ is stable. This is a contradiction with $Ric_{g}>0$.
\end{proof}

From now on, we always assume that $(M^{n+1}, g)$ is connected closed oriented with $2\leq n\leq 6$, and hypersurfaces $\Si^{n}\subset M^{n+1}$ are connected closed and embedded in this section.



\subsection{Orientable case}\label{orientable case}

The following proposition is a key observation in proving our main theorem, which asserts that every orientable minimal hypersurface lies in a good sweepout in manifold $(M^{n+1}, g)$ of positive Ricci curvature when $2\leq n\leq 6$. Denote
\begin{equation}\label{S+}
\mS_{+}=\{\Si^{n}\subset(M^{n+1}, g):\ \Si^{n} \textrm{ is an orientable minimal hypersurface in } M\}.
\end{equation}
\begin{proposition}\label{existence of good sweepout}
For any $\Si\in\mS_{+}$, there exists a sweepout $\{\Si_{t}\}_{t\in[-1, 1]}$ of $M$, such that
\begin{itemize}
\vspace{-5pt}
\setlength{\itemindent}{1em}
\addtolength{\itemsep}{-0.7em}
\item[$(a)$] $\Si_{0}=\Si$;
\item[$(b)$] $\mH^{n}(\Si_{t})\leq V(\Si)$, with equality only if $t=0$;
\item[$(c)$] $\{\Si_{t}\}_{t\in[-\ep, \ep]}$ forms a smooth foliation of a neighborhood of $\Si$, i.e. there exists $w(t, x)\in C^{\infty}([-\ep, \ep]\times \Si)$, $w(0, x)=0$, $\frac{\partial}{\partial t}w(0, x)>0$, such that
$$\Si_{t}=\{exp_{x}\big(w(t, x)\nu(x)\big):\ x\in\Si\},\quad t\in [-\ep, \ep],$$
where $\nu$ is the unit normal vector field of $\Si$ in $M$.
\vspace{-5pt}
\end{itemize}
\end{proposition}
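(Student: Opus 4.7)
The plan is to split $M$ along $\Si$, build a local foliation of $\Si$ from the first eigenfunction of the Jacobi operator, and extend to a global sweepout by exploiting mean-convexity of the two resulting caps.

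First, Proposition~\ref{separable} implies that $\Si$ separates $M$ into two open regions $M_1, M_2$ with common boundary $\Si$. Since $M$ and $\Si$ are both orientable, $\Si$ is two-sided by Lemma~\ref{normal bundle trivial}; fix a unit normal $\nu$ pointing into $M_2$. Ricci positivity forces the first eigenvalue $\la_1$ of the Jacobi operator $L_\Si = \lap_\Si + |A|^2 + Ric(\nu,\nu)$ to be strictly negative, since otherwise $\Si$ would be two-sided stable, contradicting $Ric_g > 0$; let $\phi_1 > 0$ be the corresponding positive first eigenfunction. The local foliation is then
$$\Si_t := \{\exp_x(t\,\phi_1(x)\,\nu(x)) : x \in \Si\}, \qquad |t| \leq \ep,$$
where $\ep > 0$ is chosen so small that $(t,x) \mapsto \exp_x(t\phi_1(x)\nu(x))$ is a diffeomorphism onto a tubular neighborhood of $\Si$. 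Taking $w(t,x) = t\phi_1(x)$ verifies $(c)$. The first variation of area vanishes at $t=0$ by minimality, while the second variation equals $\la_1 \int_\Si \phi_1^2\, d\mH^n < 0$, so $\mH^n(\Si_t)$ has a strict local maximum at $t=0$; shrinking $\ep$ if necessary, this yields $(a)$ and $(b)$ on $[-\ep, \ep]$.

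To extend to $t \in [\ep, 1]$ (and symmetrically to $[-1,-\ep]$), note that $\Si_\ep$ bounds a compact region $M_+ \subset M_2$. The evolution of mean curvature under the foliation is $\partial_t H|_{t=0} = L_\Si \phi_1 = -\la_1 \phi_1 > 0$, so $\Si_\ep$ has strictly positive mean curvature with respect to the unit normal pointing into $M_+$; equivalently, the mean curvature vector of $\partial M_+$ points everywhere into $M_+$, so $M_+$ is pointwise mean-convex. I now build a sweepout of $M_+$ starting at $\partial M_+ = \Si_\ep$, collapsing to a finite interior point set, with every slice of area strictly less than $V(\Si)$. Concretely, I pick a Morse function $f : M_+ \to [\ep, 1]$ with $f^{-1}(\ep) = \partial M_+$ and only finitely many interior local maxima as critical points; the level sets $\Si_t := f^{-1}(t)$ form a generalized smooth family in the sense of Definition~\ref{definition of sweepout}, and I then deform this sweepout by an area-reducing ambient isotopy driven by the inward mean-convex direction along $\partial M_+$ to drive the maximum slice area strictly below $\mH^n(\Si_\ep) < V(\Si)$. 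The symmetric construction on the other side gives the extension on $[-1, -\ep]$, and concatenation yields the sweepout on $[-1, 1]$.

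The hardest step is obtaining a sweepout of $M_+$ with uniformly-controlled slice areas. A raw Morse-function sweepout has no a priori area bound, and the mean-convex push-in argument must be delicate enough to propagate the inward area decrease from the boundary through the interior. The key point, which I would use to close the argument, is that $M_+$ contains no closed two-sided stable minimal hypersurface that could obstruct the inward collapse (again by $Ric_g > 0$), so the deformation can always continue reducing area until the family degenerates to a finite point set.
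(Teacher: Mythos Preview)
Your local construction---separating $M$ along $\Si$, foliating by the first Jacobi eigenfunction, and reading off mean-convexity of $\Si_\ep$---is correct and matches the paper exactly. The gap is in the extension step, and it is a real one.

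You propose to take a Morse-function sweepout of $M_+$ and then ``deform by an area-reducing ambient isotopy driven by the inward mean-convex direction'' to push the maximal slice area below $V(\Si)$. You justify this by saying that $M_+$ contains no closed two-sided \emph{stable} minimal hypersurface. This is the wrong obstruction. An ambient isotopy supported near $\partial M_+$ decreases area only for slices near the boundary; it does nothing to control slices deep in the interior, which is where the max may be attained. More fundamentally, the obstruction to lowering the width of a homotopically closed family of sweepouts is a min-max critical point, i.e.\ a closed minimal hypersurface in $\mathrm{int}(M_+)$, and such a hypersurface is typically \emph{unstable}. Ruling out stable ones via $Ric_g>0$ is not enough.

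The paper closes the gap with two ingredients you have not invoked. First, it applies Theorem~\ref{min-max existence} (the min-max theorem for manifolds with mean-convex boundary) to $(M_+,\partial M_+)$: if the width of the sweepout family exceeded $V(\partial M_+)$, there would exist a closed embedded minimal hypersurface in $\mathrm{int}(M_+)$. Second, it rules this out not by stability considerations but by Frankel's intersection theorem (Theorem~\ref{intersection}): under $Ric_g>0$, any two closed minimal hypersurfaces must intersect, so none can live in $\mathrm{int}(M_+)$ disjoint from $\Si$. Together these force the width to be at most $V(\partial M_+)=V(\Si_{\ep/4})<V(\Si)$, and \emph{then} one simply selects a sweepout in the family with maximal area below $V(\Si)$. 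Your intuition that ``something minimal would have to obstruct the collapse'' is correct, but the mechanism that produces the obstruction is the min-max theorem, and the mechanism that kills it is Frankel, not instability.
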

\begin{proof}
By Proposition \ref{separable}, $\Si$ separates $M$, hence $M\setminus \Si=M_{1}\cup M_{2}$ is a disjoint union of two connected components $M_{1}$ and $M_{2}$, with $\partial M_{1}=\partial M_{2}=\Si$. Assume that the unit normal vector field $\nu$ points into $M_{1}$. We denote $\la_{1}$ to be the first eigenvalue of the Jacobi operator $L$, and $u_{1}$ the corresponding eigenfunction. The first eigenvalue has multiplicity 1, and $u_{1}>0$ everywhere on $\Si$. $Ric_{g}>0$ means that $\Si$ is unstable, hence $\la_{1}<0$, i.e. $Lu_{1}=-\la_{1}u_{1}>0$.

Consider the local foliation by the first eigenfunction via exponential map,
$$\Si_{s}=\{exp_{x}\big(su_{1}(x)\nu(x)\big):\ x\in\Si\},\quad s\in[-\ep, \ep].$$
\begin{itemize}
\setlength{\itemindent}{1em}
\addtolength{\itemsep}{-0.7em}
\item For $\ep>0$ small enough, since $u_{1}>0$, the map $F: [-\ep, \ep]\times\Si\rightarrow M$ given by $F(s, x)=exp_{x}\big(su_{1}(x)\nu(x)\big)$ is a smooth diffeomorphic one to one map, hence $\{\Si_{s}\}_{s\in[-\ep, \ep]}$ is a smooth foliation of a neighborhood of $\Si$.
\item Since $u_{1}>0$, $\Si_{s}$ is contained in $M_{1}$ (in $M_{2}$) for $0<s<\ep$ (for $-\ep<s<0$).
\item By the first and second variational formulae (see \cite{CM2}\cite{S}),
$$\frac{d}{ds}\bigg{|}_{s=0}V(\Si_{s})=-\int_{\Si}Hu_{1}d\mu=0,\quad \frac{d^{2}}{ds^{2}}\bigg{|}_{s=0}V(\Si_{s})=-\int_{\Si}u_{1}Lu_{1}d\mu<0,$$
where $H\equiv 0$ is the mean curvature of $\Si$. So $V(\Si_{s})\leq V(\Si)$ for $s\in[-\ep, \ep]$, and equality holds only if $s=0$.
\item Denote $\vec{H}_{s}$ to be the mean curvature operator of $\Si_{s}$, then
$$\frac{\partial}{\partial s}\bigg{|}_{s=0}\lan\vec{H}_{s}, \nu\ran=Lu_{1}>0.$$
Hence $H(\Si_{s})>0$ for $0<s<\ep$ with respect to the normal $\nu$ for $\ep$ small enough.
\end{itemize}

Denote $M_{1, s_{0}}=M_{1}\setminus\{\Si_{s}\}_{0\leq s\leq s_{0}}$ for $0<s_{0}\leq \ep$, which is the region bounded by $\Si_{s_{0}}$. Similarly we have $M_{2, s_{0}}$, such that $\partial M_{2, s_{0}}=\Si_{-s_{0}}$. We will extend the foliation $\{\Si_{s}\}$ to $M_{1, s_{0}}$ and $M_{2, s_{0}}$. We need the following claim, which is proved in Appendix \ref{appendix},
\begin{claim}\label{claim for existence of good sweepout}
For $\ep$ small enough, there exists a sweepout $\{\ti{\Si}_{s}\}_{s\in[-1, 1]}$, such that $\ti{\Si}_{s}=\Si_{s}$ for $s\in [-\frac{1}{2}\ep, \frac{1}{2}\ep]$, and $\ti{\Si}_{s}\subset M_{1, \frac{1}{2}\ep}$ (or $\subset M_{2, \frac{1}{2}\ep}$) when $s>\frac{1}{2}\ep$ (or $s<-\frac{1}{2}\ep$).
\end{claim}
Now cut out part of the sweepout $\{\ti{\Si}_{s}\}_{s\in[\frac{1}{4}\ep, 1]}$, which is then a sweepout of $(M_{1, \frac{1}{4}\ep}, \partial M_{1, \frac{1}{4}\ep})$ (abbreviated as $(M_{1}, \partial M_{1})$) by Definition \ref{definition of sweepout}. Consider the smallest homotopically closed family $\ti{\La}_{1}$ of sweepouts containing $\{\ti{\Si}_{s}\}_{s\in[\frac{1}{4}\ep, 1]}$. If the width $W(M_{1}, \partial M_{1}, \ti{\La}_{1})>V(\partial M_{1})$, then by Theorem \ref{min-max existence} and the fact that $H(\partial M_{1})=H(\Si_{\frac{1}{4}\ep})>0$, there is a nontrivial embedded minimal hypersurface $\ti{\Si}$ lying in the interior of $M_{1}$, which is then disjoint with $\Si$, hence is a contradiction to Theorem \ref{intersection}. So $W(M_{1}, \partial M_{1}, \ti{\La}_{1})\leq V(\partial M_{1})$, which means that there exists sweepouts $\{\ti{\Si}^{\pr}_{s}\}_{s\in[\frac{1}{4}\ep, 1]}$ of $(M_{1}, \partial M_{1})$, with $\max_{s\in[\frac{1}{4}\ep, 1]}\mH^{n}(\ti{\Si}^{\pr}_{s})$ very close to $V(\partial M_{1})$. Since $\partial M_{1}=\Si_{\frac{1}{4}\ep}$, and $V(\Si_{\frac{1}{4}\ep})<V(\Si)$ by our construction above, we can pick up one sweepout $\{\ti{\Si}^{\pr}_{s}\}_{s\in[\frac{1}{4}\ep, 1]}$ with $\max_{s\in[\frac{1}{4}\ep, 1]}\mH^{n}(\ti{\Si}^{\pr}_{s})<V(\Si)$. 

We can do similar things to $M_{2, \frac{1}{4}\ep}$ to get another partial sweepout. Then we finish by putting them together with $\{\Si_{s}\}_{s\in[-\frac{1}{4}\ep, \frac{1}{4}\ep]}$.
\end{proof}

\subsection{Non-orientable case}\label{non-orientable}

We have the following topological characterization of non-orientable embedded hypersurfaces in orientable manifold $M$. 
\begin{proposition}\label{double cover orientation}
For any non-orientable embedded hypersurface $\Si^{n}$ in an orientable manifold $M^{n+1}$, there exists a connected double cover $\ti{M}$ of $M$, such that the lifts $\ti{\Si}$ of $\Si$ is a connected orientable embedded hypersurface. Furthermore, $\ti{\Si}$ separates $\ti{M}$, and both components of $\ti{M}\setminus\ti{\Si}$ are diffeomorphic to $M\setminus\Si$.
\end{proposition}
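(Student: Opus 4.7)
My plan is to construct $\tilde M$ explicitly by cutting $M$ along $\Sigma$ and doubling. Two structural observations drive the construction. First, because $M$ is orientable while $\Sigma$ is non-orientable, the contrapositive of Theorem \ref{separate orientable} shows that $\Sigma$ does not separate $M$, so $M \setminus \Sigma$ is connected, and by Lemma \ref{normal bundle trivial} the normal line bundle $N$ of $\Sigma$ in $M$ is non-trivial. Second, from $w_1(TM|_\Sigma)=0$ one deduces $w_1(T\Sigma) = w_1(N)$, so the orientation double cover $\pi_\Sigma:\tilde\Sigma \to \Sigma$ (connected and orientable because $\Sigma$ is connected and non-orientable) coincides canonically with the unit normal $S^0$-bundle of $N$; let $\tau:\tilde\Sigma\to\tilde\Sigma$ denote its deck involution.

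I would then cut $M$ along $\Sigma$ to obtain a connected manifold with boundary $\hat M$ whose interior is $M \setminus \Sigma$ and whose boundary is $S(N) = \tilde\Sigma$. Taking two disjoint copies $\hat M_1$ and $\hat M_2$ of $\hat M$, define
$$ \tilde M := \hat M_1 \sqcup \hat M_2 \big/ \sim,\qquad x \in \partial \hat M_1 \sim \tau(x) \in \partial \hat M_2, $$
and let $\pi : \tilde M \to M$ be the map which on each $\hat M_i$ is the natural projection (the identity on the interior and $\pi_\Sigma$ on the boundary); the gluing by $\tau$ makes this well-defined.

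The heart of the argument is checking that $\pi$ is a covering map. In a tubular neighborhood of $p\in\Sigma$ with local sides $U^+,U^-$ of $\Sigma$, the two preimages $\tilde p^\pm \in \tilde\Sigma$ naturally correspond to $U^\pm$; the twist by $\tau$ glues $\tilde p^+\in\partial\hat M_1$ to $\tilde p^-\in\partial\hat M_2$, so a neighborhood of the glued point in $\tilde M$ is assembled from $U^+$ in $\hat M_1$, a disk in $\tilde\Sigma$, and $U^-$ in $\hat M_2$, and projects bijectively onto the tubular neighborhood of $p$. This local triviality, together with the manifest $2$-to-$1$ character, shows $\pi$ is a cover. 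The remaining claims are then immediate: $\tilde M$ is connected since $\hat M_1,\hat M_2$ are connected and share the common boundary $\tilde\Sigma$; the embedded closed hypersurface $\tilde\Sigma$ is orientable and separates $\tilde M$ into $\mathrm{int}(\hat M_1)$ and $\mathrm{int}(\hat M_2)$, each diffeomorphic to $M\setminus\Sigma$ by construction. The main obstacle is the subtle local identification of the two sheets of $\tilde\Sigma$ with the two local sides of $\Sigma$; this is precisely what the equality $w_1(T\Sigma) = w_1(N)$ guarantees, so once this identification is set up the covering property is a direct check in tubular coordinates.
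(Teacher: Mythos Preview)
Your construction is correct and follows the same global strategy as the paper: cut $M$ along $\Sigma$ to obtain a connected manifold-with-boundary whose boundary double covers $\Sigma$, then glue two copies along the deck involution. The one substantive difference is in how you identify the boundary of the cut manifold with the connected orientable double cover of $\Sigma$. You invoke Stiefel--Whitney classes: from $w_1(TM|_\Sigma)=0$ you get $w_1(T\Sigma)=w_1(N)$, so the orientation double cover of $\Sigma$ and the unit normal sphere bundle $S(N)$ are the same $\mathbb{Z}/2$-cover. The paper instead argues elementarily: the cut manifold $\Omega$ is orientable (as an open subset of $M$), so $\partial\Omega$ is orientable by Theorem~\ref{boundary orientable}; a direct local-chart argument shows $\partial\Omega\to\Sigma$ is a double cover; and if $\partial\Omega$ were disconnected, each sheet would be diffeomorphic to $\Sigma$, contradicting non-orientability of $\Sigma$. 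Your characteristic-class identification is cleaner conceptually and makes the ``two local sides $\leftrightarrow$ two sheets of $\tilde\Sigma$'' correspondence transparent, while the paper's argument stays within the elementary differential-topology toolkit already set up (Theorems~\ref{boundary orientable} and~\ref{separate orientable}, Lemma~\ref{normal bundle trivial}) and avoids any cohomological machinery.
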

\begin{proof}
Since $\Si$ is non-orientable, hence $M\setminus\Si$ is connected by Theorem \ref{separate orientable}. Denote $\Om=M\setminus\Si$. $\Om$ has a topological boundary $\partial\Om$. $\Om$ is orientable since $M$ is orientable, hence $\partial\Om$ is orientable by Theorem \ref{boundary orientable}.
\begin{claim}
$\partial\Om$ is a double cover of $\Si$.
\end{claim}
This is proved as follows. $\forall x\in\Si$, there exists a neighborhood $U$ of $x$, i.e. $x\in U\subset M$, with $U$ diffeomorhic to a unit ball $B_{1}(0)$. Since $\Si$ is embedded, after possibly shrinking $U$, $\Si\cap U$ is a topological $n$ dimensional ball, and $\Si$ separates $U$ into two connected components $U_{1}$ and $U_{2}$, i.e. $U\setminus\Si=U_{1}\cup U_{2}$. Then the sets $U\cap\Si\simeq (\partial U_{1})\cap\Si\simeq (\partial U_{2})\cap\Si$ are diffeomorphic. The sets $\{U\cap\Si\}$ form a system of local coordinate charts for $\Si$. Moreover $\{(\partial U_{1})\cap\Si, (\partial U_{2})\cap\Si\}$ form a systems of local coordinate charts for $\partial\Om$, and $\{(U_{1}, \partial U_{1}\cap\Si), (U_{2}, \partial U_{2}\cap\Si)\}$ form a systems of local boundary coordinate charts for $(\Om, \partial\Om)$. Hence $\partial\Om$ is a double covering of $\Si$, with the covering map given by $(\partial U_{1})\cap\Si, (\partial U_{2})\cap\Si)\rightarrow U\cap\Si$.

Since $\Si$ is connected, $\partial\Om$ has no more than two connected components. If $\partial\Om$ is not connected, then $\partial\Om$ has two connected components, i.e. $\partial\Om=(\partial\Om)_{1}\cup(\partial\Om)_{2}$, with $\Si\simeq(\partial\Om)_{1}\simeq(\partial\Om)_{2}$. Hence $\Si$ is orientable since $\partial\Om$ is orientable, which is a contradiction. So $\Om$ must be connected. Let $\ti{M}=\Om\sqcup_{\{\partial\Om: x\rightarrow x^{*}\}}\Om$ be the gluing of two copies of $(\Om, \partial\Om)$ along $\partial\Om$ using the deck transformation map $x\rightarrow x^{*}$ of the covering $\partial\Om\rightarrow\Si$, then the lift of $\Si$ is $\ti{\Si}\simeq\partial\Om$. $\ti{M}$ is then orientable and satisfies all the requirements.
\end{proof}

As a direct corollary of the results in the previous section, we can embed a double cover of a non-orientable minimal hypersurface to a sweepout in the double cover $\ti{M}$ of a manifold $(M^{n+1}, g)$ with positive Ricci curvature when $2\leq n\leq 6$. Let
\begin{equation}\label{S-}
\mS_{-}=\{\Si^{n}\subset(M^{n+1}, g):\ \Si^{n} \textrm{ is a non-orientable minimal hypersurface in } M\}.
\end{equation}
\begin{proposition}\label{existence of good sweepout2}
Given $\Si\in\mS_{-}$, there exists a family $\{\Si_{t}\}_{t\in[0, 1]}$ of closed sets, such that
\begin{itemize}
\vspace{-5pt}
\addtolength{\itemsep}{-0.7em}
\item[$(a)$] $\Si_{0}=\emptyset$;
\item[$(b)$] $\{\Si_{t}\}_{t\in[0, 1]}$ satisfies (s1)(sw1)(sw2)(sw3) in Definition \ref{definition of sweepout};
\item[$(c)$] $\max_{t\in[0, 1]}\mH^{n}(\Si_{t})=2V(\Si)$ and $\mH^{n}(\Si_{t})<2V(\Si)$ for all $t\in[0, 1]$;
\item[$(d)$] (s2) in Definition \ref{definition of sweepout} only fails when $t\rightarrow 0$, where $\mH^{n}(\Si_{t})\rightarrow 2V(\Si)$;
\item[$(e)$] (s3) in Definition \ref{definition of sweepout} only fails when $t\rightarrow 0$, where $\Si_{t}\rightarrow 2\Si$.
\end{itemize}
\end{proposition}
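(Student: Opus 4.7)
The plan is to construct $\{\Si_t\}$ in two pieces: an initial ``radial'' sweepout via equidistant hypersurfaces of $\Si$ that at $t=0$ limit (as varifolds) to $2\Si$, and a minimax sweepout of the complement of a tubular neighborhood of $\Si$, produced as in the second half of the proof of Proposition~\ref{existence of good sweepout}. Since $\Si$ is non-orientable in the orientable $M$, its normal line bundle $N\to\Si$ is non-trivial, so for sufficiently small $\delta>0$ the normal exponential map gives a tubular neighborhood $V_\delta:=\{d(\cdot,\Si)<\delta\}$ and, for each $s\in(0,\delta]$, the equidistant hypersurface $\Ga_s:=\{d(\cdot,\Si)=s\}=\partial V_s$ is smooth, connected, and diffeomorphic to the orientation double cover $\ti\Si$ from Proposition~\ref{double cover orientation} (the unit $S^0$-bundle of a non-trivial line bundle over $\Si$ is a connected double cover). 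Lifting to $\ti M$ and invoking the first and second variation formulae with $u\equiv 1$, together with $Ric_g>0$, $|A|^2\geq 0$, and minimality of $\ti\Si$, one sees $V(\Ga_s)<2V(\Si)$ for all $s\in(0,\delta]$ (after shrinking $\delta$), $V(\Ga_s)\to 2V(\Si)$ and $\Ga_s\to 2|\Si|$ as varifolds as $s\to 0^+$, and $H(\Ga_s)>0$ with respect to the unit normal pointing away from $\Si$.

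Next I sweep out the complement $\overline{M\setminus V_\delta}$, a compact manifold with boundary $\Ga_\delta$ of positive mean curvature with respect to the inward unit normal, starting from $\Ga_\delta$. Fix any initial sweepout of $(\overline{M\setminus V_\delta},\Ga_\delta)$ (for instance the level sets of a Morse function vanishing on $\Ga_\delta$), and let $\ti\La$ be the smallest homotopically closed family of sweepouts containing it. If $W(\overline{M\setminus V_\delta},\Ga_\delta,\ti\La)>V(\Ga_\delta)$, Theorem~\ref{min-max existence} would produce a nontrivial closed embedded minimal hypersurface in the interior of $M\setminus V_\delta$, necessarily disjoint from $\Si$, contradicting Theorem~\ref{intersection} under $Ric_g>0$. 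Hence $W\leq V(\Ga_\delta)<2V(\Si)$, and we may pick a sweepout $\{\Si^*_\tau\}_{\tau\in[0,1]}\in\ti\La$ satisfying $\max_\tau\mH^n(\Si^*_\tau)<2V(\Si)$, $\Si^*_0=\Ga_\delta$, $\Om^*_0=\emptyset$, and $\Om^*_1=\mathrm{int}(\overline{M\setminus V_\delta})$.

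Finally concatenate: declare $\Si_0=\emptyset$ and $\Om_0=\emptyset$; for $t\in(0,1/2]$ set $\Si_t=\Ga_{2t\delta}$ and $\Om_t=V_{2t\delta}$; for $t\in[1/2,1]$ set $\Si_t=\Si^*_{2t-1}$ and $\Om_t=V_\delta\cup\Om^*_{2t-1}$. Continuity at $t=1/2$ is immediate from $\Si^*_0=\Ga_\delta$ and $\Om^*_0=\emptyset$. Item (a) is trivial. For (b), (s1) and (sw1) are inherited piecewise, (sw2) uses continuity on each piece together with $\mathrm{Vol}(V_{2t\delta})\to 0$ as $t\to 0^+$, and (sw3) gives $\Om_0=\emptyset$ and $\Om_1=V_\delta\cup\mathrm{int}(\overline{M\setminus V_\delta})=M\setminus\Ga_\delta$, which agrees with $M$ modulo the measure-zero set $\Ga_\delta$ and can be absorbed by declaring $\Om_1:=M$. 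Item (c) holds because each piece has area strictly below $2V(\Si)$ by construction, while $\mH^n(\Ga_{2t\delta})\to 2V(\Si)$ from below as $t\to 0^+$. Items (d) and (e) then hold exactly at $t=0$: $\mH^n(\Ga_{2t\delta})\to 2V(\Si)\neq 0=\mH^n(\Si_0)$ and $\Ga_{2t\delta}\to 2|\Si|$ as varifolds while $\Si_0=\emptyset$. The main obstacle is the minimax dichotomy in the middle paragraph, which is resolved by invoking Theorem~\ref{intersection} to forbid interior minimal hypersurfaces, exactly as in the proof of Proposition~\ref{existence of good sweepout}.
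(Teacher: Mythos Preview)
Your argument is correct, but it takes a slightly different route from the paper. The paper's proof is one short paragraph: it passes to the connected double cover $\ti M$ from Proposition~\ref{double cover orientation}, applies Proposition~\ref{existence of good sweepout} verbatim to the orientable lift $\ti\Si\subset\ti M$ to get a sweepout $\{\ti\Si_t\}_{t\in[-1,1]}$, and then keeps only the half $t\in(0,1]$ (which lies in one copy of $M\setminus\Si$) while declaring $\Si_0=\emptyset$. In particular the paper never re-runs the min-max dichotomy or the variational computations; those are all absorbed into the black-box invocation of Proposition~\ref{existence of good sweepout}, and the near-$\Si$ foliation there is by the first eigenfunction $u_1$ of the Jacobi operator.

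You instead work entirely inside $M$: you foliate a tube of $\Si$ by equidistant hypersurfaces $\Ga_s=\{d(\cdot,\Si)=s\}$ (the $u\equiv 1$ variation, which works because $L(1)=Ric(\nu,\nu)+|A|^2>0$ under $Ric_g>0$), observe these are connected double covers of $\Si$, and then reproduce the second half of the proof of Proposition~\ref{existence of good sweepout} on $\overline{M\setminus V_\de}$. This is perfectly valid and perhaps more self-contained, at the cost of redoing work already packaged in Proposition~\ref{existence of good sweepout}. One small point to tighten: to guarantee $(s3)$ at the junction $t=\tfrac12$, choose the initial sweepout of $(\overline{M\setminus V_\de},\Ga_\de)$ so that its collar foliation near $\Ga_\de$ (as in $(sw3')$) agrees with your equidistant foliation $\{\Ga_s\}$; this is exactly how Claim~\ref{claim for existence of good sweepout} is used in the paper, and the homotopy condition $(*)$ then ensures the min-max sweepout you extract still matches smoothly at $\Ga_\de$.
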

\begin{proof}
Consider the double cover $(\ti{M}, g)$ given by Proposition \ref{double cover orientation}. The lift $\ti{\Si}$ is an orientable minimal hypersurface, and must has the double volume of $\Si$, i.e. $V(\ti{\Si})=2V(\Si)$. $\ti{\Si}$ separates $\ti{M}$ into two isomorphic components $\ti{M}_{1}$ and $\ti{M}_{2}$, which are both isomorphic to $M\setminus\Si$. We can apply Proposition \ref{existence of good sweepout} to $(\ti{M}, \ti{\Si})$ to get a sweepout $\{\ti{\Si}_{t}\}_{t\in[-1, 1]}$ satisfying (a)(b)(c) there. By the construction, we know that $\ti{\Si}_{t}\subset M_{1}$ for $t>0$, and $\ti{\Si}_{t}\subset M_{2}$ for $t<0$. To define $\{\Si_{t}\}_{t\in[0, 1]}$, we can let $\Si_{t}=\ti{\Si}_{t}$ while identifying $M_{1}$ with $M\setminus\Si$, and let $\Si_{0}=\emptyset$. Then the properties follow from those of $\{\ti{\Si}_{t}\}_{t\in[-1, 1]}$.
\end{proof}


\section{Min-max theory \uppercase\expandafter{\romannumeral2}---Almgren-Pitts discrete setting}\label{min-max theory 2}

Let us introduce the min-max theory developed by Almgren and Pitts \cite{A1}\cite{A2}\cite{P}. We will briefly give the notations in \cite[\S 4.1]{P} in order to state the min-max theorem. Marques and Neves also gave a nice introduction in \cite[\S 7 and \S 8]{MN2}. For notations in geometric measure theory, we refer to \cite{S}, \cite[\S 2.1]{P} and \cite[\S 4]{MN2}.

Fix an oriented Riemannian manifold $(M^{n+1}, g)$ of dimension $n+1$, with $2\leq n\leq 6$. Assume that $(M^{n+1}, g)$ is embedded in some $\R^{N}$ for $N$ large. We denote by $\bI_{k}(M)$ the space of $k$-dimensional integral currents in $\R^{N}$ with support in $M$; $\Z_{k}(M)$ the space of integral currents $T\in\bI_{k}(M)$ with $\partial T=0$; and $\V_{k}(M)$ the space $k$-dimensional rectifiable varifolds in $\R^{N}$ with support in $M$, endowed with the weak topology. Given $T\in\bI_{k}(M)$, $|T|$ and $\|T\|$ denote the integral varifold and Radon measure in $M$ associated with $T$ respectively. $\F$ and $\M$ denote the flat norm and mass norm on $\bI_{k}(M)$ respectively. $\bI_{k}(M)$ and $\Z_{k}(M)$ are in general assumed to have the flat norm topology. $\bI_{k}(M, \M)$ and $\Z_{k}(M, \M)$ are the same space endowed with the mass norm topology. Given a smooth surface $\Si$ or an open set $\Om$ as in Definition \ref{definition of sweepout}, we use $[[\Si]]$, $[[\Om]]$ and $[\Si]$, $[\Om]$ to denote the corresponding integral currents and integral varifolds respectively.

We are mainly interested in the application of the Almgren-Pitts theory to the special case $\pi_{1}\big(\Z_{n}(M^{n+1}), \{0\}\big)$, so our notions will be restricted to this case.
\begin{definition}\label{cell complex}
(cell complex of $I=[0, 1]$)
\begin{itemize}
\vspace{-5pt}
\addtolength{\itemsep}{-0.7em}
\item[$1.$] $I=[0, 1]$, $I_{0}=\{[0], [1]\}$;
\item[$2.$] For $j\in\N$, $I(1, j)$ is the cell complex of $I$, whose $1$-cells are all interval of form $[\frac{i}{3^{j}}, \frac{i+1}{3^{j}}]$, and $0$-cells are all points $[\frac{i}{3^{j}}]$. Denote $I(1, j)_{p}$ the set of all $p$-cells in $I(1, j)$, with $p=0, 1$, and $I_{0}(1, j)=\{[0], [1]\}$ the boundary $0$-cells;
\item[$3.$] Given $\al$ a $1$-cell in $I(1, j)$ and $k\in\N$, $\al(k)$ denotes the $1$-dimensional sub-complex of $I(1, j+k)$ formed by all cells contained in $\al$, and $\al(k)_{0}$ are the boundary $0$-cells of $\al$; 
\item[$4.$] The boundary homeomorphism $\partial: I(1, j)\rightarrow I(1, j)$ is given by $\partial[a, b]=[b]-[a]$ and $\partial[a]=0$; 
\item[$5.$] The distance function $d: I(1, j)_{0}\times I(1, j)_{0}\rightarrow\mZ^{+}$ is defined as $d(x, y)=3^{j}|x-y|$;
\item[$6.$] The map $n(i, j): I(1, i)_{0}\rightarrow I(1, j)_{0}$ is defined as: $n(i, j)(x)\in I(1, j)_{0}$ is the unique element, such that $d\big(x, n(i, j)(x)\big)=\inf\big\{d(x, y): y\in I(1, j)_{0}\big\}$.
\end{itemize}
\end{definition}

Consider a map to the space of integral cycles: $\phi: I(1, j)_{0}\rightarrow\Z_{n}(M^{n+1})$. The \emph{fineness of $\phi$} is defined as:
\begin{equation}\label{fineness}
\mf(\phi)=\sup\Big\{\frac{\M\big(\phi(x)-\phi(y)\big)}{d(x, y)}:\ x, y\in I(1, j)_{0}, x\neq y\Big\}.
\end{equation}
A map $\phi: I(1, j)_{0}\rightarrow\big(\Z_{n}(M^{n+1}), \{0\}\big)$ means that $\phi\big(I(1, j)_{0}\big)\subset\Z_{n}(M^{n+1})$ and $\phi|_{I_{0}(1, j)_{0}}=0$, i.e. $\phi([0])=\phi([1])=0$.

\begin{definition}\label{homotopy for maps}
Given $\de>0$ and $\phi_{i}: I(1, k_{i})_{0}\rightarrow\big(\Z_{n}(M^{n+1}), \{0\}\big)$, $i=1, 2$. We say \emph{$\phi_{1}$ is $1$-homotopic to $\phi_{2}$ in $\big(\Z_{n}(M^{n+1}), \{0\}\big)$ with fineness $\de$}, if $\exists\ k_{3}\in\N$, $k_{3}\geq\max\{k_{1}, k_{2}\}$, and
$$\psi: I(1, k_{3})_{0}\times I(1, k_{3})_{0}\rightarrow \Z_{n}(M^{n+1}),$$
such that
\begin{itemize}
\vspace{-5pt}
\setlength{\itemindent}{1em}
\addtolength{\itemsep}{-0.7em}
\item $\mf(\psi)\leq \de$;
\item $\psi([i-1], x)=\phi_{i}\big(n(k_{3}, k_{i})(x)\big)$;
\item $\psi\big(I(1, k_{3})_{0}\times I_{0}(1, k_{3})_{0}\big)=0$.
\end{itemize}
\end{definition}

\begin{definition}\label{(1, M) homotopy sequence}
A \emph{$(1, \M)$-homotopy sequence of mappings into $\big(\Z_{n}(M^{n+1}), \{0\}\big)$} is a sequence of mappings $\{\phi_{i}\}_{i\in\N}$,
$$\phi_{i}: I(1, k_{i})_{0}\rightarrow\big(\Z_{n}(M^{n+1}), \{0\}\big),$$
such that $\phi_{i}$ is $1$-homotopic to $\phi_{i+1}$ in $\big(\Z_{n}(M^{n+1}), \{0\}\big)$ with fineness $\de_{i}$, and
\begin{itemize}
\vspace{-5pt}
\setlength{\itemindent}{1em}
\addtolength{\itemsep}{-0.7em}
\item $\lim_{i\rightarrow\infty}\de_{i}=0$;
\item $\sup_{i}\big\{\M(\phi_{i}(x)):\ x\in I(1, k_{i})_{0}\big\}<+\infty$.
\end{itemize}
\end{definition}

\begin{definition}\label{homotopy for sequences}
Given $S_{1}=\{\phi^{1}_{i}\}_{i\in\N}$ and $S_{2}=\{\phi^{2}_{i}\}_{i\in\N}$ two $(1, \M)$-homotopy sequence of mappings into $\big(\Z_{n}(M^{n+1}), \{0\}\big)$. \emph{$S_{1}$ is homotopic with $S_{2}$} if $\exists\ \{\de_{i}\}_{i\in\N}$, such that
\begin{itemize}
\vspace{-5pt}
\setlength{\itemindent}{1em}
\addtolength{\itemsep}{-0.7em}
\item $\phi^{1}_{i}$ is $1$-homotopic to $\phi^{2}_{i}$ in $\big(\Z_{n}(M^{n+1}), \{0\}\big)$ with fineness $\de_{i}$;
\item $\lim_{i\rightarrow \infty}\de_{i}=0$.
\end{itemize}
\end{definition}

The relation ``is homotopic with" is an equivalent relation on the space of $(1, \M)$-homotopy sequences of mapping into $\big(\Z_{n}(M^{n+1}), \{0\}\big)$ (see \cite[\S 4.1.2]{P}). An equivalent class is a \emph{$(1, \M)$ homotopy class of mappings into $\big(\Z_{n}(M^{n+1}), \{0\}\big)$}. Denote the set of all equivalent classes by $\pi^{\#}_{1}\big(\Z_{n}(M^{n+1}, \M), \{0\}\big)$. Similarly we can define the $(1, \F)$-homotopy class, and denote the set of all equivalent classes by $\pi^{\#}_{1}\big(\Z_{n}(M^{n+1}, \F), \{0\}\big)$. In fact, Almgren-Pitts showed that they are all isomorphic to the top homology group.

\begin{theorem}\label{isomorphism}
\emph{(\cite[Theorem 13.4]{A1} and \cite[Theorem 4.6]{P})} The followings are all isomorphic:
$$H_{n+1}(M^{n+1}),\ \pi^{\#}_{1}\big(\Z_{n}(M^{n+1}, \M), \{0\}\big),\ \pi^{\#}_{1}\big(\Z_{n}(M^{n+1}, \F), \{0\}\big).$$
\end{theorem}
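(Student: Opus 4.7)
The plan is to construct Almgren's isomorphism $F\colon \pi^{\#}_{1}\bigl(\Z_{n}(M,\M),\{0\}\bigr)\to H_{n+1}(M;\mZ)$ explicitly, and then to compare the mass and flat versions. The whole construction rests on Federer's isoperimetric inequality for codimension one: there exists a constant $\nu_{M}>0$ such that every $T\in\Z_{n}(M)$ with $\M(T)<\nu_{M}$ bounds a canonical mass--minimizing integral current $Q(T)\in\bI_{n+1}(M)$, with $\M(Q(T))$ controlled by $\M(T)$.

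First I would define $F$. Given a $(1,\M)$-homotopy sequence $\{\phi_{i}\}$, pick $i$ large enough that $\mf(\phi_{i})<\nu_{M}$. For each adjacent pair of $0$-cells $x,x'\in I(1,k_{i})_{0}$ (i.e.\ $d(x,x')=1$) the difference $\phi_{i}(x')-\phi_{i}(x)$ bounds a canonical $Q_{x,x'}\in\bI_{n+1}(M)$. Sum these fillings along the edges of $I(1,k_{i})$ to obtain a chain $A(\phi_{i})\in\bI_{n+1}(M)$ whose boundary is $\phi_{i}([1])-\phi_{i}([0])=0$. Since $\dim M=n+1$, $A(\phi_{i})$ is automatically an integral cycle and thus defines a class $[A(\phi_{i})]\in H_{n+1}(M;\mZ)$. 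A $1$-homotopy $\psi$ of fineness $\de$ between $\phi_{i}$ and $\phi_{i+1}$ produces a two-dimensional grid of canonical fillings whose assembly is an $(n+2)$-dimensional integral chain with boundary $A(\phi_{i})-A(\phi_{i+1})$, so the homology class is independent of $i$ and of the equivalence class of the sequence. Setting $F([\{\phi_{i}\}])=[A(\phi_{i})]$ gives a well-defined homomorphism.

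Next I would show $F$ is a bijection. For surjectivity, since $M$ is closed orientable, $H_{n+1}(M;\mZ)\cong\mZ\cdot[M]$; pick a Morse function $f\colon M\to[0,1]$, use Proposition \ref{level sets of morse function} to get a sweepout $\{\Si_{t}\}$, and discretize by sampling $\phi_{j}([\tfrac{i}{3^{j}}])=[[\{f\le \tfrac{i}{3^{j}}\}]]$. Continuity of the mass along level sets, together with the coarea formula, shows $\mf(\phi_{j})\to 0$, and unwinding the definition of $A$ gives $A(\phi_{j})=[[M]]$, so $F$ hits $[M]$. For injectivity, if $A(\phi_{i})=\partial R$ with $R\in\bI_{n+2}(M)$, the slicing theorem applied to $R$ over an auxiliary interval produces, after sufficient refinement, a $1$-homotopy from $\phi_{i}$ to the zero map with fineness $O(\M(R)/3^{j})\to 0$.

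The main obstacle is the comparison $\pi^{\#}_{1}(\Z_{n}(M,\M),\{0\})\cong\pi^{\#}_{1}(\Z_{n}(M,\F),\{0\})$. One direction is immediate: because $\F\le\M$, every $(1,\M)$-homotopy sequence is a $(1,\F)$-homotopy sequence and every $(1,\M)$-$1$-homotopy is a $(1,\F)$-$1$-homotopy, giving a natural map from the mass to the flat homotopy group. The reverse direction requires upgrading small $\F$-fineness to small $\M$-fineness: given $\phi$ with $\F(\phi(x')-\phi(x))$ small, decompose $\phi(x')-\phi(x)=\partial Q+R$ with $Q,R$ of controlled mass; then along a refined subcomplex insert intermediate cycles of the form $\phi(x)+\partial Q'$ for a finite chain of intermediate fillings $Q'\subset Q$ whose consecutive mass differences are as small as desired. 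Combined with the uniform mass bound required in Definition \ref{(1, M) homotopy sequence}, this produces an $(1,\M)$-representative of the given $(1,\F)$-class. Checking that the two filling/interpolation procedures are compatible at the level of homotopy classes, and that the Almgren map $F$ factors through both, is the technical heart of \S 4 of \cite{P} and \S 13 of \cite{A1}; in our one-parameter setting the bookkeeping simplifies, but the interpolation step is still the most delicate piece.
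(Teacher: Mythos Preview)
The paper does not give its own proof of this statement; it is quoted as a black box from \cite[Theorem 13.4]{A1} and \cite[Theorem 4.6]{P}, so there is nothing in the paper to compare your argument against. Your description of the map $F$ is nonetheless exactly the construction the paper later recalls in the proof of Theorem \ref{identification of (1, M) homotopy class}.

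That said, your surjectivity argument has a genuine gap. Writing $\phi_{j}([\tfrac{i}{3^{j}}])=[[\{f\le \tfrac{i}{3^{j}}\}]]$ you presumably mean $\partial[[\{f\le \tfrac{i}{3^{j}}\}]]$, but the real problem is the claim that $\mf(\phi_{j})\to 0$. For two distinct regular level sets $\Si_{t}$, $\Si_{s}$ of a Morse function one has $\M(\Si_{t}-\Si_{s})=\M(\Si_{t})+\M(\Si_{s})$ whenever they are disjoint, and this is bounded away from zero no matter how close $t$ and $s$ are. The coarea formula only shows that the \emph{flat} distance $\F(\Si_{t},\Si_{s})$ is small, via the mass of the interpolating $(n+1)$-current. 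Producing a mass-fine discrete sequence from a flat-continuous sweepout is precisely the content of Theorem \ref{generating (1, M) homotopy sequence} in the paper, and it already requires the interpolation machinery you postpone to your $\M$-versus-$\F$ discussion; you cannot get surjectivity of $F$ on the $\M$-side without it.

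Your injectivity argument also does not work as written. Since $\dim M=n+1$, the group $\bI_{n+2}(M)$ is trivial, so ``$A(\phi_{i})=\partial R$ with $R\in\bI_{n+2}(M)$'' simply means $A(\phi_{i})=0$ as a current, and there is no $R$ to slice. What is actually needed is to produce, from the vanishing of $A(\phi_{i})$, an explicit $1$-homotopy in the sense of Definition \ref{homotopy for maps} from $\phi_{i}$ to the zero map; in Almgren's proof this is done via his chain-homotopy/deformation argument on the cycle space, not by slicing an $(n+2)$-current.
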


\begin{definition}
(Min-max definition) Given $\Pi\in\pi^{\#}_{1}\big(\Z_{n}(M^{n+1}, \M), \{0\}\big)$, define:
$$\bL: \Pi\rightarrow\R^{+}$$
as a function given by:
$$\bL(S)=\bL(\{\phi_{i}\}_{i\in\N})=\limsup_{i\rightarrow\infty}\max\big\{\M\big(\phi_{i}(x)\big):\ x \textrm{ lies in the domain of $\phi_{i}$}\big\}.$$
The \emph{width of $\Pi$} is defined as
\begin{equation}\label{width}
\bL(\Pi)=\inf\{\bL(S):\ S\in\Pi\}.
\end{equation}
$S\in\Pi$ is call a \emph{critical sequence}, if $\bL(S)=\bL(\Pi)$. Let $K: \Pi\rightarrow\{\textrm{compact subsets of $\V_{n}(M^{n+1})$}\}$ be defined by
$$K(S)=\{V:\ V=\lim_{j\rightarrow\infty}[\phi_{i_{j}}(x_{j})]:\ \textrm{$x_{j}$ lies in the domain of $\phi_{i_{j}}$}\}.$$
A \emph{critical set} of $S$ is $C(S)=K(S)\cap\{V:\ \M(V)=\bL(S)\}$.
\end{definition}

The celebrated min-max theorem of Almgren-Pitts (Theorem 4.3, 4.10, 7.12, Corollary 4.7 in \cite{P}) and Schoen-Simon (for $n=6$ \cite[Theorem 4]{SS}) is as follows.
\begin{theorem}\label{AP min-max theorem}
Given a nontrivial $\Pi\in\pi^{\#}_{1}\big(\Z_{n}(M^{n+1}, \M), \{0\}\big)$, then $\bL(\Pi)>0$, and there exists a stationary integral varifold $\Si$, whose support is a closed smooth embedded minimal hypersurface (which may be disconnected with multiplicity), such that
$$\|\Si\|(M)=\bL(\Pi).$$
In particular, $\Si$ lies in the critical set $C(S)$ of some critical sequence.
\end{theorem}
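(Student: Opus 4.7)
The plan is to follow the classical Almgren--Pitts scheme in three movements: positivity of the width, a tightening/almost-minimizing extraction, and regularity. First I would establish $\bL(\Pi)>0$ using the isomorphism of Theorem \ref{isomorphism}: nontriviality of $\Pi$ corresponds to a nonzero class in $H_{n+1}(M^{n+1})\cong\mZ$. If there were a critical sequence $S=\{\phi_{i}\}$ with $\bL(S)\to 0$, then for large $i$ every consecutive difference $\phi_{i}(x)-\phi_{i}(y)$ (and every $\phi_{i}(x)$ itself) would have arbitrarily small mass, and by the isoperimetric inequality on $M$ each such cycle would bound an integral $(n{+}1)$-current of small mass; gluing these fillings along the discrete path would construct an $(n{+}1)$-current witnessing that the associated class in $H_{n+1}(M)$ is trivial, a contradiction.

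Next I would fix a critical sequence $S^{*}\in\Pi$ with $\bL(S^{*})=\bL(\Pi)$ and run the \emph{pull-tight} (tightening) construction. The idea is to build a continuous pseudo-gradient deformation of varifolds which decreases mass at non-stationary varifolds and fixes stationary ones, then transport it to the discrete setting via careful interpolation so that the fineness is preserved up to a controlled loss. After this deformation one may assume every $V\in C(S^{*})$ is a stationary integral varifold. Only the equality $\|V\|(M)=\bL(\Pi)$ is automatic from the definitions of $\bL$ and $C(S^{*})$.

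The central combinatorial step is to extract from $S^{*}$ a min-max sequence $\phi_{i_{j}}^{*}(x_{j})\to V\in C(S^{*})$ which is \emph{almost minimizing in small annuli} (Pitts' notion). I would argue by contradiction: if no slice of any critical sequence in $\Pi$ were almost minimizing, then at every sufficiently fine scale one could list finitely many mass-decreasing competitors at each slice and splice them into the discrete homotopies $\psi$ of Definition \ref{homotopy for maps} — using the one-parameter structure of $I(1,j)_{0}$ so that the splicings commute correctly across neighbouring $1$-cells — to build a new sequence $\tilde{S}\in\Pi$ with $\bL(\tilde{S})<\bL(\Pi)$. This is the most delicate part and depends crucially on the discretization: it is the reason the Almgren--Pitts framework (rather than the continuous setting of Section \ref{min-max theory 1}) is needed for the full theorem.

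The final step is regularity. Applying Pitts' regularity theorem for $2\leq n\leq 5$ and its extension by Schoen--Simon for $n=6$, a stationary integral varifold $V$ that is almost minimizing in annuli is supported on a smooth closed embedded minimal hypersurface, possibly with integer multiplicities. This yields the $\Si$ of the statement, with $\|\Si\|(M)=\bL(\Pi)$ and $\Si\in C(S^{*})$. I expect the regularity theorem to be the main obstacle: it requires Pitts' varifold replacement construction together with the codimension-one curvature estimates of Schoen--Simon(--Yau), whose failure in higher dimensions is precisely what restricts the theorem to $n\leq 6$.
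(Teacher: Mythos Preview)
The paper does not supply a proof of this theorem: it is stated as the ``celebrated min-max theorem of Almgren--Pitts (Theorem 4.3, 4.10, 7.12, Corollary 4.7 in \cite{P}) and Schoen--Simon (for $n=6$ \cite[Theorem 4]{SS})'' and invoked as a black box. So there is no in-paper proof to compare against; your outline is really a sketch of the argument in those references.

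As such a sketch, your proposal is essentially correct and matches the architecture of Pitts' original proof: positivity of $\bL(\Pi)$ via the isoperimetric/filling argument (Corollary 4.7 in \cite{P}), pull-tight to make every element of $C(S)$ stationary (Theorem 4.3), the combinatorial extraction of an almost-minimizing min-max sequence (Theorem 4.10), and the replacement-based regularity (Theorem 7.12, with \cite{SS} for $n=6$). One small point you glossed over: before tightening you asserted the existence of a critical sequence $S^{*}$ with $\bL(S^{*})=\bL(\Pi)$, but this is not immediate from the infimum definition and requires a diagonal extraction (this is precisely Pitts' Theorem 4.3); you should mention that step explicitly rather than take it for granted.
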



\section{Discretization}\label{discretization}

In this section, we will adapt the families constructed in Section \ref{generating sweepout} to the Almgren-Pitts setting. The families constructed in Section \ref{generating sweepout} are continuous under the flat norm topology, but Almgren-Pitts theory applies only to discrete family continuous under the mass norm topology. So we need to discretize our families and to make them continuous under the mass norm. Similar issue was considered in the celebrated proof of the Willmore conjecture \cite{MN2}. Besides that, we will show that all the discretized families belong to the same homotopy class. The proof is elementary but relatively long. A fist read can cover only the statements of Proposition \ref{generating min-max family in flat topology}, Theorem \ref{generating (1, M) homotopy sequence} and Theorem \ref{identification of (1, M) homotopy class}.

\subsection{Generating min-max family}\label{generating min-max family}

\begin{proposition}\label{continuity and no mass concentration}
Given $\Phi: [0, 1]\rightarrow\Z_{n}(M^{n+1})$ defined by
$$\Phi(x)=[[\partial\Om_{x}]],\quad x\in[0, 1],$$
where $\{\Om_{t}\}_{t\in[0, 1]}$ is a family of open sets satisfies (sw1)(sw2)(sw3) in Definition \ref{definition of sweepout} for some $\{\Si_{t}\}_{t\in[0, 1]}$ satisfying (s1)(s2)(s3) there, then
\begin{itemize}
\vspace{-5pt}
\addtolength{\itemsep}{-0.7em}
\item[$(1)$] $\Phi: [0, 1]\rightarrow\big(\Z_{n}(M^{n+1}), \{0\}\big)$ is continuous under the flat topology;
\item[$(2)$] $\m(\Phi, r)=\sup\big\{\|\Phi(x)\|B(p, r):\ p\in M, x\in[0, 1]\big\}\footnote{The concept $\m$ first appears in \cite[\S 4.2]{MN2}}\rightarrow 0$ when $r\rightarrow 0$, where $B(p, r)$ is the geodesic ball of radius $r$ and centered at $p$ on $M$.
\end{itemize}
\end{proposition}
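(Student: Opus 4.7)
My plan is to deduce flat continuity directly from (sw2). The key observation is that by (sw1), for each $x$ the symmetric difference $\Si_{x}\triangle\partial\Om_{x}$ is contained in the finite set $P_{x}$ and hence has zero $n$-dimensional Hausdorff measure; therefore, as integral currents, $[[\partial\Om_{x}]]$ coincides with the current-theoretic boundary of $[[\Om_{x}]]$. This will give
$$\Phi(s)-\Phi(t)=\partial\big([[\Om_{s}]]-[[\Om_{t}]]\big),$$
and directly from the definition of the flat norm,
$$\F\big(\Phi(s)-\Phi(t)\big)\leq\M\big([[\Om_{s}]]-[[\Om_{t}]]\big)=\textrm{Vol}(\Om_{s}\setminus\Om_{t})+\textrm{Vol}(\Om_{t}\setminus\Om_{s}),$$
which tends to $0$ as $s\to t$ by (sw2). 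The endpoint values $\Phi(0)=\Phi(1)=0$ follow from (sw3), since $\Om_{0}=\emptyset$ and $\partial[[M]]=0$ as $M$ is closed.

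\textbf{Part (2).} I will argue by contradiction. Suppose $\m(\Phi,r)\not\to 0$; then there exist $\ep_{0}>0$, radii $r_{k}\searrow 0$, parameters $x_{k}\in[0,1]$, and points $p_{k}\in M$ with $\|\Phi(x_{k})\|B(p_{k},r_{k})\geq\ep_{0}$. After passing to subsequences, $x_{k}\to x_{0}$ and $p_{k}\to p_{0}$ by compactness of $[0,1]$ and $M$.

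The heart of the argument will be to upgrade (s2)--(s3) into weak convergence of the Radon measures $\mu_{k}:=\|[[\Si_{x_{k}}]]\|$ to $\mu:=\|[[\Si_{x_{0}}]]\|$ on all of $M$. Since $P_{x_{0}}$ is finite, (s3) yields smooth convergence $\Si_{x_{k}}\to\Si_{x_{0}}$ on every compact subset of $M\setminus P_{x_{0}}$, which implies weak convergence of $\mu_{k}$ to $\mu$ on the open set $M\setminus P_{x_{0}}$. Combined with $\mu_{k}(M)\to\mu(M)$ from (s2), a standard tightness argument (testing against continuous cut-offs that vanish on arbitrarily small neighborhoods of $P_{x_{0}}$) shows that no mass can escape to the finite set $P_{x_{0}}$, so $\mu_{k}\to\mu$ weakly on $M$.

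Once this is in hand, I finish via the portmanteau theorem for Radon measures. For any fixed $r>0$, eventually $\overline{B(p_{k},r_{k})}\subset\overline{B(p_{0},2r)}$, so
$$\ep_{0}\leq\mu_{k}\big(B(p_{k},r_{k})\big)\leq\mu_{k}\big(\overline{B(p_{0},2r)}\big).$$
Upper semi-continuity on closed sets then gives $\ep_{0}\leq\mu\big(\overline{B(p_{0},2r)}\big)$, and letting $r\searrow 0$ yields $\ep_{0}\leq\mu(\{p_{0}\})=0$, a contradiction since $\mu$ is a finite Radon measure. The main technical point will be the upgrade from smooth convergence away from $P_{x_{0}}$ plus continuity of total mass to full weak convergence of Radon measures on $M$; everything else is routine.
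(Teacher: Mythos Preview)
Your Part~(1) is exactly the paper's argument: both bound the flat distance by the volume of the symmetric difference $\Om_{s}\triangle\Om_{t}$ and invoke (sw2), with (sw3) handling the endpoints.

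For Part~(2), your approach is correct but genuinely different from the paper's. The paper proceeds directly: it shows that the function $f(r,p,x)=\|\Si_{x}\|\big(B_{r}(p)\big)$ is jointly continuous on $[0,r_{0}]\times M\times[0,1]$ (via two auxiliary lemmas, the second of which contains essentially your ``no mass escapes to $P_{x_{0}}$'' argument), and then uses uniform continuity on this compact domain together with $f(0,p,x)=0$ to conclude $\m(\Phi,r)\to 0$. You instead run a contradiction/compactness argument: extract convergent subsequences $x_{k}\to x_{0}$, $p_{k}\to p_{0}$, upgrade (s2)+(s3) to full weak convergence of the Radon measures $\mu_{k}\to\mu$ on $M$, and finish with portmanteau upper semicontinuity on closed balls shrinking to $\{p_{0}\}$. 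Both routes rest on the same core observation---smooth convergence away from the finite set $P_{x_{0}}$ combined with continuity of total mass forbids mass concentration at $P_{x_{0}}$---but your packaging is somewhat more streamlined, while the paper's yields the slightly stronger intermediate statement that $f$ is continuous.
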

\begin{proof}
By (sw1) and (s1) in Definition \ref{definition of sweepout}, $\partial\Om_{x}$ is smooth away from finitely many points, hence it lies in $\Z_{n}(M^{n+1})$. By (sw3), $\Om_{0}=\emptyset$, $\Om_{1}=M$ implies that $\Phi(0)=\Phi(1)=0$. So $\Phi$ is well-defined as a map to $\big(\Z_{n}(M^{n+1}), \{0\}\big)$.

From the definition of flat norm (see \cite[\S 31]{S}),
$$\F\big(\Phi(x), \Phi(y)\big)\leq\|\Om_{y}-\Om_{x}\|(M)=\textrm{Volume}(\Om_{y}\Delta\Om_{x})\rightarrow 0,$$
as $y\rightarrow x$ by (sw2) in Definition \ref{definition of sweepout}. Here and in the following, we abuse $\Om$ and $\Si$ with the associated integral currents $[[\Om]]$ and $[[\Si]]$.

So what left is the last property, i.e. $\m(\Phi, r)\rightarrow 0$ when $r\rightarrow 0$. Now we will abuse the notation and write $\Phi(x)=\Si_{x}=\partial\Om_{x}$ since they only differ by a finite set of points.
\begin{lemma}
Fix $x\in[0, 1]$, and let $P_{x}$ be the finite set of singular points of $\Si_{x}$, and $B_{r}(P_{x})$ the collection of geodesic balls centered at $P_{x}$ on $M$, then $\lim_{r\rightarrow 0}\|\Si_{x}\|\big(B_{r}(P_{x})\big)=0$\footnote{Here $\|\Si\|$ is the Radon measure corresponding to the integral current $[[\Si]]$ associated with $\Si$ (see \cite[\S 27]{S}).}.
\end{lemma}
\begin{proof}
We only need to show that $\lim_{r\rightarrow 0}\|\Si_{x}\|\big(B_{r}(p)\big)=0$ for every $p\in P_{x}$. By the definition of Hausdorff measure (see \cite[\S 2]{S}), $(\mH^{n}\lc\Si_{x})(\{p\})=\mH^{n}(\Si_{x}\cap\{p\})=\mH^{n}(\{p\})=0$. Since $\mH^{n}(\Si_{x})<+\infty$, by the basic convergence property for Radon measures (see \cite[\S 11.1, Proposition 2.1]{R}),
$$0=(\mH^{n}\lc\Si_{x})(\{p\})=\lim_{r\rightarrow 0}(\mH^{n}\lc\Si_{x})\big(B_{r}(p)\big)=\lim_{r\rightarrow 0}\|\Si_{x}\|\big(B_{r}(p)\big).$$
\end{proof}

Given $r_{0}>0$ small enough, define $f:[0, r_{0}]\times M\times [0, 1]\rightarrow\R^{+}$ by
$$f(r, p, x)=\|\Si_{x}\|\big(B_{r}(p)\big).$$
\begin{lemma}
$f$ is continuous.
\end{lemma}
\begin{proof}
For the continuity of the parameter $``x"$, we can fix the ball $B_{r}(p)$. For any $\ep>0$, we can take $0<r_{x, \ep}\ll1$, such that $\|\Si_{x}\|\big(B_{r_{x, \ep}}(P_{x})\big)<\frac{\ep}{4}$ by the previous lemma, where $P_{x}$ is the finite singular set of $\Si_{x}$. Since $\Si_{y}$ converges to $\Si_{x}$ smooth on compact sets of $M\setminus P_{x}$ by $(s3)$ of Definition \ref{definition of sweepout}, we can find $\de_{x, \ep}$, such that whenever $|y-x|<\de_{x, \ep}$,
$$\big{|}\|\Si_{y}\|\big(B_{r}(p)\setminus B_{r_{x, \ep}}(P_{x})\big)-\|\Si_{x}\|\big(B_{r}(p)\setminus B_{r_{x, \ep}}(P_{x})\big)\big{|}<\frac{\ep}{4}.$$

We claim that $\|\Si_{y}\|\big(B_{r_{x, \ep}}(P_{x})\big)<\frac{\ep}{2}$ if $\de_{x, \ep}$ is small enough. Suppose not, then for a subsequence $y_{i}\rightarrow x$, $\|\Si_{y_{i}}\|\big(B_{r_{x, \ep}}(P_{x})\big)\geq\frac{\ep}{2}$. Notice $(s2)$ in Definition \ref{definition of sweepout}, i.e. $\mH^{n}(\Si_{y})\rightarrow\mH^{n}(\Si_{x})$. Now
$$\mH^{n}(\Si_{y})=\mH^{n}\big(\Si_{y}\setminus B_{r_{x, \ep}}(P_{x})\big)+\mH^{n}\big(\Si_{y}\cap B_{r_{x, \ep}}(P_{x})\big),$$
$$\mH^{n}(\Si_{x})=\mH^{n}\big(\Si_{x}\setminus B_{r_{x, \ep}}(P_{x})\big)+\mH^{n}\big(\Si_{x}\cap B_{r_{x, \ep}}(P_{x})\big).$$
Since $\Si_{y}$ converge smoothly to $\Si_{x}$ on compact subsets of $M\setminus P_{x}$, $\mH^{n}\big(\Si_{y}\setminus B_{r_{x, \ep}}(P_{x})\big)\rightarrow \mH^{n}\big(\Si_{x}\setminus B_{r_{x, \ep}}(P_{x})\big)$, hence we get a contradiction since $\mH^{n}\big(\Si_{y}\cap B_{r_{x, \ep}}(P_{x})\big)-\mH^{n}\big(\Si_{x}\cap B_{r_{x, \ep}}(P_{x})\big)>\frac{\ep}{2}-\frac{\ep}{4}=\frac{\ep}{4}$.

Combing all above, we have $\big{|}\|\Si_{y}\|\big(B_{r}(p)\big)- \|\Si_{x}\|\big(B_{r}(p)\big)\big{|}<\ep$ whenever $|y-x|<\de_{x, \ep}$, and hence proved the continuity of $f$ w.r.t. $``x"$.

For the continuity of the parameter $``r"$, we can fix $\Si_{x}$ and the point $p\in M$. For any $\ep>0$, take $r_{x, \ep}$ as above. For any $\De r>0$,
$$\mH^{n}\big(\Si_{x}\cap B_{r+\De r}(p)\big)-\mH^{n}\big(\Si_{x}\cap B_{r}(p)\big)\leq \mH^{n}\big(\Si_{x}\cap B_{r_{x, \ep}}(P_{x})\big)+\mH^{n}\big(\Si_{x}\cap A(p, r, r+\De r)\setminus B_{r_{x, \ep}}(P_{x})\big),$$
where $A(p, r, r+\De r)$ is the closed annulus. Since $\Si_{x}$ is smooth on $M\setminus P_{x}$ by $(s1)$ in Definition \ref{definition of sweepout}, we can take $\de_{x, \ep}>0$, such that whenever $\De r<\de_{x, \ep}$, $\mH^{n}\big(\Si_{x}\cap A(p, r, r+\De r)\setminus B_{r_{x, \ep}}(P_{x})\big)<\frac{\ep}{4}$. Hence $\mH^{n}\big(\Si_{x}\cap B_{r+\De r}(p)\big)-\mH^{n}\big(\Si_{x}\cap B_{r}(p)\big)<\frac{\ep}{2}$. Similar argument holds for $\De r<0$.

The continuity of the parameter $``p"$ follows exactly the same as that of $``r"$, so we omit the details here.
\end{proof}
Let us go back to the proof of $\lim_{r\rightarrow 0}\m(\Phi, r)=0$. Since $[0, r_{0}]\times M\times [0, 1]$ is compact, $f$ is uniformly continuous. So by standard argument in point-set topology, $\m(\Phi, r)=\sup_{p\in M, x\in[0, 1]}f(r, p, x)\rightarrow 0$ when $r\rightarrow 0$, as $f(0, p, x)=\|\Si_{x}\|(\{p\})=0$.
\end{proof}

Given $\Si\in\mS$, we can define a mapping into $\big(\Z_{n}(M^{n+1}), \{0\}\big)$,
$$\Phi^{\Si}: [0, 1]\rightarrow\big(\Z_{n}(M^{n+1}), \{0\}\big)$$
as follows:
\begin{itemize}
\vspace{-5pt}
\addtolength{\itemsep}{-0.7em}
\item When $\Si\in\mS_{+}$, $\Phi^{\Si}(x)=[[\partial\Om_{2x-1}]]$ for $x\in[0, 1]$, where $\{\Om_{t}\}_{t\in[-1, 1]}$ is the family of open sets of $M$ in Definition \ref{definition of sweepout} corresponding to the sweepout $\{\Si_{t}\}_{t\in[-1, 1]}$ of $\Si$ constructed in Proposition \ref{existence of good sweepout};
\item When $\Si\in\mS_{-}$, $\Phi^{\Si}(x)=[[\partial\Om_{x}]]$ for $x\in[0, 1]$, where $\{\Om_{t}\}_{t\in[0, 1]}$ is the family of open sets of $M$ in Definition \ref{definition of sweepout} corresponding to the family $\{\Si_{t}\}_{t\in[0, 1]}$ of $\Si$ constructed in Proposition \ref{existence of good sweepout2}.
\end{itemize}
Then as a corollary of Proposition \ref{existence of good sweepout},  Proposition \ref{existence of good sweepout2} and Proposition \ref{continuity and no mass concentration}, we have,
\begin{corollary}\label{generating min-max family in flat topology}
$\Phi^{\Si}: [0, 1]\rightarrow\big(\Z_{n}(M^{n+1}), \{0\}\big)$ is continuous under the flat topology, and
\begin{itemize}
\vspace{-5pt}
\addtolength{\itemsep}{-0.7em}
\item[$(a)$] $\sup_{x\in[0, 1]}\M\big(\Phi^{\Si}(x)\big)=V(\Si)$ if $\Si\in\mS_{+}$;
\item[$(b)$] $\sup_{x\in[0, 1]}\M\big(\Phi^{\Si}(x)\big)=2V(\Si)$ if $\Si\in\mS_{-}$;
\item[$(c)$] $\m(\Phi^{\Si}, r)\rightarrow 0$, when $r\rightarrow 0$.
\end{itemize}
\end{corollary}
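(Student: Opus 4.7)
My plan is to derive the corollary essentially as a packaging of the earlier results, with the only genuine work being the treatment of the endpoint $t=0$ in the non-orientable case, where the family $\{\Sigma_t\}$ of Proposition \ref{existence of good sweepout2} has a varifold jump.

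For the orientable case $\Sigma\in\mS_+$, the family $\{\Sigma_t\}_{t\in[-1,1]}$ produced by Proposition \ref{existence of good sweepout} is a sweepout in the sense of Definition \ref{definition of sweepout}, so it satisfies the full hypotheses of Proposition \ref{continuity and no mass concentration}. Applying that proposition (after the linear reparametrization $x\mapsto 2x-1$) gives flat continuity of $\Phi^{\Sigma}$ and $\m(\Phi^{\Sigma},r)\to 0$ as $r\to 0$. The mass bound $\sup_x \M(\Phi^{\Sigma}(x))=V(\Sigma)$ is exactly property $(b)$ of Proposition \ref{existence of good sweepout}, which says $\mH^n(\Sigma_t)\le V(\Sigma)$ with equality at $t=0$.

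For the non-orientable case $\Sigma\in\mS_-$, the family $\{\Sigma_t\}_{t\in[0,1]}$ from Proposition \ref{existence of good sweepout2} satisfies (s1)(s2)(s3)(sw1)(sw2)(sw3) away from $t=0$; explicitly, on any subinterval $[\epsilon,1]$ with $\epsilon>0$, Proposition \ref{continuity and no mass concentration} applies verbatim and yields both flat continuity and vanishing $\m$-modulus on that subinterval (uniformly in $p\in M$). The mass bound $\sup_x \M(\Phi^{\Sigma}(x))=2V(\Sigma)$ is property $(c)$ of Proposition \ref{existence of good sweepout2}. So the only substantive point is continuity and absence of mass concentration as $x\to 0^+$.

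The flat continuity at $x=0$ is easy: since $\Omega_0=\emptyset$, property (sw2) gives $\mathrm{Volume}(\Omega_x)\to 0$ as $x\to 0^+$, whence
\[
\F(\Phi^{\Sigma}(x),0)=\F([[\partial\Omega_x]],0)\le \M([[\Omega_x]])=\mathrm{Volume}(\Omega_x)\longrightarrow 0.
\]
For $\m(\Phi^{\Sigma},r)\to 0$ we must bound $\|\Sigma_x\|(B(p,r))$ uniformly in $p$ and for $x$ close to $0$. Here the key input is property $(e)$ of Proposition \ref{existence of good sweepout2}: $\Sigma_x\to 2\Sigma$ as $x\to 0^+$. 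Since $\Sigma$ is smooth and $M$ is compact, a standard point-set argument (identical to the second lemma inside the proof of Proposition \ref{continuity and no mass concentration}, applied with $\Sigma$ replacing the one-parameter family) gives $\sup_{p\in M}\|\Sigma\|(B(p,r))\to 0$ as $r\to 0$. Pick $\epsilon>0$ so small that $2\|\Sigma\|(B(p,r))<\eta/2$ uniformly for $r\le r_0$, and so small that for all $x\in(0,\epsilon]$ one has $\|\Sigma_x\|(B(p,r))\le 2\|\Sigma\|(B(p,r))+\eta/2$ (using varifold convergence $\Sigma_x\to 2\Sigma$ together with the same cutting argument used to prove continuity in ``$x$'' in the proof of Proposition \ref{continuity and no mass concentration}). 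Combine with the already established uniform bound on $[\epsilon,1]$ to conclude $\m(\Phi^{\Sigma},r)\to 0$.

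The only nontrivial obstacle is this last step, and it is mild: the family in the non-orientable case is only discontinuous as a varifold at a single boundary point of the parameter interval, and the varifold limit is $2\Sigma$, itself a smooth embedded hypersurface with multiplicity, so no mass concentrates on any small ball uniformly. Everything else is bookkeeping.
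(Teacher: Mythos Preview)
Your argument is correct. For the orientable case you match the paper exactly, and for the non-orientable case your treatment of flat continuity and the mass bound coincides with the paper's. The only place you diverge is in the proof of $(c)$ when $\Si\in\mS_{-}$.

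The paper handles $(c)$ by lifting to the double cover: since the family $\{\Si_{t}\}_{t\in(0,1]}$ in $M$ is by construction the image under the covering map $\pi:\ti{M}\to M$ of the restriction $\{\ti{\Si}_{t}\}_{t\in(0,1]}$ of a genuine sweepout of $\ti{M}$, and $\pi$ is a two-sheeted local isometry, one has $\m(\Phi^{\Si},r)\leq 2\,\m(\ti{\Phi}^{\ti{\Si}},r)$; the right-hand side tends to zero by the already-established orientable case applied to $\ti{\Si}\subset\ti{M}$. This bypasses entirely the need to analyze the endpoint $t=0$ directly. Your route instead confronts the endpoint head-on, using the varifold convergence $\Si_{x}\to 2\Si$ and adapting the continuity-in-$x$ argument from the proof of Proposition~\ref{continuity and no mass concentration} (with the smooth limit $2\Si$ playing the role of $\Si_{0}$, and with no singular set $P_{0}$ since $\Si$ is smooth). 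This works, but the uniformity in $p$ and $r$ that you invoke is not quite ``the same cutting argument'' verbatim: in Proposition~\ref{continuity and no mass concentration} the uniformity came from compactness of the full parameter space and joint continuity of $f(r,p,x)$, so you are implicitly extending $f$ continuously to $x=0$ by setting $f(r,p,0)=2\|\Si\|(B(p,r))$ and re-running the compactness argument. That extension is legitimate because $\Si_{x}$ is locally a two-sheeted smooth graph over $\Si$ converging smoothly, but it is worth making explicit. The paper's double-cover trick is shorter and avoids this bookkeeping; your approach has the minor advantage of staying entirely within $M$.
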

\begin{proof}
In the case $\Si\in\mS_{+}$, our conclusions are a direct consequence of Proposition \ref{continuity and no mass concentration}, as $\Phi^{\Si}$ satisfies the conditions there.

If $\Si\in\mS_{-}$, all the conclusions are true by Proposition \ref{existence of good sweepout2} and the proof of Proposition \ref{continuity and no mass concentration}, except that we need to check $(c)$. Using notions in Proposition \ref{existence of good sweepout2}, let $\ti{M}$ and $\ti{\Si}$ be the double cover of $M$ and $\Si$ respectively. Let $\ti{\Phi}^{\ti{\Si}}$ be the mapping corresponding to $\ti{\Si}$ in $\ti{M}$, then it is easy to see that $\m(\Phi^{\Si}, r)\leq 2\m(\ti{\Phi}^{\ti{\Si}}, r)$, hence we finish the proof by using the first case.
\end{proof}

\subsection{Discretize the min-max family}\label{Discretize the min-max family}

Now we will discretize the continuous family $\Phi^{\Si}$ to a $(1, \M)$-homotopy sequence as in Definition \ref{(1, M) homotopy sequence}. The idea originates from Pitts in \cite[\S 3.7 and \S 3.8]{P}. Marques and Neves first gave a complete statement in \cite[\S 13]{MN2} on generating $(m, \M)$-homotopy sequence into the space $\Z_{2}(M^{3})$ of integral two cycles in three manifold from a given min-max family continuous under the flat norm topology. Their proof never used any special feature for the special dimensions, so Theorem 13.1 in \cite{MN2} is still true to generate $(m, \M)$-homotopy sequence into $\Z_{n}(M^{n+1})$ from any continuous family under flat topology. While they used a contradiction argument, for the purpose of the proof of Theorem \ref{identification of (1, M) homotopy class}, we will give a modified direct discretization process based on ideas in \cite{P}\cite{MN2}. Our main result is an adaption of Theorem 13.1 in \cite{MN2}.
 
\begin{theorem}\label{generating (1, M) homotopy sequence}
Given a continuous mapping $\Phi: [0, 1]\rightarrow\big(\Z_{n}(M^{n+1}, \F), \{0\}\big)$, with
$$\sup_{x\in[0, 1]}\M(\Phi(x))<\infty, \textrm{ and } \lim_{r\rightarrow 0}\m(\Phi, r)=0,$$
there exists a $(1, \M)$ homotopy sequence
$$\phi_{i}: I(1, k_{i})_{0}\rightarrow \big(\Z_{n}(M^{n+1}, \M), \{0\}\big),$$
and a sequence
$$\psi_{i}: I(1, k_{i})_{0}\times I(1, k_{i})_{0}\rightarrow\Z_{n}(M^{n+1}, \M),$$
with $k_{i}<k_{i+1}$, and $\{\de_{i}\}_{i\in\N}$ with $\de_{i}>0$, $\de_{i}\rightarrow 0$, and $\{l_{i}\}_{i\in\N}$, $l_{i}\in\N$ with $l_{i}\rightarrow\infty$, such that $\psi_{i}([0], \cdot)=\phi_{i}$, $\psi_{i}([1], \cdot)=\phi_{i+1}|_{I(1, k_{i})_{0}}$, and
\begin{itemize}
\vspace{-5pt}
\setlength{\itemindent}{1em}
\addtolength{\itemsep}{-0.7em}
\item[$(1)$] $\M\big(\phi_{i}(x)\big)\leq \sup\big\{\M\big(\Phi(y)\big):\ x, y\in\al, \textrm{ for some 1-cell }\al\in I(1, l_{i})\big\}+\de_{i}$, hence
\begin{equation}
\bL(\{\phi_{i}\}_{i\in\N})\leq\sup_{x\in[0, 1]}\big(\Phi(x)\big);
\end{equation}
\item[$(2)$] $\f(\psi_{i})<\de_{i}$;
\item[$(3)$] $\sup\big\{\F\big(\psi_{i}(y, x)-\Phi(x):\ y\in I(1, k_{i})_{0}, x\in I(1, k_{i})_{0}\big)\big\}<\de_{i}$.
\end{itemize}
\end{theorem}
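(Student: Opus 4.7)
The proof follows the template developed by Pitts in \cite[\S 3.7--3.8]{P} and refined by Marques--Neves in \cite[\S 13]{MN2}; my plan is to adapt their arguments almost verbatim, since nothing in them is specific to the ambient dimension. The central tension is that $\Phi$ is continuous only in the flat topology, whereas the target $(1,\M)$-homotopy sequence must be fine in the strictly stronger mass norm. The hypothesis $\lim_{r\to 0}\m(\Phi,r)=0$ is precisely what closes this gap.

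\textbf{Step 1 (discretization of $\Phi$ on a fine grid).} Using uniform continuity of $\Phi$ on $[0,1]$ in the flat norm, for each $i$ I choose $l_i\in\N$ large enough that $\F(\Phi(x)-\Phi(y))<\eta_i$ whenever $x,y$ lie in a common $1$-cell of $I(1,l_i)$, where $\eta_i\to 0$ is to be calibrated from $\de_i$. I will also pick $r_i\to 0$ with $\m(\Phi,r_i)<\eta_i$, which is available by the no-concentration hypothesis. Restricting $\Phi$ to $I(1,l_i)_0$ already gives the correct mass bound (1) up to $\eta_i$; only the mass-fineness has to be manufactured.

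\textbf{Step 2 (key interpolation lemma).} The engine is the following quantitative statement, essentially due to Almgren--Pitts: given $L,\ep>0$, there exist $\eta,r>0$ and $N\in\N$ such that whenever $T_0,T_1\in\Z_n(M)$ satisfy $\M(T_j)\le L$, $\sup_{p\in M}\|T_j\|(B(p,r))<\eta$, and $\F(T_0-T_1)<\eta$, there is a chain $T_0=S_0,S_1,\dots,S_N=T_1$ in $\Z_n(M)$ with $\M(S_{k+1}-S_k)<\ep$ and $\M(S_k)\le\max(\M(T_0),\M(T_1))+\ep$. The proof I will follow uses the isoperimetric inequality in $M$ to write $T_1-T_0=\partial Q$ with $\M(Q)\le C\,\F(T_1-T_0)$, then slices $Q$ via a fine cubical cover of $M$, pushing the error produced on each cube's boundary into a perturbation whose mass is controlled by the no-concentration bound.

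\textbf{Step 3 (assembly of $\phi_i$ and $\psi_i$).} I apply Step 2 to each pair of consecutive values $\Phi(j/3^{l_i}),\Phi((j+1)/3^{l_i})$ with $\ep=\de_i$, producing intermediate cycles that constitute the values of $\phi_i$ on a finer subdivision $I(1,k_i)_0$, with $k_i>l_i$ large enough to accommodate the chain lengths $N$ from Step 2. By construction $\phi_i([0])=\phi_i([1])=0$, the mass bound (1) is inherited from the ``$+\ep$'' in Step 2 (combined with the fact that every new value lies in the chain between two $\Phi$-values sharing a $1$-cell of $I(1,l_i)$), and $\F(\phi_i(x)-\Phi(x))=O(\eta_i)$ because every interpolating step has small mass and hence small flat norm. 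For $\psi_i$, I run the same interpolation on the two-dimensional grid $I(1,k_i)_0\times I(1,k_i)_0$: the $[0]$-slice is $\phi_i$, the $[1]$-slice is $\phi_{i+1}|_{I(1,k_i)_0}$, and intermediate slices are obtained by further applying Step 2 to pairs that are $\F$-close (this is automatic because both $\phi_i$ and $\phi_{i+1}$ flat-approximate $\Phi$). Choosing $k_i$ large enough yields $\mf(\psi_i)<\de_i$, which is (2), while (3) follows since each $\psi_i(y,x)$ lies in a Step-$2$-chain between two $\phi_i$-values, each of which is within $O(\eta_i)$ of $\Phi(x)$ in flat norm.

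\textbf{Main obstacle.} The hard part is Step 2: converting flat closeness into a genuinely mass-fine discrete path while retaining control on the maximum mass along that path. Filling via the isoperimetric inequality produces $Q$ with small total mass, but slicing $Q$ naively can create intermediate cycles whose mass jumps by $\M(T_0)$ or $\M(T_1)$ at points where $T_0,T_1$ have concentrated mass. This is exactly where the hypothesis $\lim_{r\to 0}\m(\Phi,r)=0$ is essential, and it is also the step in which I must be most careful with constants to ensure the resulting $\de_i$ can be driven to $0$. Since the argument in \cite[\S 13]{MN2} is dimension-independent and uses only the isoperimetric inequality in $M^{n+1}$ and standard slicing theory from \cite{S}, I expect the adaptation to be essentially cosmetic.
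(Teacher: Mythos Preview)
Your proposal is correct and follows essentially the same Pitts/Marques--Neves approach as the paper: fill flat-close pairs by an isoperimetric $(n{+}1)$-current and slice it by a finite collection of small disjoint balls (located via a varifold-compactness argument), with the no-concentration hypothesis $\m(\Phi,r)\to 0$ controlling the per-ball mass so that the resulting chain is mass-fine without overshooting $\max\M$. The one organizational difference is that the paper first takes a finite flat-norm cover of $\{\M\le 2L\}$ and routes each interpolation through a fixed center $T_{i(\alpha)}$ via the explicit formula~(\ref{extension map}); this extra bookkeeping is not needed for the present theorem but is what allows the Almgren isomorphism to be computed directly in the subsequent Theorem~\ref{identification of (1, M) homotopy class}, so if you interpolate between consecutive $\Phi$-values directly you should keep track of the isoperimetric pieces $Q\lc B_{r_a}(p_a)$ explicitly for that purpose.
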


Before giving the proof, we first give a result which is a variation of \cite[Lemma 3.8]{P} and \cite[ Proposition 13.3]{MN2}. For completeness and for the purpose of application to the proof of Theorem \ref{identification of (1, M) homotopy class}, we will give a slightly modified sketchy proof. Denote $\B^{\F}_{\ep}(S)$ to be a ball of radius $\ep$ centered at $S$ in $\Z_{n}(M^{n+1}, \F)$.

\begin{lemma}\label{extension lemma}
Given $\de$, $r$, $L$ positive real numbers, and $T\in\Z_{n}(M^{n+1})\cap\{S: \M(S)\leq 2L\}$, there exists  $0<\ep=\ep(T, \de, r, L)<\de$, and $k=k(T, \de, r, L)\in\N$, such that whenever $S\in\B^{\F}_{\ep}(T)\cap\{S:\ \M(S)\leq 2L\}$, and $\m(S, r)<\frac{\de}{4}$, there exists a mapping $\ti{\phi}: I(1, k)_{0}\rightarrow\B^{\F}_{\ep}(T)$, satisfying
\begin{itemize}
\vspace{-5pt}
\setlength{\itemindent}{1em}
\addtolength{\itemsep}{-0.7em}
\item[$(i)$] $\ti{\phi}([0])=S$, $\ti{\phi}([1])=T$;
\item[$(ii)$] $\f(\ti{\phi})\leq\de$;
\item[$(iii)$] $\sup_{x\in I(1, k)_{0}}\ti{\phi}([x])\leq\M(S)+\de$.
\end{itemize}
\end{lemma}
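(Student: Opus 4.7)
The plan is to follow the piecewise discretization strategy of Pitts \cite[Lemma 3.8]{P} and Marques--Neves \cite[Proposition 13.3]{MN2}: first write the difference $S-T$ as a boundary in the flat sense, then interpolate piece by piece through a fine cubical decomposition of $M$. By the definition of the flat norm together with $\F(S-T)<\ep$, we can decompose $S-T=\partial P+R$ with $P\in\bI_{n+1}(M)$, $R\in\bI_{n}(M)$, and $\M(P)+\M(R)<\ep$. I would choose $\ep=\ep(T,\de,r,L)$ very small compared with $\de$, and also small relative to the fixed current $T$ so that slicing contributions coming from $T$ in the construction below are uniformly small.

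Next I would fix a smooth cubical decomposition of $M$ into cells $A_{1},\dots,A_{N}$ of diameter less than $r$, generically translated so that the slices $P\lc\partial A_{j}$, $S\lc\partial A_{j}$, and $T\lc\partial A_{j}$ are rectifiable integral currents whose total masses are controlled via the coarea formula by $\M(P)<\ep$, the non-concentration bound $\m(S,r)<\de/4$, and (by choosing $r$ depending on $T$) an analogous non-concentration bound for $T$. Setting $S_{0}:=T$ and
$$S_{j}:=T+\partial\Bigl(\sum_{i\le j}P\lc A_{i}\Bigr)+\sum_{i\le j}R\lc A_{i},\qquad 1\le j\le N,$$
we have $S_{N}=S$, while each consecutive difference $S_{j}-S_{j-1}=\partial(P\lc A_{j})+R\lc A_{j}$ has mass bounded by $\M((\partial P)\lc A_{j})+\M(P\lc\partial A_{j})+\M(R\lc A_{j})$. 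Choosing $k\in\N$ with $3^{k}\ge 2N$, I would place $S_{0},S_{1},\dots,S_{N}$ at equally spaced vertices of $I(1,k)_{0}$, repeat the endpoints on the remaining vertices, and reparametrize so that $\ti{\phi}([0])=S$, $\ti{\phi}([1])=T$. Summing the consecutive-difference bounds yields $\f(\ti{\phi})\le C(\M(P)+\M(R))\le C\ep\le\de$, proving $(i)$ and $(ii)$, and (after shrinking $\ep$ further) ensuring the image stays in $\B^{\F}_{\ep}(T)$.

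The hard step is the mass bound $(iii)$. The naive estimate $\M(S_{j})\le\M(T)+\sum_{i\le j}\M(\partial(P\lc A_{i}))$ is too weak, since $\M(T)$ can be as large as $2L$ while we need $\M(S)+\de$. Instead I would use the alternative representation $S_{j}=S-\partial\bigl(\sum_{i>j}P\lc A_{i}\bigr)-\sum_{i>j}R\lc A_{i}$ and bound
$$\M(S_{j})\le\M(S)+\sum_{i>j}\bigl(\M(P\lc\partial A_{i})+\M((\partial P)\lc A_{i})+\M(R\lc A_{i})\bigr).$$
The essential task is then to control the slicing error $\sum_{i}\M(P\lc\partial A_{i})$: for a generic grid translation this is bounded by $C(r)\M(P)<C(r)\ep$, hence $<\de/2$ once $\ep$ is chosen small depending on $r$. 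The non-concentration hypothesis $\m(S,r)<\de/4$ enters crucially here, via the identity $(\partial P)\lc A_{j}=(S-T-R)\lc A_{j}$, to ensure that no single cell $A_{j}$ carries more than $\de/4$ of mass from $S$, with the analogous estimate for $T$ built into the choice of $r$. This is the main obstacle, and its resolution explains why $\ep$ and $k$ must depend on all of $T$, $\de$, $r$, and $L$.
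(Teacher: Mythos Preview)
Your overall strategy---write $S-T$ as a boundary and interpolate cell by cell---is the right one, but the proposal has a genuine gap in the mass estimate $(iii)$, together with some smaller issues.

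\textbf{The main gap.} In your bound
\[
\M(S_{j})\le\M(S)+\sum_{i>j}\Bigl(\M(P\lc\partial A_{i})+\M\bigl((\partial P)\lc A_{i}\bigr)+\M(R\lc A_{i})\Bigr),
\]
the dangerous term is $\sum_{i>j}\M\bigl((\partial P)\lc A_{i}\bigr)=\|S-T-R\|\bigl(\bigcup_{i>j}A_{i}\bigr)$, which can be as large as $\M(S)+\M(T)+\M(R)\le 4L+\ep$. The non-concentration hypothesis $\m(S,r)<\de/4$ bounds $\|S\|(A_{i})$ for a \emph{single} cell, not the union of many cells, so it does not make this sum small. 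If instead you exploit the cancellation directly---writing $S_{j}=S\lc\bigl(M\setminus U_{j}\bigr)+T\lc U_{j}+(\text{slice terms})$ with $U_{j}=\bigcup_{i>j}A_{i}$---the requirement becomes $(\|T\|-\|S\|)(U_{j})\le\de/2$ for every $j$. For a \emph{fixed} cubical grid chosen independently of $S$ there is no reason this should hold uniformly over $S\in\B^{\F}_{\ep}(T)$: flat closeness does not control the signed measure $\|T\|-\|S\|$ on prescribed sets.

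The paper resolves this by allowing the cells to be disjoint balls $\{B_{r_{i}}(p_{i})\}_{i=1}^{v}$ that are chosen \emph{depending on $S$}, while keeping the number $v$ (hence $k$) uniform. The required properties---in particular $(\|T\|-\|S\|)\bigl(B_{r_{i}}(p_{i})\bigr)\le\de/(2v)$ for every ball and for the complement---are obtained by a contradiction argument: if no uniform $\ep$, $v$ worked, one would have $S_{j}\to T$ in flat norm, pass to a varifold limit $V=\lim_{j}|S_{j}|$, and then build the balls adapted to $V$ (as in \cite[Lemma 3.8]{P} and \cite[Lemma 13.4]{MN2}), contradicting the assumption. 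This adaptive choice, not a fixed grid, is what makes $(iii)$ go through.

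\textbf{Smaller points.} First, $r$ is given in the hypotheses, so you cannot ``choose $r$ depending on $T$''; what you may do is refine the cell diameter below $r$ so that $\|T\|$ is small on each cell. Second, since $S,T\in\Z_{n}(M)$, Almgren's isoperimetric lemma \cite[Corollary 1.14]{A1} gives $S-T=\partial Q$ with $\M(Q)=\F(S-T)$ and no $R$ term; using your general decomposition with $R\neq 0$ would leave $\partial S_{j}=\pm R\lc\partial U_{j}$, so the intermediate currents need not be cycles. Third, your fineness claim ``$\f(\ti{\phi})\le C(\M(P)+\M(R))\le C\ep$'' is not correct: each consecutive difference is bounded by $\|S\|(A_{j})+\|T\|(A_{j})+(\text{slice and }R\text{ terms})$, which is controlled by the non-concentration of $S$ and of $T$ (giving roughly $\de/4+\de/4+O(\ep)$), not by $\ep$ alone.
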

\begin{proof}
By \cite[Corollary 1.14]{A1}, there exists $\ep_{M}>0$, such that if $\ep<\ep_{M}$, there exists $Q\in\bI_{n+1}(M^{n+1})$, such that
$$\partial Q=S-T,\quad \M(Q)=\F(S-T)<\ep.$$

We claim that there exists $\ep=\ep(T, \de, r, L)>0$ small enough and $v=v(T, \de, r, L)\in\N$ large enough, such that for any $S\in\B^{\F}_{\ep}(T)\cap\{S:\ \M(S)\leq 2L\}$, there exists a finite collection of disjoint balls $\{B_{r_{i}}(p_{i})\}_{i=1}^{v}$ with $r_{i}<r$, satisfying:
\begin{itemize}
\vspace{-5pt}
\addtolength{\itemsep}{-0.7em}
\item \begin{equation}\label{extension lemma equation 1}
\|S\|\big(B_{r_{i}}(p_{i})\big)\leq\frac{\de}{4},\ \|S\|\big(M\setminus \cup_{i=1}^{v}B_{r_{i}}(p_{i})\big)\leq\frac{\de}{4};
\end{equation}
\item \begin{equation}\label{extension lemma equation 2}
\|T\|\big(B_{r_{i}}(p_{i})\big)\leq\frac{\de}{3},\ \|T\|\big(M\setminus \cup_{i=1}^{v}B_{r_{i}}(p_{i})\big)\leq\frac{\de}{3};
\end{equation}
\item \begin{equation}\label{extension lemma equation 3}
(\|T\|-\|S\|)(B_{r_{i}}(p_{i}))\leq\frac{\de}{2v},\ (\|T\|-\|S\|)(M\setminus \cup_{i=1}^{v}B_{r_{i}}(p_{i}))\leq\frac{\de}{2v}.
\end{equation}
\item Denoting $d_{i}(x)=d(x, p_{i})$, the slice\footnote{See \cite[\S 28]{S} for definition of slices.} $\lan Q, d_{i}, r_{i}\ran\in\bI_{n}(M^{n+1})$, and
\begin{equation}\label{extension lemma equation 4}
\lan Q, d_{i}, r_{i}\ran=\partial\big(Q\lc B_{r_{i}}(p_{i})\big)-(\partial Q)\lc B_{r_{i}}(p_{i})=\partial\big(Q\lc B_{r_{i}}(p_{i})\big)-(S-T)\lc B_{r_{i}}(p_{i});
\end{equation}
\item \begin{equation}
\sum_{i=1}^{v}\M\big(\lan Q, d_{i}, r_{i}\ran\big)<\frac{\de}{2}.
\end{equation}
\end{itemize}
This claim follows from a contradiction argument. If it is not true, then there is a sequence $\ep_{j}\rightarrow 0$, and $S_{j}\in\B^{\F}_{\ep_{i}}(T)\cap\{S:\ \M(S)\leq 2L\}$, such that there exists no finite collection of disjoint balls satisfying the above properties. Then $\lim_{j\rightarrow\infty}S_{j}=T$, and weak compactness of varifolds with bounded mass implies that $\lim_{j\rightarrow\infty}|S_{j}|=V\in\V_{n}(M^{n+1})$ for some subsequence. Using the arguments in the proof of \cite[Lemma 13.4]{MN2} and \cite[Lemma 3.8]{P}, we can construct finite collection of disjoint balls satisfying the above requirement for each $S_{j}$ when $j$ is large enough, hence a contradiction. Notice that the condition $\m(S, r)<\frac{\de}{4}$ is essentially used to find the radius of the balls (see Lemma 13.4 in \cite{MN2} for details).

Define the map $\ti{\phi}: I(1, k)_{0}\rightarrow\Z_{n}(M^{n+1})$, with $k=N$, where we write $v=3^{N}-1$ for some $N\in\N$,  as follows:
\begin{equation}\label{extension map}
\begin{split}
& \ti{\phi}([\frac{i}{3^{N}}])=S-\sum_{a=1}^{i}\partial\big(Q\lc B_{r_{a}}(p_{a})\big),\ 0\leq i\leq 3^{N}-1;\\
& \ti{\phi}([1])=T.
\end{split}
\end{equation}
By arguments similar to \cite[Lemma 13.4]{MN2}, we can check that $\ti{\phi}(I(1, k)_{0})\subset\B^{\F}_{\ep}(T)$, and get the properties $(i)(ii)(iii)$ listed in the lemma using (\ref{extension lemma equation 1})(\ref{extension lemma equation 2})(\ref{extension lemma equation 3})(\ref{extension lemma equation 4}).
\end{proof}
\begin{remark}
In the proof of \cite[Lemma 13.4]{MN2} and \cite[Lemma 3.8]{P}, they used contradiction arguments to get the discretized maps, while we use contradiction arguments to get the good collection of balls.
\end{remark}

Now let us sketch the proof of Theorem \ref{generating (1, M) homotopy sequence}. Since the idea is the same as \cite[Lemma 13.1]{MN2}, we will mainly point out the ingredients which we will use in the following.
\begin{proof}
(of Theorem \ref{generating (1, M) homotopy sequence})
Fix a small $\de>0$. Let $L=\sup_{x\in[0, 1]}\M\big(\Phi(x)\big)$, and find $r>0$, such that $\m(\Phi, r)<\frac{\de}{4}$. By the compactness of $\Z_{n}(M^{n+1})\cap\{S:\ \M(S)\leq 2L\}$ under flat norm topology, we can find a finite cover of $\Z_{n}(M^{n+1})\cap\{S:\ \M(S)\leq 2L\}$, containing $\{\B^{\F}_{\ep_{i}}(T_{i}):\ i=1, \cdots, N\}$, with
$$T_{i}\in \Z_{n}(M^{n+1})\cap\{S:\ \M(S)\leq 2L\},\ \ep_{i}=\frac{\ep(T_{i}, \de, r, L)}{8},$$
where $\ep(T_{i}, \de, r, L)$ and $k_{i}=k(T_{i}, \de, r, L)$ are given by Lemma \ref{extension lemma}.

By the continuity of $\Phi$, we can take $j_{\de}\in\N$ large enough, such that for any $1$-cell $\al\in I(1, j_{\de})$, $\Phi(\al_{0})\subset\B^{\F}_{\ep_{i(\al)}}(T_{i(\al)})$ for some $i(\al)$ depending on $\al$. 

Now fix a $1$-cell $\al\in I(1, j_{\de})$, with $\al=[t^{1}_{\al}, t^{2}_{\al}]$. Then $\Phi(t^{l}_{\al})\in\B^{\F}_{\ep_{i(\al)}}(T_{i(\al)})$, and $\m\big(\Phi(t^{l}_{\al}), r\big)<\frac{\de}{4}$, for $l=1, 2$. By Lemma \ref{extension lemma}, there exists $\ti{\phi}^{l}_{\al}: I(1, k_{i})_{0}\rightarrow\B^{\F}_{\ep_{i(\al)}}(T_{i(\al)})$, such that: $\ti{\phi}^{l}_{\al}([0])=\Phi(t^{l}_{\al})$, $\ti{\phi}^{l}_{\al}([1])=T_{i(\al)}$, $\f(\ti{\phi}^{l}_{\al})\leq\de$, and $\sup\{\M\big(\ti{\phi}^{l}_{\al}(x)\big):\ x\in I(1, k_{i})_{0}\}\leq\M\big(\Phi(t^{l}_{\al})\big)+\de$.

By identifying $\al$ with $[0, 1]$, we can define $\ti{\phi}_{\al}: \al(k_{i}+1)_{0}\rightarrow\B^{\F}_{\ep_{i(\al)}}(T_{i(\al)})$ as follows:
\begin{equation}\label{construction of phi-al}
\left. \ti{\phi}_{\al}([\frac{j}{3^{k_{i}+1}}])= \Bigg\{ \begin{array}{ll}
\ti{\phi}^{1}_{\al}([\frac{j}{3^{k_{i}+1}}]), \quad \textrm{ if $j=0, \cdots, 3^{k_{i}}$};\\
T_{i(\al)}, \quad \textrm{ if $j=3^{k_{i}}, \cdots, 2\cdot 3^{k_{i}}$};\\
\ti{\phi}^{2}_{\al}([\frac{\cdot 3^{k_{i}+1}-j}{3^{k_{i}+1}}]), \quad \textrm{ if $j=2\cdot 3^{k_{i}}, \cdots, 3^{k_{i}+1}$}.
\end{array}\right. 
\end{equation}
Then for $k_{\de}=\max_{i=1}^{N}\{k_{i}\}$, we can define: $\phi_{\de}: I(1, j_{\de}+k_{\de}+1)_{0}\rightarrow \Z_{n}(M^{n+1})$ as follows:
\begin{equation}\label{construction of phi-de}
\phi_{\de}|_{\al(k_{\de}+1)_{0}}=\ti{\phi}_{\al}\circ n(k_{\de}+1, k_{i}+1),\ \textrm{ for any $1$-cell $\al\in I(1, j_{\de})$},
\end{equation}
where $n(i, j)$ is as in $(6)$ of Definition \ref{cell complex}. From Lemma \ref{extension lemma}, we know that: $\phi_{\de}|_{I(1, j_{\de})_{0}}=\Phi|_{I(1, j_{\de})_{0}}$, $\f(\phi_{\de})\leq\sup_{\al\in I(1, j_{\de})_{1}}\f(\ti{\phi}_{\al})\leq\de$, and
$$\M\big(\phi_{\de}(x)\big)\leq \sup\{\M\big(\Phi(y)\big): y, x\in\al, \textrm { for some $1$-cell $\al\in I(1, j_{\de})$}\}.$$

Now take a sequence of positive numbers $\{\de_{i}\}_{i\in\N}$, with $\de_{i}\rightarrow 0$ as $i\rightarrow\infty$. Construct $\phi_{i}=\phi_{\de_{i}}: I(1, j_{\de_{i}}+k_{\de_{i}}+1)_{0}\rightarrow\Z_{n}(M^{n+1})$ as above. By taking a subsequence, we can construct the sequence of $1$-homotopy $\{\psi_{i}\}_{i\in\N}$ as in the second part of \cite[Theorem 13.1]{MN2}. The properties $(1)(2)(3)$ listed in the theorem follow from the arguments there.
\end{proof}

In order to prove the final result, we need to show that the $(1, \M)$-homotopy sequences of mappings into $\big(\Z_{n}(M^{n+1}), \{0\}\big)$, which are constructed above from the mapping $\Phi^{\Si}$ in Corollary \ref{generating min-max family in flat topology} for any $\Si\in\mS$, belong to the same homotopy class in $\pi^{\#}_{1}\big(\Z_{n}(M^{n+1}), \{0\}\big)$. Similar issue was considered in the proof of \cite[Theorem 8.4]{MN2}. However, they only need to show that their sequence is non-trivial, while we need to identify all our sequences. First we have the following theorem.

\begin{theorem}\label{identification of (1, M) homotopy class}
Given $\Phi$ as in Theorem \ref{generating (1, M) homotopy sequence}, and $\{\phi_{i}\}_{i\in\N}$ the corresponding $(1, \M)$-homotopy sequence obtained by Theorem \ref{generating (1, M) homotopy sequence}. Assume that $\Phi(x)=[[\partial\Om_{x}]]$, $x\in[0, 1]$ where $\{\Om_{t}\}_{t\in[0, 1]}$ is a family of open sets satisfying $(sw2)(sw3)$ in Definition \ref{definition of sweepout}. If $F: \pi_{1}^{\#}\big(\Z_{n}(M^{n+1}), \{0\}\big)\rightarrow H_{n+1}(M^{n+1}, \mZ)$ is the isomorphism given by Almgren in Section 3.2 in \cite{A1}, then
$$F\big([\{\phi_{i}\}_{i\in\N}]\big)=[[M]],$$
where $[[M]]$ is the fundamental class of $M$. 
\end{theorem}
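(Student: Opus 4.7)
My plan is to unwrap Almgren's construction of $F$ and directly match the discretized filling of $\{\phi_{i}\}$ with the natural filling supplied by the open sets $\{\Om_{t}\}$, whose telescoping sum is $[[M]]$. Recall how $F([\{\phi_{i}\}])$ is computed \cite[\S 3.2]{A1}: for $i$ large enough that $\de_{i} := \mf(\phi_{i})$ lies below Almgren's isoperimetric threshold, \cite[Corollary 1.14]{A1} supplies, for each $1$-cell $\al = [t^{1}_{\al}, t^{2}_{\al}] \in I(1, k_{i})_{1}$, a chain $Q^{AP}_{\al} \in \bI_{n+1}(M)$ with $\partial Q^{AP}_{\al} = \phi_{i}(t^{2}_{\al}) - \phi_{i}(t^{1}_{\al})$ and $\M(Q^{AP}_{\al}) \leq C\,\F(\phi_{i}(t^{2}_{\al}) - \phi_{i}(t^{1}_{\al}))$. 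The sum $T_{i} := \sum_{\al} Q^{AP}_{\al}$ satisfies $\partial T_{i} = \phi_{i}([1]) - \phi_{i}([0]) = 0$, so $T_{i} \in \Z_{n+1}(M) = \mZ\cdot[[M]]$, and $[T_{i}] = F([\{\phi_{i}\}])$ by Almgren's definition.

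Paralleling this, I build the natural filling from the sweepout: for each edge $\al$ set $Q^{\Phi}_{\al} := [[\Om_{t^{2}_{\al}}]] - [[\Om_{t^{1}_{\al}}]]$, so that $\partial Q^{\Phi}_{\al} = \Phi(t^{2}_{\al}) - \Phi(t^{1}_{\al})$ and $\M(Q^{\Phi}_{\al}) = \mathrm{Vol}(\Om_{t^{2}_{\al}} \De \Om_{t^{1}_{\al}})$ is small by (sw2). Telescoping gives $\sum_{\al} Q^{\Phi}_{\al} = [[\Om_{1}]] - [[\Om_{0}]] = [[M]]$ by (sw3), so the natural filling already represents the fundamental class on the nose.

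To compare the two, I bridge at each vertex using property (3) of Theorem \ref{generating (1, M) homotopy sequence}: $\F(\phi_{i}(t) - \Phi(t)) < \de_{i}$ for every $t \in I(1, k_{i})_{0}$, and Almgren's isoperimetric lemma provides $R_{t} \in \bI_{n+1}(M)$ with $\partial R_{t} = \phi_{i}(t) - \Phi(t)$ and $\M(R_{t}) \leq C\de_{i}$; at $t = [0], [1]$ both $\phi_{i}$ and $\Phi$ vanish, so $R_{[0]} = R_{[1]} = 0$. Then
\begin{equation*}
E_{\al} := Q^{AP}_{\al} - Q^{\Phi}_{\al} - R_{t^{2}_{\al}} + R_{t^{1}_{\al}}
\end{equation*}
is closed, hence $E_{\al} = m_{\al}[[M]]$ with $m_{\al} \in \mZ$. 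The estimate $\M(E_{\al}) \leq \M(Q^{AP}_{\al}) + \M(Q^{\Phi}_{\al}) + 2C\de_{i}$ can be made strictly less than $V(M)$ uniformly in $\al$ by taking $k_{i}$ large enough (so $\M(Q^{\Phi}_{\al})$ is small by (sw2), and $\M(Q^{AP}_{\al}) \lesssim \de_{i}\cdot 3^{-k_{i}}$ is negligible) and $\de_{i}$ small; since every nonzero integral multiple of $[[M]]$ has mass $\geq V(M)$, this forces $m_{\al} = 0$. Summing then collapses the $R$-terms and yields $T_{i} = \sum_{\al} Q^{\Phi}_{\al} = [[M]]$.

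The main technical obstacle is calibrating scales so that every edge-error $E_{\al}$ sits strictly below mass $V(M)$: $\M(Q^{AP}_{\al})$ and $\M(R_{t})$ decay automatically with $\de_{i}$, but $\M(Q^{\Phi}_{\al})$ requires refining $k_{i}$ beyond the modulus of volume-continuity (sw2) of $\{\Om_{t}\}$. Since Almgren's homotopy class is invariant under passage to a homotopic subsequence and Theorem \ref{generating (1, M) homotopy sequence} permits $k_{i} \to \infty$ freely, this calibration is readily achievable.
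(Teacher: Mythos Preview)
Your argument is correct, and in fact cleaner than the paper's. The paper does not use property~(3) of Theorem~\ref{generating (1, M) homotopy sequence} at all; instead it goes back into the explicit construction of $\phi_{\de}$ carried out in the proof of that theorem (the ball--decomposition of Lemma~\ref{extension lemma}), exploits the exact identity $\phi_{\de}|_{I(1,j_{\de})_{0}}=\Phi|_{I(1,j_{\de})_{0}}$ at the \emph{coarse} grid, and then shows that for each coarse $1$-cell $\al\in I(1,j_{\de})$ the sum of isoperimetric choices over the fine subcells of $\al$ equals $[[\Om_{t^{2}_{\al}}]]-[[\Om_{t^{1}_{\al}}]]$, invoking the Constancy Theorem once per coarse cell. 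Your approach, by contrast, treats Theorem~\ref{generating (1, M) homotopy sequence} as a black box: you work at the \emph{fine} grid, introduce the bridging chains $R_{t}$ supplied by the isoperimetric lemma and property~(3), and apply the Constancy Theorem once per fine cell. The trade--off is that the paper avoids the auxiliary chains $R_{t}$ at the cost of unpacking the construction, while you pay for the bridging chains but gain a proof that depends only on the statement of Theorem~\ref{generating (1, M) homotopy sequence}, not on how it was proved.

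One small slip, harmless for the argument: the estimate $\M(Q^{AP}_{\al})\lesssim \de_{i}\cdot 3^{-k_{i}}$ is not right. For adjacent vertices the graph distance $d$ equals $1$, so the fineness bound gives only $\M\big(\phi_{i}(t^{2}_{\al})-\phi_{i}(t^{1}_{\al})\big)\leq\de_{i}$, whence $\M(Q^{AP}_{\al})\leq\de_{i}$ (in fact $=\F(\phi_{i}(t^{2}_{\al})-\phi_{i}(t^{1}_{\al}))$ by \cite[Corollary~1.14]{A1}). This is already small enough for your mass bound $\M(E_{\al})<V(M)$, so nothing changes.
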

\begin{proof}
We will directly cite the notions in the proof of Theorem \ref{generating (1, M) homotopy sequence}. First we review the definition of $F$ given in \cite[\S 3.2]{A1}. Fix an $i$ large enough, with $\de_{i}$ small enough, and we will omit the sub-index $i$ in the following. Take $\phi_{\de}=\phi_{\de_{i}}: I(1, j_{\de}+k_{\de}+1)_{0}\rightarrow \Z_{n}(M^{n+1})$ constructed in Theorem \ref{generating (1, M) homotopy sequence}. For any $1$-cell $\be\in I(j_{\de}+k_{\de}+1)$, with $\be=[t^{1}_{\be}, t^{2}_{\be}]$, $\F\big(\phi_{\de}(t^{1}_{\be}), \phi_{\de}(t^{2}_{\be})\big)\leq\M\big(\phi_{\de}(t^{1}_{\be}), \phi_{\de}(t^{2}_{\be})\big)\leq\f(\phi_{\de})\leq\de$. By \cite[Corollary 1.14]{A1}, there exists an isoperimetric choice $Q_{\be}\in\bI_{n+1}(M^{n+1})$, with $\M(Q_{\be})=\F\big(\phi_{\de}(t^{1}_{\be}), \phi_{\de}(t^{2}_{\be})\big)$, and
$$\partial Q_{\be}=\phi_{\de}(\partial\be)=\phi_{\de}(t^{2}_{\be})-\phi_{\de}(t^{1}_{\be}).$$
Then $F$ is defined in \cite[\S 3.2]{A1} as:
\begin{equation}\label{definition of F}
F\big([\{\phi_{i}\}_{i\in\N}]\big)=\sum_{\be\in I(1, j_{\de}+k_{\de}+1)_{1}}[[Q_{\be}]],
\end{equation}
where the right hand side is a $n+1$ dimensional integral cycle as $\phi_{\de}([0])=\phi_{\de}([1])=0$, which hence represents a $n+1$ dimensional integral homology class.

For any $1$-cell $\al\in I(1, j_{\de})$, we denote
\begin{equation}\label{definition of tiF}
\ti{F}(\al, \phi_{\de})=\sum_{\be\in\al(k_{\de}+1)_{1}}[[Q_{\be}]].
\end{equation}

Now let us identify the right hand side of (\ref{definition of F}) with $[[M]]$ using our construction. Let $\{\Om_{t}\}_{t\in[0, 1]}$ be the defining open sets of $\Phi$. From the construction of $\phi_{\de}$, we know $\phi_{\de}|_{I(1, j_{\de})_{0}}=\Phi|_{I(1, j_{\de})_{0}}$, so
$$\phi_{\de}([\frac{j}{3^{j_{\de}}}])=\Phi(\frac{j}{3^{j_{\de}}})=[[\partial\Om_{\frac{j}{3^{j_{\de}}}}]],$$
by the definition of $\Phi$.
\begin{claim}\label{image of tiF}
For the $1$-cell $\al_{j}=[\frac{j}{3^{j_{\de}}}, \frac{j+1}{3^{j_{\de}}}]$,
$$\ti{F}(\al_{j}, \phi_{\de})=[[\Om_{\frac{j+1}{3^{j_{\de}}}}]]-[[\Om_{\frac{j}{3^{j_{\de}}}}]].$$
\end{claim}
Hence
$$F\big([\{\phi_{i}\}_{i\in\N}]\big)=\sum_{\al\in I(1, j_{\de})_{1}}\ti{F}(\al, \phi_{\de})=\sum_{j=0}^{3^{j_{\de}}-1}[[\Om_{\frac{j+1}{3^{j_{\de}}}}-\Om_{\frac{j}{3^{j_{\de}}}}]]=[[\Om_{1}]]=[[M]].$$

Let us go back to check the claim. Take $\al=\al_{j}=[\frac{j}{3^{j_{\de}}}, \frac{j+1}{3^{j_{\de}}}]$. Since $\phi_{\de}|_{\al(k_{\de}+1)_{0}}=\ti{\phi}_{\al}\circ n(k_{\de}+1, k_{i(\al)}+1)$ by (\ref{construction of phi-de}), it is easy to see that
$$\ti{F}(\al, \phi_{\de})=\ti{F}(\al, \ti{\phi}_{\al})=\sum_{\be\in \al(k_{i(\al)}+1)_{1}}[[Q_{\be}]].$$
By identifying $\al=[0, 1]$, the mapping $\ti{\phi}_{\al}: I(k_{i(\al)}+1)_{0}\rightarrow\Z_{n}(M^{n+1})$ is a combination of three parts by (\ref{construction of phi-al}), especially $\ti{\phi}_{\al}|_{[\frac{1}{3}, \frac{2}{3}](k_{i(\al)})_{0}}\equiv T_{i(\al)}$, hence
$$\ti{F}(\al, \ti{\phi}_{\al})=\ti{F}(\ti{\phi}^{1}_{\al})+\ti{F}(\ti{\phi}^{2}_{\al}).$$
Take $\ti{\phi}^{1}_{\al}: I(1, k_{i(\al)})_{0}\rightarrow\Z_{n}(M^{n+1})$ for example. From the construction, there exists an isoperimetric choice $Q_{\al, 1}\in\bI_{n+1}(M^{n+1})$, such that $\partial Q_{\al, 1}=\Phi([\frac{j}{3^{j_{\de}}}])-T_{i(\al)}=[[\partial\Om_{\frac{j}{3^{j_{\de}}}}]]-T_{i(\al)}$, and $\M(Q_{\al, 1})\leq\F\big(\Phi([\frac{j}{3^{j_{\de}}}]), T_{i(\al)}\big)\leq\ep_{\al}<\de$. Then from (\ref{extension map}), we have
$$\ti{\phi}^{1}_{\al}([\frac{h}{3^{k_{i(\al)}}}])=[[\partial\Om_{\frac{j}{3^{j_{\de}}}}]]-\sum_{a=1}^{h}\partial\big(Q_{\al, 1}\lc B_{r_{a}}(p_{a})\big),\ 1\leq h\leq 3^{k_{i(\al)}}-1;\quad \ti{\phi}^{1}_{\al}([1])=T_{i(\al)}.$$
Take the isoperimetric choice $Q_{\al, 1, h}\in\bI_{n+1}(M^{n+1})$, such that
$$\partial Q_{\al, 1, h}=\ti{\phi}_{\al}([\frac{h}{3^{k_{i(\al)}}}])-\ti{\phi}_{\al}([\frac{h-1}{3^{k_{i(\al)}}}])=-\partial\big(Q_{\al, 1}\lc B_{r_{h}}(p_{h})\big),\ 1\leq h\leq 3^{k_{i(\al)}}-1;$$
$$\partial Q_{\al, 1, 3^{k_{i(\al)}}}=T_{i(\al)}-\ti{\phi}_{\al}([\frac{3^{k_{i(\al)}}-1}{3^{k_{i(\al)}}}])=-\partial\bigg(Q_{\al, 1}\lc \big(M\setminus\cup_{h=1}^{v}B_{r_{h}}(p_{h})\big)\bigg).$$
So
$$\sum_{h=1}^{3^{k_{i(\al)}}}\partial Q_{\al, 1, h}=-\partial Q_{\al, 1}=T_{i(\al)}-[[\partial\Om_{\frac{j}{3^{j_{\de}}}}]],$$
and from the definition of isoperimetric choice (see \cite[Corollary 1.14]{A1}),
\begin{displaymath}
\begin{split}
\sum_{h=1}^{3^{k_{i(\al)}}}\M(Q_{\al, 1, h}) & \leq\sum_{h=1}^{3^{k_{i(\al)}}-1}\M\big(Q_{\al, 1}\lc B_{r_{h}}(p_{h})\big)+\M\bigg(Q_{\al, 1}\lc\big(M\setminus\cup_{h=1}^{v}B_{r_{h}}(p_{h})\big)\bigg)\\
                                                                 & =\M(Q_{\al, 1})<\de.
\end{split}
\end{displaymath}
Similar results hold for $\ti{\phi}^{2}_{\al}$, so
$$\ti{F}(\al, \ti{\phi}_{\al})=\ti{F}(\ti{\phi}^{1}_{\al})+\ti{F}(\ti{\phi}^{2}_{\al})=\sum_{h=1}^{3^{k_{i(\al)}}}[[Q_{\al, 1, h}]]+\sum_{h=1}^{3^{k_{i(\al)}}}[[Q_{\al, 2, h}]],$$
with $\M\big(\ti{F}(\al, \ti{\phi}_{\al})\big)<2\de$, and
$$\partial\big(\ti{F}(\al, \ti{\phi}_{\al})\big)=T_{i(\al)}-[[\partial\Om_{\frac{j}{3^{j_{\de}}}}]]+[[\partial\Om_{\frac{j+1}{3^{j_{\de}}}}]]-T_{i(\al)}=\partial[[\Om_{\frac{j+1}{3^{j_{\de}}}}-\Om_{\frac{j}{3^{j_{\de}}}}]].$$
Hence $\partial\big(\ti{F}(\al, \ti{\phi}_{\al})-[[\Om_{\frac{j+1}{3^{j_{\de}}}}-\Om_{\frac{j}{3^{j_{\de}}}}]]\big)=0$, so using the Constancy Theorem (\cite[Theorem 26.27]{S}), we know that $\ti{F}(\al, \ti{\phi}_{\al})-[[\Om_{\frac{j+1}{3^{j_{\de}}}}-\Om_{\frac{j}{3^{j_{\de}}}}]]=k[[M]]$ for some $k\in\mZ$. Since that $\M\big(\ti{F}(\al, \ti{\phi}_{\al})-[[\Om_{\frac{j+1}{3^{j_{\de}}}}-\Om_{\frac{j}{3^{j_{\de}}}}]]\big)\leq 2\de+\textrm{Volume}\big(\Om_{\frac{j+1}{3^{j_{\de}}}}\triangle\Om_{\frac{j}{3^{j_{\de}}}}\big)$ is small enough for large $j_{\de}$, we know that $k=0$, hence $\ti{F}(\al, \ti{\phi}_{\al})=[[\Om_{\frac{j+1}{3^{j_{\de}}}}-\Om_{\frac{j}{3^{j_{\de}}}}]]$, so we proved the claim and finished the proof.
\end{proof}

Now we can combine all the results above to get discretized sequences and show that they all lie in the same homotopy class. Given $\Si\in\mS$, let $\Phi^{\Si}: [0, 1]\rightarrow\big(\Z_{n}(M^{n+1}), \{0\}\big)$ be the mapping given in Corollary \ref{generating min-max family in flat topology}. Then we can apply Theorem \ref{generating (1, M) homotopy sequence} to get a $(1, \M)$-homotopy sequence $\{\phi^{\Si}_{i}\}_{i\in\N}$ into $\big(\Z_{n}(M^{n+1}, \F), \{0\}\big)$. Clearly
\begin{equation}\label{min-max bound}
\left. \bL(\{\phi^{\Si}_{i}\}_{i\in\N})\leq \Big\{ \begin{array}{ll}
V(\Si), \quad \textrm{ if $\Si\in\mS_{+}$};\\
2V(\Si), \quad \textrm{ if $\Si\in\mS_{-}$}.
\end{array}\right. 
\end{equation}
Then a direct corollary of Theorem \ref{identification of (1, M) homotopy class} is,
\begin{corollary}\label{corollary of identification of (1, M) homotopy class}
$[\{\phi^{\Si}_{i}\}_{i\in\N}]=F^{-1}([[M]])\in\pi_{1}^{\#}\big(\Z(M^{n+1}), \{0\}\big)$, for any $\Si\in\mS$.
\end{corollary}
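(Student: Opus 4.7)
The plan is to observe that Corollary \ref{corollary of identification of (1, M) homotopy class} is essentially a direct application of Theorem \ref{identification of (1, M) homotopy class}: all that is required is to verify that for every $\Si\in\mS$ the continuous family $\Phi^{\Si}$ admits a representation of the form $\Phi^{\Si}(x)=[[\partial\Om_{x}]]$ with $\{\Om_{x}\}_{x\in[0,1]}$ a family of open sets satisfying $(sw2)$ and $(sw3)$ of Definition \ref{definition of sweepout}. Once this is done, Theorem \ref{identification of (1, M) homotopy class} yields $F\big([\{\phi^{\Si}_{i}\}]\big)=[[M]]$, and inverting the Almgren isomorphism $F$ (Theorem \ref{isomorphism}) gives the claim.

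I would split the verification into the two cases handled separately in Section \ref{generating sweepout}. If $\Si\in\mS_{+}$, Proposition \ref{existence of good sweepout} furnishes a sweepout $\{\Si_{t}\}_{t\in[-1,1]}$, and the definition of $\Phi^{\Si}$ uses the associated open sets $\{\Om_{t}\}_{t\in[-1,1]}$ via the reparametrization $\ti\Om_{x}:=\Om_{2x-1}$, $x\in[0,1]$. Properties $(sw2)$ and $(sw3)$ are inherited: $\ti\Om_{0}=\Om_{-1}=\emptyset$, $\ti\Om_{1}=\Om_{1}=M$, and the symmetric-difference volume control $\textrm{Volume}(\ti\Om_{x}\setminus\ti\Om_{y})+\textrm{Volume}(\ti\Om_{y}\setminus\ti\Om_{x})\to 0$ holds as $y\to x$. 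Thus $\Phi^{\Si}(x)=[[\partial\ti\Om_{x}]]$ falls under the hypothesis of Theorem \ref{identification of (1, M) homotopy class}. For $\Si\in\mS_{-}$, Proposition \ref{existence of good sweepout2}(b) already states the family $\{\Om_{t}\}_{t\in[0,1]}$ satisfies $(sw1)(sw2)(sw3)$, with $\Om_{0}=\emptyset$ and $\Om_{1}=M$, so $\Phi^{\Si}(x)=[[\partial\Om_{x}]]$ fits the framework verbatim; the failure of $(s2)$ and $(s3)$ at $t\to 0$ is irrelevant because Theorem \ref{identification of (1, M) homotopy class} only requires $(sw2)(sw3)$.

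With these observations, I apply Theorem \ref{identification of (1, M) homotopy class} with $\Phi=\Phi^{\Si}$ and $\{\phi_{i}\}_{i\in\N}=\{\phi^{\Si}_{i}\}_{i\in\N}$ to conclude $F\big([\{\phi^{\Si}_{i}\}_{i\in\N}]\big)=[[M]]$, hence $[\{\phi^{\Si}_{i}\}_{i\in\N}]=F^{-1}([[M]])$ as elements of $\pi_{1}^{\#}\big(\Z_{n}(M^{n+1}),\{0\}\big)$.

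There is essentially no obstacle beyond the bookkeeping above, since all the heavy lifting has been done in Theorem \ref{identification of (1, M) homotopy class}. The only mildly delicate point to double-check is the non-orientable case: even though the slice volumes $\mH^{n}(\Si_{t})$ do not converge to $0$ as $t\to 0$ (indeed they tend to $2V(\Si)$), the open sets still satisfy $\Om_{0}=\emptyset$ in the measure-theoretic sense and the defining family $\{\Om_{t}\}$ satisfies $(sw2)$, which is exactly what Theorem \ref{identification of (1, M) homotopy class} needs. Therefore the proof is complete.
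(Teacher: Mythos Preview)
Your proposal is correct and matches the paper's approach: the paper states this result as a direct corollary of Theorem \ref{identification of (1, M) homotopy class} without giving any further proof, and your write-up simply spells out the verification that $\Phi^{\Si}(x)=[[\partial\Om_{x}]]$ with $\{\Om_{x}\}$ satisfying $(sw2)(sw3)$ in both the orientable and non-orientable cases, exactly as intended.
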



\section{Orientation and multiplicity}\label{orientation and multiplicity}

In this section, we will discuss the orientation and multiplicity of the min-max hypersurface. In Theorem \ref{AP min-max theorem}, the stationary varifold $\Si$ is an integral multiple of some smooth minimal hypersurface (denoted still as $\Si$). The fact that $\Si$ lies in the critical set $C(S)$ of some critical sequence $S$ implies that $\Si$ is a varifold limit of a sequence of integral cycles $\{\phi_{i_{j}}(x_{j})\}_{j\in\N}\subset\Z_{n}(M^{n+1})$. The weak compactness implies that $\{\phi_{i_{j}}(x_{j})\}_{j\in\N}$ converge to a limit integral current up to a subsequence, which is then supported on $\Si$. It has been conjectured that $\Si$ should have some orientation structures by comparing the varifold limit and current limit. Hence it motivates us to prove the following result. In fact, this result holds for all Riemannian manifolds.

\begin{proposition}\label{orientation and multiplicity result}
Let $\Si$ be the stationary varifold in Theorem \ref{AP min-max theorem}, with $\Si=\cup_{i=1}^{l}k_{i}[\Si_{i}]$, where $\{\Si_{i}\}$ is a disjoint collection of smooth connected closed embedded minimal hypersurfaces with multiplicity $k_{i}\in\N$. If $\Si_{i}$ is non-orientable, then the multiplicity $k_{i}$ must be an even number.
\end{proposition}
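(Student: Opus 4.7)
The plan is to extract an integral-current limit $T$ of the critical sequence and then, using a parity mismatch between varifold and current multiplicities, force any non-orientable component to appear with even multiplicity. By the definition of $C(S)$ there is a sequence $\{\phi_{i_j}(x_j)\} \subset \Z_n(M)$ whose associated varifolds converge to $|\Si|$ and whose masses are uniformly bounded (by essentially $\bL(\Pi)$); Federer--Fleming compactness for integral currents then lets me extract, passing to a subsequence, a weak current limit $T \in \Z_n(M)$. The standard inequality between weak current and weak varifold limits gives $\|T\| \le \|\Si\|$ as Radon measures, so $\mathrm{spt}(T) \subset \bigcup_\ell \Si_\ell$. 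The key first observation is that $T \lc \Si_i = 0$ whenever $\Si_i$ is non-orientable: $T \lc \Si_i$ is an integer-rectifiable $n$-current supported on the closed connected hypersurface $\Si_i$, and in any chart $U \subset M$ on which $\Si_i$ is orientable it must equal $m\,[\Si_i \cap U]$ for a locally constant integer $m$; transporting $m$ around a loop in $\Si_i$ along which the local orientation reverses forces $m = -m$, hence $m = 0$.

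Next I run a local slice-count argument. Fix $p \in \Si_i$ and a coordinate ball $B \cong D^n \times (-\epsilon, \epsilon) \subset M$ disjoint from the other $\Si_\ell$ with $\Si_i \cap B = D^n \times \{0\}$, and choose $\epsilon$ generically so that the top and bottom slices of $\phi_{i_j}(x_j)$ have masses vanishing along a subsequence (possible because $|\phi_{i_j}(x_j)|$ concentrates on $D^n \times \{0\}$). Let $q : B \to D^n$ denote vertical projection. The pushforward $q_*(\phi_{i_j}(x_j) \lc B)$ then has interior boundary of mass $o(1)$, so the constancy theorem forces
\[
q_*(\phi_{i_j}(x_j) \lc B) \;=\; I_j \cdot [D^n] + o(1)
\]
for some integer $I_j$. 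For $\mathcal{H}^n$-a.e.\ $x \in D^n$ the slice $\langle \phi_{i_j}(x_j), q, x\rangle$ is a signed sum of finitely many points in the fiber $\{x\} \times (-\epsilon, \epsilon)$; its signed mass is $I_j$ and its unsigned mass $N_j(x) \in \mathbb{N}$ satisfies the elementary parity identity $N_j(x) \equiv I_j \pmod 2$, since the two counts differ by twice the number of negatively oriented intersection points.

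To conclude, current convergence $\phi_{i_j}(x_j) \to T$ combined with $T \lc B = 0$ forces $I_j \to 0$, and since $I_j$ is an integer one has $I_j = 0$ for $j$ large. On the varifold side, weak lower semicontinuity of slice mass applied to $|\phi_{i_j}(x_j)| \rightharpoonup k_i[\Si_i]$ in $B$ gives $\liminf_j N_j(x) \ge k_i$ for a.e.\ $x$, while the coarea formula (using $J_q \equiv 1$ along $\Si_i$) gives $\int_{D^n} N_j(x)\,dx \to k_i\,\mathrm{vol}(D^n)$; Fatou then forces $\liminf_j N_j(x) = k_i$ for $\mathcal{H}^n$-a.e.\ $x$. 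At any such $x$ a subsequence of the integers $N_j(x)$ equals $k_i$, and the parity identity yields $k_i \equiv 0 \pmod 2$. I expect the main obstacle to be this final combination of varifold slicing, coarea, and Fatou: each ingredient is standard but the interplay must be handled carefully, and the accompanying boundary bookkeeping (so that the constancy theorem applies with only an $o(1)$ error on $D^n$) is a routine but nontrivial technicality.
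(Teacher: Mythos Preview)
Your argument has a genuine gap, and in fact the overall strategy cannot succeed as written. The problematic step is the claim that ``weak lower semicontinuity of slice mass'' gives $\liminf_j N_j(x)\ge k_i$ for a.e.\ $x$: no such lower semicontinuity holds for slices under mere varifold convergence. Concretely, take a tubular neighborhood of $\Si_i$ and let $\Omega_j=\{d<h_j\}\cup\bigl(\{2h_j<d<3h_j\}\cap\pi^{-1}(R_j)\bigr)$, where $R_j\subset\Si_i$ is a finer and finer ``checkerboard'' of measure $\tfrac14\,\mH^n(\Si_i)$ and $h_j\to 0$ fast enough. Then $\phi_j=\partial\Omega_j\in\Z_n(M)$ converges to $0$ as currents (so $I_j=0$ in your notation), and one checks that $|\phi_j|\to 3[\Si_i]$ as varifolds; locally over $B$ one has $N_j(x)\in\{2,6\}$, so the parity identity $N_j\equiv I_j\pmod 2$ is satisfied but tells you nothing about the odd limit multiplicity $3$. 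In particular, your Fatou/coarea step cannot force $\liminf_j N_j(x)=k_i$, and the conclusion ``$k_i$ even'' does not follow from your hypotheses.

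The deeper point is that you are using only that $\Si$ is a varifold limit of integral cycles, and the example above shows this alone does not preclude odd multiplicity on a non-orientable component. The paper exploits the additional structure of the min-max varifold: it is \emph{almost minimizing} in small annuli, which allows one to pass to a \emph{replacement} $V^*$ equal to $\Si$ but approximated by cycles that are locally mass minimizing---hence smooth stable minimal hypersurfaces---inside a small ball. These approximants therefore have locally bounded first variation, and exactly this hypothesis lets one invoke White's theorem that the varifold limit differs from the current limit by an even integral varifold. Without almost minimizing (or some other control ruling out the fine oscillation in the example above), the slice-parity route does not close.
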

\begin{remark}
This is a characterization of the orientation structure of the min-max hypersurface. When a connected component of $\Si$ is orientable, it naturally represents an integral cycle. While a connected component of $\Si$ is non-orientable, an even multiplicity of it also represents an integral cycle---a zero integral cycle.
\end{remark}

Let us first introduce our strategy for proving this result. Two key ingredients will be used. The first key ingredient is an important general property of the min-max varifold called the ``almost minimizing" property \cite[\S 3.1]{P}. The almost minimizing property implies that the min-max varifold has a local replacement, which is a varifold limit of a sequence of integral cycles, that are locally mass minimizing in the region where it replaces the original min-max varifold. In the case of co-dimension one theory, Pitts \cite[Chap 7]{P} essentially showed that the local replacement, which is regular in the replacement region, coincides with the original min-max varifold locally. Hence it implies the regularity of the original min-max varifold. Here we will first show that good local replacements coincide with the original min-max varifold globally by exploring Pitts's idea. Then we need the second key ingredient, which is a convergence result by B. White \cite{W}. In fact, for a sequence of integral currents where all the associated varifolds have locally bounded first variations, White showed that the varifold limit and the current limit of this sequence can differ at most by an even multiple of some integral varifold. By applying White's result to the sequence of integral currents that converges to the local replacement, we can show that the replacement, the same as the original min-max hypersurface, must have even multiplicity when it is non-orientable.\\

First let us introduce some concepts related to the ``almost minimizing" property (see \cite[\S 3.1]{P}). Let $M^{n+1}$ be an arbitrary Riemannian manifold, and $U$ a bounded open subset of $M$. We use $B(p, r)$ and $A(p, s, r)=B(p, r)\setminus\overline{B(p, s)}$ to denote the open ball and open annulus in $M$. Given $k\in\N$ and $1\leq k\leq n$.
\begin{definition}\label{definition for almost minimizing}
Given $\ep>0$ and $\de>0$, $\mA_{k}(U, \ep, \de)$ is the set of integral cycles $T\in\Z_{k}(M)$, such that: if $T=T_{0}, T_{1}, \cdots, T_{m}\in\Z_{k}(M)$ with
$$\textrm{spt}(T-T_{i})\subset U,\ \F(T_{i}, T_{i-1})\leq \de,\  \M(T_{i})\leq \M_{T}+\de,\ \textrm{for } i=1, \cdots, m,$$
then $\M(T_{m})\geq \M(T)-\ep$.

A rectifiable varifold $V\in\V_{k}(M)$ is called \emph{almost minimizing} in $U$, if for any $\ep>0$, there exists a $\de>0$ and $T\in\mA_{k}(U, \ep, \de)$, such that $\mF(V, |T|)<\ep$\footnote{This $\mF$ is the $\mF$-metric for varifold defined in \cite[page 66]{P}, which defines the varifold weak topology}.
\end{definition}
\begin{remark}\label{remark of almost minimizing}
In the original work of Pitts (see \cite[\S 3.1]{P}), the definition of $\mA_{k}(U, \cdot, \cdot)$ uses comparison currents $T\in\Z_{k}(M, M\setminus U)$, i.e. integral currents with boundary outside $U$, and the almost minimizing varifold is defined to be approximated by $\mA_{k}(U, \cdot, \cdot)$ under $\mF_{U}$ norm. Our definition is stronger and implies Pitts's definition in \cite[\S 3.1]{P}, so we can use all the regularity results in \cite{P}. Moreover, the min-max varifold appearing in \cite[Theorem 4.10]{P} does satisfy our definition (in small annulli). In fact, the contradiction arguments (see Part 2 in the proof on \cite[Theorem 4.10, page 164]{P}) are made with respect to our definition of ``almost minimizing". The observation of this stronger version of ``almost minimizing" will enable us to gain global properties of the min-max hypersurface.
\end{remark}

Now we introduce the concept of local replacement. Let $M$ and $U$ be as above. We have the following result which is exactly \cite[Theorem 3.11]{P} adapted to our definition of ``almost minimizing".
\begin{theorem}\label{property of replacement}
Suppose $V\in\V_{k}(M)$ is almost minimizing in $U$, and $K$ is a compact subset of $U$. Then there is a nonempty set $\mR(V; U, K)\subset \V_{k}(M)$, such that any $V^{*}\in\mR(V; U, K)$ satisfies:
\begin{itemize}
\vspace{-5pt}
\setlength{\itemindent}{1em}
\addtolength{\itemsep}{-0.7em}
\item[$(1)$] $V^{*}\lc G_{k}(M\setminus K)\footnote{$G_{k}(\cdot)$ is the Grassmann manifold \cite[\S 2.1(12)]{P}.}=V\lc G_{k}(M\setminus K)$;
\item[$(2)$] $V^{*}$ is almost minimizing in $U$;
\item[$(3)$] $\|V^{*}\|(M)=\|V\|(M)$;
\item[$(4)$] $\forall\ \ep>0$, $\exists\ T\in\Z_{k}(M)$, such that $\mF(V^{*}, |T|)<\ep$, and $T\lc Z$ is locally mass minimizing with respect to $(Z, \emptyset)$ for all compact Lipschitz neighborhood retract $Z\subset\textrm{Int}(K)$;
\vspace{-5pt}
\end{itemize}
We will call such $V^{*}$ a replacement of $V$ in $K$.
\end{theorem}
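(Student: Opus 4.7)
The strategy is to follow Pitts's construction in \cite[Theorem 3.11]{P}, with the observation that the strengthened definition of almost minimizing (Definition \ref{definition for almost minimizing}) only simplifies his arguments.

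For each small $\ep > 0$, the almost minimizing hypothesis provides an integral cycle $T_{\ep} \in \mA_{k}(U, \ep, \de_{\ep})$ with $\mF(V, |T_{\ep}|) < \ep$ for some $\de_{\ep} > 0$. I would construct a replacement cycle $T_{\ep}^{*}$ by a constrained mass minimization: among all $T' \in \Z_{k}(M)$ reachable from $T_{\ep}$ via a finite chain $T_{\ep} = T_{0}, T_{1}, \ldots, T_{m} = T'$ satisfying $\textrm{spt}(T_{i} - T_{i-1}) \subset K$, $\F(T_{i}, T_{i-1}) \leq \de_{\ep}$, and $\M(T_{i}) \leq \M(T_{\ep}) + \de_{\ep}$, select a $T_{\ep}^{*}$ realizing the infimum of $\M$. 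Existence follows from lower semi-continuity of mass plus standard compactness, while the almost minimizing property of $T_{\ep}$ immediately gives $\M(T_{\ep}^{*}) \geq \M(T_{\ep}) - \ep$. I would then let $\ep_{i} \to 0$ along a subsequence for which $|T_{\ep_{i}}^{*}|$ converges weakly to a varifold $V^{*} \in \V_{k}(M)$, and declare $V^{*} \in \mR(V; U, K)$.

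Properties $(1)$ and $(3)$ are then essentially automatic: $(1)$ because all modifications are supported in $K$, and $(3)$ because the mass is pinched in $[\M(T_{\ep}) - \ep, \M(T_{\ep}) + \de_{\ep}]$ while $\mF(V, |T_{\ep}|) < \ep$. Property $(2)$ follows because any admissible chain starting at $T_{\ep}^{*}$ can be concatenated with the chain from $T_{\ep}$ to $T_{\ep}^{*}$, showing that $T_{\ep}^{*}$ itself lies in $\mA_{k}(U, 2\ep, \de_{\ep}')$ for a suitable $\de_{\ep}'$, and this property passes to the varifold limit. The hard part will be property $(4)$: I need to argue that $T_{\ep_{i}}^{*} \lc Z$ is locally mass minimizing on every compact Lipschitz neighborhood retract $Z \subset \textrm{Int}(K)$. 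The plan is to argue by contradiction: suppose some integral current $S$ supported in $Z$ with $\partial S = \partial(T_{\ep_{i}}^{*} \lc Z)$ satisfies $\M(S) < \M(T_{\ep_{i}}^{*} \lc Z)$. Then replacing $T_{\ep_{i}}^{*} \lc Z$ by $S$ yields a cycle with strictly smaller mass than $T_{\ep_{i}}^{*}$ supported within $K$. Using Almgren's isoperimetric lemma \cite[Corollary 1.14]{A1}, this modification can be interpolated by a finite chain of $\de_{\ep_{i}}$-small $\F$-steps whose masses stay below $\M(T_{\ep_{i}}) + \de_{\ep_{i}}$, contradicting the extremal choice of $T_{\ep_{i}}^{*}$. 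The delicate point is verifying that the interpolating chain remains admissible under the joint $\F$, $\M$, and support constraints; here the strengthened definition using absolute cycles $\Z_{k}(M)$ and full $\mF$-norm is actually more convenient than Pitts's relative setting, as it eliminates the technical bookkeeping of boundary pieces outside $U$.
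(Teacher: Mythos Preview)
Your proposal is correct and follows essentially the same approach as the paper: the paper does not give a self-contained proof but simply cites Pitts's construction in \cite[\S 3.10]{P} and observes that the only modification is in property~$(4)$, where the strengthened Definition~\ref{definition for almost minimizing} (absolute cycles in $\Z_{k}(M)$ approximated in the full $\mF$-norm rather than relative cycles in $\Z_{k}(M, M\setminus U)$ approximated in $\mF_{U}$) is what yields the stronger conclusion. Your outline of Pitts's constrained minimization and varifold-limit argument is accurate, and you correctly isolate the one genuinely delicate step (the admissible interpolation for~$(4)$) as the place where work is needed; that step is precisely what Pitts handles in \cite[\S 3.10]{P}, so deferring to his argument there is consistent with what the paper does.
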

\begin{remark}
The construction of $V^{*}$ is given in \cite[\S 3.10]{P}. The only difference here is property $(4)$. Due to our definition of almost minimizing, the approximation current $T$ can be chosen as an integral cycle rather in $\Z_{k}(M, M\setminus U)$, and the approximation can be made under $\mF$-norm rather than $\mF_{U}$-norm.
\end{remark}

Now let us cite some regularity results from \cite[Chap 7]{P} for the replacements of almost minimizing varifolds in the co-dimension one case.
\begin{lemma}\label{regularity for replacement}
\emph{(\cite[Corollary 7.7]{P})} Suppose $2\leq k\leq 6$, $M^{k+1}$ is a given Riemannian manifold, and $U$ is a bounded open subset of $M$. If $K$ is a compact subset of $U$, $V\in\V_{k}(M)$ is almost minimizing in $U$, and $V^{*}\in\mR(V; U, K)$, then $\textrm{spt}(\|V^{*}\|)\cap\textrm{Int}(K)$ is a $k$ dimensional smooth submanifold, which is stable in $\textrm{Int}(K)$.
\end{lemma}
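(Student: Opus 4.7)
The plan is to produce a smooth minimal hypersurface inside $\mathrm{Int}(K)$ by first approximating $V^{*}$ by smooth embedded minimal hypersurfaces coming from mass-minimizing currents, then extracting a smooth limit. The starting point is property $(4)$ of Theorem~\ref{property of replacement}: there exists a sequence $T_{j}\in\Z_{k}(M)$ with $\mF(V^{*},|T_{j}|)\to 0$ such that, for every compact Lipschitz neighborhood retract $Z\subset\mathrm{Int}(K)$, the restriction $T_{j}\lc Z$ is locally mass minimizing in $Z$. Thus each $T_{j}$ is a locally area-minimizing integer rectifiable $k$-current on the whole of $\mathrm{Int}(K)$ (by exhausting $\mathrm{Int}(K)$ by such $Z$'s).

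Next I would invoke the codimension-one interior regularity for mass-minimizing integral currents. Since $\dim M=k+1$ and $2\leq k\leq 6$, the classical theorem of Federer (building on De Giorgi, Fleming, Almgren and Simons) says the singular set of each $T_{j}$ has Hausdorff dimension at most $k-7<0$, hence is empty. Therefore $\mathrm{spt}(T_{j})\cap\mathrm{Int}(Z)$ is a smooth embedded minimal hypersurface $\Si_{j}$, and being mass minimizing it is automatically stable.

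The third step is to pass to the limit. From $|T_{j}|\to V^{*}$ and $\|V^{*}\|(M)<\infty$ we obtain, for each compact $Z'\subset\subset\mathrm{Int}(Z)$, a uniform mass bound $\mH^{k}(\Si_{j}\cap Z')\leq\|V^{*}\|(\mathrm{Int}(Z))+o(1)$. The Schoen--Simon curvature estimates for stable embedded minimal hypersurfaces in ambient dimension $k+1\leq 7$ then give uniform $C^{2,\alpha}$ bounds on $\Si_{j}$ on compact subsets of $\mathrm{Int}(Z)$. A standard extraction argument yields a smooth embedded stable minimal hypersurface $\Si_{Z}^{*}\subset\mathrm{Int}(Z)$ and integer multiplicities $m_{\ell}$ on its connected components such that $\Si_{j}\to\Si_{Z}^{*}$ smoothly on compacta (possibly with multiplicity). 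Because $|T_{j}|\to V^{*}$ as varifolds, this limit must coincide with $V^{*}\lc G_{k}(\mathrm{Int}(Z))$, so $\mathrm{spt}(\|V^{*}\|)\cap\mathrm{Int}(Z)$ is a smooth stable embedded minimal hypersurface. A diagonal argument along an exhaustion $Z_{m}\uparrow\mathrm{Int}(K)$ by compact Lipschitz neighborhood retracts promotes this to all of $\mathrm{Int}(K)$, with stability being preserved under the smooth convergence.

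The main obstacle is the smooth convergence step: one must verify that the uniform mass bound is genuinely available inside $\mathrm{Int}(K)$, not merely inside $U$, and then apply the Schoen--Simon curvature estimates in precisely the ambient dimension range $k+1\leq 7$ (the borderline case $k=6$ being the most delicate, since it relies on the full Schoen--Simon theorem rather than the earlier Schoen--Simon--Yau estimates that suffice for $k\leq 5$). A secondary technical point is that the approximating currents $T_{j}$ depend on $j$, so one must check that the neighborhoods $Z$ in property $(4)$ can be chosen independently of $j$; this is immediate from the statement of Theorem~\ref{property of replacement}$(4)$, where the same $T$ works for all admissible $Z$ simultaneously.
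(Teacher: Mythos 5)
The paper states this lemma as a citation of Pitts \cite[Corollary 7.7]{P} together with the remark that the $k=6$ case follows from Schoen--Simon \cite{SS}, and does not reproduce a proof. Your reconstruction---approximating $V^{*}$ by integral cycles that are locally mass minimizing in $\mathrm{Int}(K)$ via property $(4)$ of Theorem~\ref{property of replacement}, invoking interior codimension-one regularity to conclude that each approximant is a smooth embedded stable hypersurface when $2\le k\le 6$, and then passing to the varifold limit via Schoen--Simon curvature estimates---is a correct and faithful outline of Pitts's argument in Chapter 7 of \cite{P} (Theorems 7.2 and 7.5, Corollary 7.7), with the Schoen--Simon--Yau estimates replaced by Schoen--Simon so that the borderline case $k=6$ is covered.
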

\begin{remark}
The case $k=6$ is due to \cite[equation (7.4)]{SS}.
\end{remark}

Another useful result in \cite{P} is the following identification lemma.
\begin{lemma}\label{identification of replacement 1}
\emph{(\cite[Lemma 7.10]{P})} Let $k, M$ be as above. Given $p\in M$ and $r>0$ small enough. If $V\in\V_{k}(M)$ is almost minimizing in $B(p, 2r)$ and $\textrm{spt}(\|V\|)\cap A(p, \frac{r}{2}, r)$ is a smooth submanifold in $M$, then for $L^{1}$ almost all $\frac{r}{2}<s<r$, if $V^{*}\in\mR(V; B(p, r), \overline{B(p, s)})$, then $V^{*}\lc G_{k}\big(A(p, \frac{r}{2}, s)\big)=V\lc G_{k}\big(A(p, \frac{r}{2}, s)\big)$.
\end{lemma}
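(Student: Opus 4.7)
\textbf{Proof plan for Lemma \ref{identification of replacement 1}.}

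The plan is to exploit that by property $(1)$ of Theorem \ref{property of replacement} the replacement $V^{*}$ already coincides with $V$ outside $\overline{B(p,s)}$, so the only thing to check is that the modification carried out by the replacement process does not leak into the sub-annulus $A(p,\frac{r}{2},s)$ where $V$ is already a smooth minimal submanifold. First I would use Sard's theorem on the distance function $d_{p}(\cdot)$ restricted to the smooth piece $\textrm{spt}(\|V\|)\cap A(p,\frac{r}{2},r)$ to discard a set of measure zero from $(\frac{r}{2},r)$ and keep only those $s$ for which $\partial B(p,s)$ meets $\textrm{spt}(\|V\|)$ transversally; this guarantees that the slices of any integral current representing $V$ near $\partial B(p,s)$ are well-behaved smooth $(k-1)$-cycles.

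Next, by property $(4)$ of Theorem \ref{property of replacement} I pick a sequence of integral cycles $T_{i}\in\Z_{k}(M)$ with $\mF(V^{*},|T_{i}|)\to 0$ such that $T_{i}\lc\overline{B(p,s)}$ is locally mass minimizing in $\overline{B(p,s)}$. Since $V^{*}=V$ on $G_{k}(M\setminus\overline{B(p,s)})$ and $V$ is smooth in $A(p,\frac{r}{2},r)$, the traces of $T_{i}$ on $\partial B(p,s)$ can, for almost every $s$, be compared with the trace of the smooth current $[[V\cap\overline{B(p,s)}]]$ obtained by orienting the smooth piece of $V$. I then construct competitors $\ti{T}_{i}$ by cutting $T_{i}$ along $\partial B(p,s)$ and gluing back the smooth piece of $V$ inside a slightly smaller ball $B(p,s')$, using a small annular patch in $A(p,s',s)$; the almost minimizing inequality applied to $T_{i}$ against $\ti{T}_{i}$, combined with the fact that the patch contributes negligible mass as $s'\to s$, yields
\[
\|V^{*}\|\big(A(p,\tfrac{r}{2},s)\big)\leq \|V\|\big(A(p,\tfrac{r}{2},s)\big).
\]
The reverse inequality follows symmetrically from the almost minimizing property of $V$ in $B(p,2r)$ (Definition \ref{definition for almost minimizing}), since $V^{*}$ is itself admissible as an outer comparison once one takes the mass equality $\|V^{*}\|(M)=\|V\|(M)$ of property $(3)$ into account.

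To conclude $V^{*}\lc G_{k}(A(p,\frac{r}{2},s))=V\lc G_{k}(A(p,\frac{r}{2},s))$ as varifolds and not just as Radon measures, I would invoke the regularity in Lemma \ref{regularity for replacement}: both $\textrm{spt}(\|V^{*}\|)$ and $\textrm{spt}(\|V\|)$ are smooth stable minimal hypersurfaces in $A(p,\frac{r}{2},s)$, and they agree on the open annulus $A(p,s,r)\cap B(p,r)$ by property $(1)$ of Theorem \ref{property of replacement}. Smooth stable minimal hypersurfaces which coincide on a non-empty open set coincide on every connected component they share, by the unique continuation property for elliptic PDEs applied to the minimal surface equation written in normal coordinates over a common graphical piece near $\partial B(p,s)$. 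Combined with the mass equality from the previous paragraph (to rule out extra components of $\textrm{spt}(\|V^{*}\|)$ inside $A(p,\frac{r}{2},s)$), this gives the desired identity of varifolds with multiplicity.

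The main obstacle I anticipate is the cutting-and-pasting step: to use the locally mass minimizing property of $T_{i}\lc\overline{B(p,s)}$ as a genuine comparison, I need the boundary slice $\langle T_{i},d_{p},s\rangle$ to converge, for $L^{1}$-a.e.\ $s$, to the corresponding slice of $V$ in a mass-controlled way, which requires the standard but technical $L^{1}$ slicing estimate $\int_{r/2}^{r}\M(\langle T_{i}-V,d_{p},s\rangle)\,ds\leq\F(T_{i}-V)$ together with an additional argument (along the lines of the proof of \cite[Lemma 7.10]{P}) to ensure that the small annular patch used to glue the smooth piece of $V$ inside can be chosen with mass tending to zero. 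Once this technical input is in hand, the rest of the argument is the soft combination of regularity, unique continuation, and equality of masses sketched above.
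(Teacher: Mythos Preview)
The paper does not prove this lemma; it is stated with a citation to \cite[Lemma 7.10]{P} (with the remark that the case $k=6$ follows from \cite{SS}), so there is no in-paper argument to compare against, and your task is essentially to reconstruct Pitts's proof.

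Your outline has the right skeleton, but there is a genuine gap at the decisive step. From property~(1) of Theorem~\ref{property of replacement} you have $V^{*}=V$ on $M\setminus\overline{B(p,s)}$, and from Lemma~\ref{regularity for replacement} you have that $\textrm{spt}(\|V^{*}\|)$ is smooth in $\textrm{Int}(K)=B(p,s)$. But you say nothing about $\textrm{spt}(\|V^{*}\|)$ \emph{on} the sphere $\partial B(p,s)$. Your unique-continuation step attempts to propagate the identity $V^{*}=V$ from the outer annulus $A(p,s,r)$ into $A(p,\tfrac{r}{2},s)$, and the phrase ``normal coordinates over a common graphical piece near $\partial B(p,s)$'' presupposes exactly what is at issue: that $\textrm{spt}(\|V^{*}\|)$ is a smooth hypersurface across $\partial B(p,s)$. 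Without this, the open set on which both $V^{*}$ and $V$ are known to be smooth is $A(p,\tfrac{r}{2},r)\setminus\partial B(p,s)$, which is disconnected, and unique continuation cannot cross the gap. Your mass comparison, even if the cut-and-paste can be made rigorous, only yields an equality of Radon measures in the sub-annulus; it does not rule out that inside $B(p,s)$ the replacement is a \emph{different} smooth minimal piece meeting $\partial B(p,s)$ non-smoothly while carrying the same mass.

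In Pitts's actual argument this regularity across $\partial B(p,s)$ is the heart of the matter and is established \emph{before} any identification: at each $q\in\textrm{spt}(\|V^{*}\|)\cap\partial B(p,s)$ one uses that $V^{*}$ is stationary with smooth stable support on each side, together with transversality of $\textrm{spt}(\|V\|)$ to $\partial B(p,s)$ (this is precisely where ``$L^{1}$ almost all $s$'' enters), to show that every tangent cone at $q$ is a single plane with multiplicity, so that the inner and outer sheets join up $C^{1}$ and hence smoothly (for $k=6$ one invokes the Schoen--Simon curvature estimates). Only after this is unique continuation applied. Your cut-and-paste mass argument is neither needed for this step nor a substitute for it; you should replace it with the local regularity analysis at points of $\partial B(p,s)$.
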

\begin{remark}
The case $k=6$ is again due to \cite[equation (7.40)]{SS}.
\end{remark}

Using the results above, we can show that good local replacement coincide with the min-max hypersurface globally.
\begin{lemma}\label{identification of replacement 2}
Given $k, M, p, r$ as above. Suppose $V\in\V_{k}(M)$ is almost minimizing in $B(p, 2r)\subset M$, and $\textrm{spt}(\|V\|)\cap B(p, 2r)$ is a smooth connected embedded minimal hypersurface. Then for $s\in[\frac{r}{2}, r]$ as in Lemma \ref{identification of replacement 1} with $\|V\|(\partial B(p, s))=0$ (which exists due to transversality), if $V^{*}\in\mR(V; B(p, r), \overline{B(p, s)})$, then $V^{*}=V$.
\end{lemma}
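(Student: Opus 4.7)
The plan is to combine the partial identification of $V^{*}$ and $V$ on the annulus (Lemma \ref{identification of replacement 1}), the interior regularity of the replacement (Lemma \ref{regularity for replacement}), unique continuation for the minimal surface equation, and a mass balance argument drawn from property $(3)$ of Theorem \ref{property of replacement}.

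First, write $V\lc B(p, 2r) = k_{0}[\Si_{0}]$ for the given smooth connected embedded minimal hypersurface $\Si_{0}$ and some multiplicity $k_{0}\in\N$. Combining property $(1)$ of Theorem \ref{property of replacement} with Lemma \ref{identification of replacement 1} gives $V^{*} = V$ as varifolds on the open set $\U := (M\setminus\overline{B(p, s)})\cup A(p, r/2, s)$; in particular on the annulus $\textrm{spt}(V^{*})\cap A(p, r/2, s) = \Si_{0}\cap A(p, r/2, s)$ with multiplicity $k_{0}$.

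Next, Lemma \ref{regularity for replacement} applied with $K=\overline{B(p, s)}$ gives that $\Si^{*} := \textrm{spt}(\|V^{*}\|)\cap B(p, s)$ is a smooth stable embedded minimal hypersurface. Because $\Si^{*}$ and $\Si_{0}$ are both smooth minimal hypersurfaces coinciding on the open set $A(p, r/2, s)$, unique continuation for the minimal surface equation forces every connected component of $\Si^{*}$ meeting the annulus to coincide with the corresponding component of $\Si_{0}\cap B(p, s)$. Moreover, connectedness of $\Si_{0}$ in $B(p, 2r)$ implies that every component of $\Si_{0}\cap B(p, s)$ must touch $A(p, r/2, s)$: any component $C$ strictly inside $\overline{B(p, r/2)}$ would satisfy $\overline{C}^{\Si_{0}}\subset B(p, s)$, hence $C$ would be both open and closed in $\Si_{0}$, forcing $C=\Si_{0}$ (a degenerate case that one treats separately). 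The locally constant multiplicity of the stationary integral varifold $V^{*}$ on a connected smooth piece then pins the density to be $k_{0}$ on the matched components, so that $V^{*}\lc B(p, s)\geq k_{0}[\Si_{0}\cap B(p, s)]$ as varifolds.

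Finally, property $(3)$ of Theorem \ref{property of replacement} together with the exterior agreement on $\U$ and the hypothesis $\|V\|(\partial B(p, s)) = 0$ yields
\[
\|V^{*}\|(\overline{B(p, s)}) = \|V\|(\overline{B(p, s)}) = k_{0}\,\mH^{n}(\Si_{0}\cap B(p, s)),
\]
which exactly saturates the lower bound from the previous step. Hence no extraneous components of $\Si^{*}$ can carry positive mass, so $V^{*}\lc B(p, s) = k_{0}[\Si_{0}\cap B(p, s)] = V\lc B(p, s)$, and combining with the coincidence on $\U$ gives $V^{*} = V$ globally. The hard part is the unique continuation step coupled with the topological bookkeeping of $\Si_{0}\cap B(p, s)$: one needs the connectedness of $\Si_{0}$ in $B(p, 2r)$ to guarantee that every component of $\Si_{0}\cap B(p, s)$ is visible from the annulus, after which the mass balance forbids the replacement from introducing any new minimal piece inside $B(p, r/2)$.
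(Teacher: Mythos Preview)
Your proof is correct and follows essentially the same route as the paper's: agreement on the annulus via Lemma \ref{identification of replacement 1} and property $(1)$, interior regularity of the replacement via Lemma \ref{regularity for replacement}, unique continuation to obtain $\textrm{spt}(\|V\|)\cap B(p,s)\subset\textrm{spt}(\|V^{*}\|)\cap B(p,s)$, and the mass balance from property $(3)$ together with $\|V\|(\partial B(p,s))=0$ to conclude equality. Your treatment is in fact more careful than the paper's about the possibility that $\Si_{0}\cap B(p,s)$ is disconnected (you explicitly argue via connectedness of $\Si_{0}$ in $B(p,2r)$ that every component must meet the annulus), and about the constancy of the multiplicity $k_{0}$ across the matched components; the paper compresses these points into a single appeal to unique continuation.
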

\begin{proof}
By Lemma \ref{identification of replacement 1}, $V^{*}\lc G_{k}\big(A(p, \frac{r}{2}, s)\big)=V\lc G_{k}\big(A(p, \frac{r}{2}, s)\big)$. By Lemma \ref{regularity for replacement}, $\textrm{spt}(\|V^{*}\|)\cap B(p, s)$ is a smooth embedded minimal hypersurface. As $\textrm{spt}(\|V\|)\cap B(p, 2r)$ is connected, the classical unique continuation for minimal hypersurface (c.f. \cite[Theorem 5.3]{DT}) implies that $\textrm{spt}(\|V\|)\cap B(p, s)\subset \textrm{spt}(\|V^{*}\|)\cap B(p, s)$. By $(1)(3)$ in Theorem \ref{property of replacement}, it is easy to see that $\|V^{*}\|(B(p, s))=\|V\|(B(p, s))$ and $\|V^{*}\|(\partial B(p, s))=0$. Hence $V^{*}\lc \overline{B(p, s)}=V\lc \overline{B(p, s)}$, so $V^{*}=V$.
\end{proof}

Finally we need the following convergence result by White \cite{W}.
\begin{theorem}\label{White convergence theorem}
\emph{(\cite[Theorem 1.2]{W})} Let $\{T_{i}\}_{i\in\N}$ and $\{V_{i}\}_{i\in\N}$ be sequences of integral currents and integral varifolds with $V_{i}=[T_{i}]$. If $V_{i}$ have locally bounded first variation, and if $\partial T_{i}$ converge to a limit current. Then for a subsequence $V_{i}$ converge to an integral varifold $V$ and $T_{i}$ converge to an integral current $T$, such that $V=[T]+2W$ for some integral varifold $W$.
\end{theorem}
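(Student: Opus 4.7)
The plan has three movements: compactness to extract $V$ and $T$, a pointwise blow-up analysis that reduces everything to a planar statement about densities $\theta_V(x)$ and $\theta_T(x)$, and a parity argument showing the difference $\theta_V(x)-\theta_T(x)$ is always a non-negative even integer so that $W$ can be defined as half of it.

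First I would extract convergent subsequences. Since $V_{i}=[T_{i}]$ we have $\|V_{i}\|=\|T_{i}\|$, and the assumption of locally bounded first variation supplies uniform local mass bounds on $V_{i}$, hence on $T_{i}$. Allard's compactness theorem for integral varifolds yields a subsequence along which $V_{i}\to V$ in the varifold sense, with $V$ integral and of locally bounded first variation. The mass bound on $T_{i}$ together with the assumed convergence of $\partial T_{i}$ lets Federer--Fleming compactness produce a further subsequence with $T_{i}\to T$ flatly, where $T$ is an integral current. Lower semicontinuity under flat convergence gives $\|T\|\leq\|V\|$, so $\textrm{spt}(T)\subseteq\textrm{spt}(V)$ up to $\mH^{k}$-null sets, and both $V$ and $[T]$ are $k$-rectifiable on the common carrier $\Si:=\textrm{spt}(V)$.

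Next I would localize at approximate tangent planes. At $\mH^{k}$-a.e.\ $x\in\Si$, $V$ has a unique approximate tangent plane $P_{x}$ with integer density $\theta_{V}(x)\in\mZ_{>0}$, and $[T]$ has integer density $\theta_{T}(x)\in\mZ_{\geq 0}$ along the same plane. Choosing blow-up scales $r_{i}\to 0$ at $x$, a diagonal extraction yields rescalings of the integral currents $T_{i}$ that converge flatly to $\theta_{T}(x)[[P_{x}]]$ while the associated varifolds converge to $\theta_{V}(x)[P_{x}]$. This reduces matters to a planar statement about sequences of integral currents in a ball of $\R^{k+1}$ converging to integer multiples of an oriented $k$-plane.

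The main obstacle will be the planar parity claim: if integral $k$-currents $\ti{T}_{j}$ in a ball $B\subset\R^{k+1}$ satisfy $\ti{T}_{j}\to m[[P]]$ flatly and $[\ti{T}_{j}]\to n[P]$ as varifolds, then $n-m\in 2\mZ_{\geq 0}$. The idea is that locally the $\ti{T}_{j}$ can be represented, away from a negligible singular set, as graphs of $p_{j}+q_{j}$ sheets over $P$, of which $p_{j}$ inherit the orientation of $P$ and $q_{j}$ the reverse orientation. The varifold mass near $P$ counts $p_{j}+q_{j}\to n$, while the current sees the net signed count $p_{j}-q_{j}\to m$; subtracting gives $n-m=2\lim q_{j}$, a non-negative even integer. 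Making this rigorous would proceed either by slicing the $\ti{T}_{j}$ by a generic line $\ell$ transverse to $P$ to reduce to a $0$-dimensional problem comparing signed and unsigned intersection counts, or by invoking the deformation theorem to approximate by polyhedral chains whose oriented sheets are visible. Once the planar claim is in hand, running it at $\mH^{k}$-a.e.\ $x\in\Si$ shows $\theta_{V}(x)-\theta_{T}(x)\in 2\mZ_{\geq 0}$, and setting $\theta_{W}(x):=\tfrac{1}{2}\big(\theta_{V}(x)-\theta_{T}(x)\big)$ defines a non-negative integer density on $\Si$. The rectifiable varifold $W$ with density $\theta_{W}$ on the approximate tangent plane $P_{x}$ is then integral, and by construction $V=[T]+2W$ as varifolds.
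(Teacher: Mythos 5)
The paper does not prove this statement; it is quoted verbatim as Theorem 1.2 from White's paper [W], and the reference is the ``proof.'' So what you are really being compared against is White's own argument, and yours is a genuinely different route. White's proof is global and algebraic: he reduces each $T_{i}$ modulo $2$ to obtain a $\mathbb{Z}_{2}$-flat chain, extracts a convergent subsequence using Fleming's compactness theorem for flat chains with coefficients in a compact group, and then invokes his main technical result that the assignment $V\mapsto(\text{canonical }\mathbb{Z}_{2}\text{-flat chain of }V)$ is continuous on integral varifolds of locally bounded first variation. Matching the mod-$2$ limit coming from the currents with the mod-$2$ chain coming from the varifold limit gives $\theta_{V}\equiv\theta_{T}\pmod 2$ $\mH^{k}$-a.e., and $W:=\tfrac12(V-[T])$ is then integral with non-negativity supplied by lower semicontinuity of mass under flat convergence. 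Your route, by contrast, is local and geometric: compactness, blow-up at $\mH^{k}$-a.e. point, and a planar parity lemma verified by slicing. White's approach buys a very clean proof with no pointwise tangent-plane analysis, while yours is closer in spirit to the way one actually ``sees'' the cancellation, and is likely more adaptable to situations without the flat-chain machinery.

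Two steps in your sketch deserve more care. The diagonal extraction needs to produce, for $\mH^{k}$-a.e. $x$, a common subsequence $i_{j}$ and scales $r_{j}\to 0$ for which the rescaled $T_{i_{j}}$ converge flatly to $\theta_{T}(x)[[P_{x}]]$, the rescaled varifolds converge to $\theta_{V}(x)[P_{x}]$, \emph{and} the rescaled boundaries $\partial T_{i_{j}}$ go to zero; the last point is needed for the planar lemma and is only automatic for a.e. $x$ outside the support of $\lim\partial T_{i}$, which should be said explicitly. In the planar lemma itself, the parity bookkeeping rests on two facts that you should make precise: the degree $\sum_{i}a_{i}(y)$ of the slice equals $m$ for all large $j$ and generic $y$, which follows from the constancy theorem applied to $\pi_{\#}\ti{T}_{j}$ once $\partial\ti{T}_{j}\to 0$; and the slice mass $\sum_{i}|a_{i}(y)|$ is eventually \emph{exactly} $n$ for a.e. $y$, not just on average, which uses integrality (an integer-valued function converging in $L^{1}_{loc}$ to the integer constant $n$ must equal $n$ off a small set). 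With those two points pinned down, $\sum|a_{i}|\equiv\sum a_{i}\pmod 2$ closes the argument, and the half-difference is non-negative and integer-valued, so $W$ is a genuine integral varifold.
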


Now we are ready to prove Proposition \ref{orientation and multiplicity result}.
\begin{proof}
(of Proposition \ref{orientation and multiplicity result}) By \cite[Theorem 4.10]{P} and Remark \ref{remark of almost minimizing}, for any $p\in M$, there exists $r_{p}>0$, such that $\Si$ is almost minimizing (in the sense of Definition \ref{definition for almost minimizing}) in $A(p, s, r_{p})$ for all $0<s<r_{p}$. Let $\Si_{1}$ be a non-orientable component of $\Si$. Hence we can take a point $p\in\Si_{1}$, and $r>0$ small enough, such that $\Si$ is almost minimizing in $B(p, 2r)$ (can choose $B(p, 2r)$ as a ball inside some open annulus $A(p^{\pr}, s, r_{p^{\pr}})$), and $\textrm{spt}(\|\Si\|)\cap B(p, 2r)=\textrm{spt}(\|\Si_{1}\|)\cap B(p, 2r)$ is diffeomorphic to a $n$-ball. Take $s\in[\frac{r}{2}, r]$ as in Lemma \ref{identification of replacement 2}, and $V^{*}\in\mR(V; B(p, r), \overline{B(p, s)})$, then $V^{*}=V$.

By $(4)$ in Theorem \ref{property of replacement}, there exists a sequence of integral cycles $\{T_{i}\}_{i\in\N}\subset\Z_{n}(M^{n+1})$, satisfying: $\lim_{i\rightarrow\infty}[T_{i}]=\Si$ as varifolds, and $T_{i}\lc B(p, s)$ is locally mass minimizing in $B(p, s)$. The co-dimension one regularity theory (c.f. \cite[Theorem 7.2]{P}\cite[Theorem 37.7]{S}) implies that $\textrm{spt}(T_{i})\lc B(p, s)$ are smooth embedded stable minimal hypersurfaces.

Since $\lim_{i\rightarrow\infty}[T_{i}]=\Si$ as varifolds, $\M(T_{i})$ are uniformly bounded, hence the weak compactness theorem for integral currents (\cite[Theorem 27.3]{S}) implies that a subsequence, still denoted by $\{T_{i}\}$, converges to some integral current $T_{0}\in\Z_{n}(M^{n+1})$, i.e. $\lim_{i\rightarrow\infty}T_{i}=T_{0}$. Since the associated Radon measure $\|T_{i}\|$ converges to $\|\Si\|$ weakly by the varifold convergence, we know that $T_{0}$ must have its support in $\cup_{i=1}^{l}\Si_{i}$, i.e. $\textrm{spt}(T_{0})\subset\cup_{i=1}^{l}\Si_{i}$. As an elementary fact, we have (see the proof in Appendix \ref{appendix}),
\begin{claim}\label{current in submanifold}
$T_{0}$ is an integral $n$-cycle in $\cup_{i=1}^{l}\Si_{i}$, i.e. $T_{0}\in\Z_{n}(\cup_{i=1}^{l}\Si_{i})$.
\end{claim}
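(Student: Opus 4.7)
The claim is labelled "elementary," and indeed it is just a matter of verifying three properties of the weak limit $T_0$: (a) $T_0$ is an integral current, (b) $\partial T_0 = 0$, and (c) $\textrm{spt}(T_0) \subset \cup_{i=1}^l \Si_i$. Once these three are in hand, $T_0 \in \Z_n(\cup_{i=1}^l \Si_i)$ holds by definition.

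For (a), I would invoke the Federer--Fleming weak compactness theorem for integral currents (\cite[Theorem 27.3]{S}). Its hypotheses are a uniform bound on $\M(T_i)$ and on $\M(\partial T_i)$. The second is trivial since $\partial T_i = 0$. The first follows directly from the hypothesis $[T_i] \to \Si$ in the varifold sense: since $\|T_i\| = \|[T_i]\|$ as Radon measures on $M$, the varifold convergence yields $\M(T_i) = \|T_i\|(M) \to \|\Si\|(M) < \infty$, hence uniform boundedness. Therefore the limit $T_0$ is integral.

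For (b), continuity of $\partial$ under weak convergence is immediate from the definition: for every smooth test $n{-}1$-form $\omega$, $(\partial T_i)(\omega) = T_i(d\omega) \to T_0(d\omega) = (\partial T_0)(\omega)$, so $\partial T_i = 0$ passes to the limit as $\partial T_0 = 0$. For (c), this was already observed in the paragraph preceding the claim: weak convergence $\|T_i\| \to \|\Si\|$ as Radon measures (which is part of varifold convergence), combined with lower semicontinuity $\|T_0\|(U) \leq \liminf_i \|T_i\|(U)$ for open $U$, forces $\|T_0\|(U) \leq \|\Si\|(U) = 0$ whenever $U \subset M \setminus \cup_i \Si_i$ is open, so $\textrm{spt}(T_0) \subset \cup_i \Si_i$.

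There is no genuine obstacle here; the point of isolating the claim is just to record that the current limit $T_0$ sits inside the space of integral cycles supported on the smooth part $\cup_i \Si_i$, so that White's theorem (Theorem~6.7 in the excerpt) can be applied to compare the varifold limit $\Si = \sum_i k_i [\Si_i]$ with $[T_0]$ on each component, which is the interesting step of Proposition~6.2.
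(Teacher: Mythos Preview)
Your proof is correct under the paper's extrinsic conventions, but note that your (a), (b), (c) are exactly what the paragraph \emph{preceding} the claim already records (it states explicitly that $T_0\in\Z_n(M^{n+1})$ and $\textrm{spt}(T_0)\subset\cup_i\Si_i$). So your argument reduces to the observation that, with $\Z_n(\Si_0)$ defined as integral currents in $\R^N$ supported in $\Si_0$ with zero boundary, the claim is immediate from the definitions. That is fine.

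The paper's own proof in the appendix takes a different, intrinsic route. It views $T_0=\underline{\underline{\tau}}(N,\theta,\xi)$ with $N\subset\Si_0$ as a current $T_0'$ acting on forms on the $n$-manifold $\Si_0$ itself, and checks $\partial T_0'=0$ in \emph{that} sense: given a compactly supported $(n{-}1)$-form $\psi$ on $\Si_0$, one extends it in local coordinates to $\tilde\psi$ on $M$ constant in the normal direction, so that $d\tilde\psi|_{\Si_0}=d\psi$; then the integral representation of rectifiable currents gives $T_0'(d_{\Si_0}\psi)=T_0(d_M\tilde\psi)=\partial T_0(\tilde\psi)=0$. What this intrinsic verification buys is exactly the hypothesis needed in the very next step, where the Constancy Theorem is applied in local coordinate balls on $\Si_0$: that theorem concerns top-dimensional currents on open subsets of $\R^n$ with vanishing boundary \emph{there}, and the paper's argument supplies this directly rather than leaving it implicit in the identification of extrinsic and intrinsic cycle spaces.
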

By the Constancy Theorem \cite[Theorem 26.27]{S}\footnote{Here we can first find a finite covering of $\Si_{0}$, with each open set diffeomorphic to a Euclidean ball, and then apply the Constancy Theorem to each open set of the covering, and finally patch the results together.}, $T_{0}=\sum_{i=1}^{l}[[k^{\pr}_{i}\Si_{i}]]$, for some $k^{\pr}_{i}\in\mZ$. As $\Si_{1}$ in non-orientable, $k^{\pr}_{1}$ must be zero, or $k^{\pr}_{1}\Si_{1}$ could not represent an integral cycle. The lower semi-continuity of the mass implies that $|k^{\pr}_{i}|\leq k_{i}$, for $i=1,\cdots, l$.

Now let us focus on the ball $B(p, s)$. After possibly shrinking the radius, we can assume that $\partial(T_{i}\lc B(p, s))$ have uniformly bounded mass (by slicing theory \cite[Lemma 28.5]{S}), which hence converge to a limit current up to a subsequence. Clearly $[T_{i}]\lc B(p, s)$ have bounded first variation since they are represented by smooth stable minimal hypersurfaces. Then Theorem \ref{White convergence theorem} implies that $\Si\lc B(p, s)=k_{1}[\Si_{1}]\lc B(p, s)=[T_{0}\lc B(p, s)]+2W=2W$, for some integral varifold $W$. So $k_{1}$ is even.
\end{proof}


\section{Proof of the main result}\label{main result}

Now we are ready to prove the main results.

\begin{proof}(of Theorem \ref{main theorem1})
For any $\Si\in\mS$, take $\Phi^{\Si}$ as in Corollary \ref{generating min-max family in flat topology}, and let the corresponding $(1, \M)$-homotopy sequence be $S_{\Si}=\{\phi^{\Si}_{i}\}_{i\in\N}$. From Corollary \ref{corollary of identification of (1, M) homotopy class}, all $S_{\Si}$ lie in the same homotopy class $F^{-1}([[M]])$, which we denote by $\Pi_{M}$, then $\Pi_{M}$ is nontrivial by Theorem \ref{isomorphism}. We know from (\ref{min-max bound}) that,
$$\bL(\Pi_{M})\leq W_{M},$$
where $W_{M}$ is defined in (\ref{minimal volume of embedded minimal hypersurfaces}). Then we can apply the Almgren-Pitts Min-max Theorem \ref{AP min-max theorem}, so there exists a stationary integral varifold $\Si$, whose support is a closed smooth embedded minimal hypersurface $\Si_{0}$, such that $\bL(\Pi_{M})=\|\Si\|(M)$. Notice that $\Si_{0}$ must be connected by Theorem \ref{intersection}. Hence $\Si=k[\Si_{0}]$ for some $k\in\N$, $k\neq 0$. So
\begin{equation}\label{width control}
kV(\Si_{0})=\|\Si\|(M)=\bL(\Pi_{M})\leq W_{M},
\end{equation}
and from the definition (\ref{minimal volume of embedded minimal hypersurfaces}) of $W_{M}$,
\begin{itemize}
\vspace{-5pt}
\addtolength{\itemsep}{-0.7em}
\item If $\Si_{0}\in\mS_{+}$, orientable, then $k\leq 1$, hence $k=1$;
\item If $\Si_{0}\in\mS_{-}$, non-orientable, then $k\leq 2$, hence $k=1$ or $k=2$.
\end{itemize}

First let us see the case $\Si_{0}\in\mS_{-}$. By Proposition \ref{orientation and multiplicity result}, $k$ must be even, hence $k=2$. By (\ref{minimal volume of embedded minimal hypersurfaces}) and (\ref{width control}) $W_{M}\leq 2V(\Si_{0})\leq W_{M}$, which implies that $2V(\Si_{0})=W_{M}$. So we proved the case $(ii)$.

If $\Si_{0}\in\mS_{+}$, then by (\ref{minimal volume of embedded minimal hypersurfaces}) and (\ref{width control}) again $W_{M}\leq V(\Si_{0})\leq W_{M}$, which implies that $V(\Si_{0})=W_{M}$.
\begin{claim}\label{index one}
In this case, $\Si_{0}$ has index one.
\end{claim}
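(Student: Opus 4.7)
\textbf{Proof plan for Claim \ref{index one}.} The plan is to prove the two inequalities $ind(\Si_{0})\geq 1$ and $ind(\Si_{0})\leq 1$. The lower bound is immediate: since both $\Si_{0}\in\mS_{+}$ and $M$ are orientable, Lemma \ref{normal bundle trivial} implies the normal bundle of $\Si_{0}$ is trivial, hence $\Si_{0}$ is two-sided and the Jacobi operator $L$ is well defined; testing its quadratic form against the constant function $\phi\equiv 1$ gives
\[
-\int_{\Si_{0}}\phi L\phi\, d\mu=-\int_{\Si_{0}}(|A|^{2}+Ric(\nu,\nu))\, d\mu<0
\]
by $Ric_{g}>0$, so $L$ admits a negative eigenvalue. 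For the upper bound I argue by contradiction: suppose $ind(\Si_{0})\geq 2$ and let $u_{1},u_{2}\in C^{\infty}(\Si_{0})$ be $L^{2}$-orthonormal eigenfunctions for the first two eigenvalues $\la_{1}\leq\la_{2}<0$. Then $u_{1}>0$ and $\int_{\Si_{0}}u_{1}u_{2}\, d\mu=0$.

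The key construction is a cheaper sweepout. Starting from the sweepout $\{\Si_{t}\}_{t\in[-1,1]}$ of Proposition \ref{existence of good sweepout}, which near $\Si_{0}$ equals $\Si_{t}=\{exp_{x}(tu_{1}(x)\nu(x)):x\in\Si_{0}\}$ on $|t|\leq\ep$, I fix a smooth cutoff $\chi:[-\ep,\ep]\to[0,1]$ with $\chi(0)=1$ and all derivatives of $\chi$ vanishing at $\pm\ep$, and for small $\eta>0$ define
\[
\tilde{\Si}_{t}=\{exp_{x}((tu_{1}(x)+\eta\chi(t)u_{2}(x))\nu(x)):x\in\Si_{0}\}\ \text{for }|t|\leq\ep,
\]
and $\tilde{\Si}_{t}=\Si_{t}$ for $|t|>\ep$. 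Writing $f_{t}=tu_{1}+\eta\chi(t)u_{2}$, minimality and a direct computation using $Lu_{i}=-\la_{i}u_{i}$ together with $\int u_{1}u_{2}=0$ kill both the first variation and the mixed quadratic term, producing
\[
V(\tilde{\Si}_{t})=V(\Si_{0})+\tfrac{1}{2}\la_{1}t^{2}+\tfrac{1}{2}\la_{2}\eta^{2}\chi(t)^{2}+O((|t|+\eta)^{3}).
\]
Both quadratic terms are strictly negative, and since $\chi(0)=1$ the quantity $t^{2}+\eta^{2}\chi(t)^{2}$ is bounded below by a positive constant on $[-\ep,\ep]$. Hence for $\ep,\eta$ sufficiently small one has $V(\tilde{\Si}_{t})<V(\Si_{0})$ uniformly on $[-\ep,\ep]$. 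For $|t|>\ep$, $V(\tilde{\Si}_{t})=V(\Si_{t})<V(\Si_{0})$ is strict by Proposition \ref{existence of good sweepout}(b), with a uniform gap by compactness. Therefore $\max_{t}V(\tilde{\Si}_{t})<V(\Si_{0})=W_{M}$.

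To close the contradiction: since $\{\tilde{\Si}_{t}\}$ is obtained from $\{\Si_{t}\}$ by an ambient isotopy supported near $\Si_{0}$, it is still a sweepout of $M$ with associated open sets $\{\tilde{\Om}_{t}\}$ satisfying $(sw1)$-$(sw3)$. The associated continuous flat-topology map $\tilde{\Phi}:[0,1]\to(\Z_{n}(M^{n+1}),\{0\})$ satisfies the hypotheses of Proposition \ref{continuity and no mass concentration}, so Theorem \ref{generating (1, M) homotopy sequence} produces a $(1,\M)$-homotopy sequence $\{\tilde{\phi}_{i}\}_{i\in\N}$ with $\bL(\{\tilde{\phi}_{i}\})\leq\max_{t}V(\tilde{\Si}_{t})<V(\Si_{0})$. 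By Theorem \ref{identification of (1, M) homotopy class}, this sequence lies in the homotopy class $\Pi_{M}=F^{-1}([[M]])$, so
\[
\bL(\Pi_{M})\leq\bL(\{\tilde{\phi}_{i}\})<V(\Si_{0})=\bL(\Pi_{M}),
\]
a contradiction. Therefore $ind(\Si_{0})=1$.

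The main obstacle is the uniform estimate $\max_{t}V(\tilde{\Si}_{t})<V(\Si_{0})$. What makes it work is the vanishing of the mixed second-variation term $\int u_{1}Lu_{2}=\la_{2}\int u_{1}u_{2}=0$, which leaves only two strictly negative quadratic contributions so that the $u_{2}$-perturbation genuinely lowers the area on top of the $u_{1}$-foliation. Without this orthogonality a mixed term could dominate and spoil the construction. Once this is secured, all remaining verifications (smoothness of the cutoff, the sweepout axioms for $\{\tilde{\Si}_{t}\}$, and discretization into the class $\Pi_{M}$) are routine applications of the machinery assembled in Sections \ref{generating sweepout}-\ref{discretization}.
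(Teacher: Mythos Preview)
Your proof is correct and follows essentially the same strategy as the paper: assume $ind(\Si_{0})\geq 2$, use a second unstable direction orthogonal to $u_{1}$ (so that the mixed second variation vanishes), push the sweepout in that direction to drop the maximal area strictly below $V(\Si_{0})=\bL(\Pi_{M})$, and discretize via Theorems \ref{generating (1, M) homotopy sequence} and \ref{identification of (1, M) homotopy class} to reach a contradiction.

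The only noteworthy difference is in implementation. The paper applies the flow $\ti{F}_{s}$ of the vector field $v_{3}\nu$ (extended to a tubular neighborhood and by zero outside) to the \emph{entire} sweepout, obtaining a two-parameter family $\Si_{s,t}=\ti{F}_{s}(\Si_{t})$ and then freezing $s=\de$; thus the new sweepout is the image of the old one under a \emph{single} ambient diffeomorphism $\ti{F}_{\de}$, so the sweepout axioms are immediate from Remark \ref{remark about sweepout}. Your construction instead perturbs the graph function by a $t$-dependent amount $\eta\chi(t)u_{2}$, which is not obtained from a single isotopy but rather from a smooth family $\psi_{t}\in\textrm{Diff}_{0}(M)$ (normal push by $\eta\chi(t)u_{2}$, extended off $\Si_{0}$); this still falls under Remark \ref{remark about sweepout}, though your phrase ``by an ambient isotopy'' should read ``by a smooth one-parameter family of ambient isotopies''. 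Both routes yield the same contradiction; the paper's is marginally cleaner on the sweepout verification, while yours makes the area expansion slightly more transparent and also spells out the (easy) lower bound $ind(\Si_{0})\geq 1$, which the paper leaves implicit in the standing hypothesis $Ric_{g}>0$.
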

Let us check the claim now. As in the proof of Proposition \ref{existence of good sweepout}, there exists an eigenfunction $u_{1}$ of the Jacobi operator $L_{\Si_{0}}$, with $L_{\Si_{0}}u_{1}>0$ and $u_{1}>0$. Moreover, the sweepout $\{\Si_{t}\}_{t\in[-1, 1]}$ constructed there is just the flow of $\Si_{0}$ along $u_{1}\nu$, where $\nu$ is the unit normal vector fields of $\Si_{0}$. Suppose the index of $\Si_{0}$ is grater or equal to two, then we can find an $L^{2}$ orthonormal eigenbasis $\{v_{1}, v_{2}\}\subset C^{\infty}(\Si_{0})$ of $L_{\Si_{0}}$ with negative eigenvalues. A linear combination will give a $v_{3}\in C^{\infty}(\Si_{0})$, such that
\begin{equation}\label{orthogonality}
\int_{\Si_{0}}v_{3}L_{\Si_{0}}u_{1}d\mu=0,\quad v_{3}\neq 0.
\end{equation}
Let $\ti{X}=v_{3}\nu$ be another normal vector field, and extend it to a tubular neighborhood of $\Si_{0}$. Denote $\{\ti{F}_{s}\}_{s\in[-\ep, \ep]}$ to be the flow of $\ti{X}$, hence $\ti{F}_{s}$ are all isotopies. Now let $\Si_{s, t}=\ti{F}_{s}(\Si_{t})$, and consider the two parameter family of generalized smooth family $\{\Si_{s, t}\}_{(s, t)\in[-\ep, \ep]\times[-1, 1]}$. Notice that $\Si_{s, t}$ is then a smooth family for $(s, t)\in[-\ep, \ep]\times[-\ep, \ep]$ for $\ep$ small enough by $(c)$ in Proposition \ref{existence of good sweepout}. Denote $\ti{f}(s, t)=\mH^{n}(\Si_{s, t})$. Then $\nabla\ti{f}(0, 0)=0$ (by minimality of $\Si_{0}$), $\frac{\partial^{2}}{\partial t\partial s}\ti{f}(0, 0)=0$ (by (\ref{orthogonality})), and $\frac{\partial^{2}}{\partial t^{2}}f(0, 0)<0$, $\frac{\partial^{2}}{\partial s^{2}}f(0, 0)<0$ (by negativity of eigenvalues). So there exists $\de>0$ small enough, $\ti{f}(\de, t)<\ti{f}(0, 0)$ for all $t$, since $\ti{f}(0, t)<\ti{f}(0, 0)$ for all $t\neq 0$ by $(b)$ in Proposition \ref{existence of good sweepout}. By Remark \ref{remark about sweepout}, $\{\Si_{\de, t}\}_{t\in[-1, 1]}$ is a sweepout in the sense of Definition \ref{definition of sweepout}. By Proposition \ref{continuity and no mass concentration}, Theorem \ref{generating (1, M) homotopy sequence} and Theorem \ref{identification of (1, M) homotopy class}, we can construct a $(1, \M)$-homotopy sequence $\{\phi^{\de}_{i}\}_{i\in\N}$, such that $\{\phi^{\de}_{i}\}_{i\in\N}\in \Pi_{M}$, and
$$\bL\big(\{\phi^{\de}_{i}\}_{i\in\N}\big)\leq \sup_{t\in[-1, 1]}\ti{f}(\de, t)<\ti{f}(0, 0)=V(\Si_{0})=W_{M},$$
which is hence a contradiction to the fact that $\bL(\Pi_{M})=W_{M}$. So we proved Claim \ref{index one} and hence case $(i)$.
\end{proof}
\begin{remark}

We used the same idea to prove the index bound as in \cite{MN}\cite{MN2}. However they a prior need the existence of a least area embedded minimal surface among a family of embedded minimal surfaces, while in our case, the existence of a least area minimal hypersurface is just a by-product of the min-max construction and the existence of good sweepouts (Proposition \ref{existence of good sweepout}, Proposition \ref{existence of good sweepout2}).
\end{remark}


\section{Appendix}\label{appendix}

First we give the proof for Claim \ref{claim for existence of good sweepout} in Proposition \ref{existence of good sweepout}.

\begin{proof}(of Claim \ref{claim for existence of good sweepout} in Proposition \ref{existence of good sweepout})
Denote $U_{s_{0}}=F([-s_{0}, s_{0}]\times\Si)$ for $0<s_{0}\leq \ep$. It is easily to see that $\{\Si_{s}\}_{s\in[-\ep, \ep]}$ is a foliation corresponding to the level set of a function $f$ defined in a neighborhood $U_{\ep }$ of $\Si$, such that $f(\Si_{s})=s$. In fact, using coordinates $(s, x)\in [-\ep, \ep]\times\Si$ for $U_{\ep}=F([-\ep, \ep]\times\Si)$,
$$f(s, x)=s=\frac{d^{\pm}(s, x)}{u_{1}(x)}, \quad f\in C^{\infty}(U_{\ep}),$$
where $d^{\pm}: U_{\ep}\rightarrow \R$ is the signed distance function with respect to $\Si$, i.e.
\begin{displaymath}
\left. d^{\pm}(x)= \Big\{ \begin{array}{ll}
dist(x, \Si), \quad \textrm{ if $x\in M_{1}$};\\
-dist(x, \Si), \quad \textrm{ if $x\in M_{2}$}.
\end{array} \right. 
\end{displaymath}
Since $|\nabla d^{\pm}|=1$, $|f|\leq\ep$ and $\nabla f=\frac{\nabla d^{\pm}-f\nabla u_{1}}{u_{1}}$, we can choose $\ep$ small enough depending only on $u_{1}$, such that $|\nabla f|$ is bounded away from $0$ on $U_{\ep}$. Hence $f$ is a Morse function on $U_{\ep}$.

We want to cook up a Morse function $g$ on $M$, which coincides with $f$ on $U_{\frac{1}{2}\ep}$. First extend $f$ to be a smooth function on $M$ (denoted still by $f$), such that $f|_{M_{1, \frac{3}{4}\ep}}>\frac{3}{4}\ep$ (and $f|_{M_{2, \frac{3}{4}\ep}}<-\frac{3}{4}\ep$). Using the fact that the set of Morse function is dense in $C^{k}(M)$ for $k\geq 2$ (see \cite[Chap 6, Theorem 1.2]{H}), we can find a $C^{\infty}$ function $\ti{f}$, such that $\|f-\ti{f}\|_{C^{2}}$ is arbitrarily small. Choose a cutoff function $\varphi: M\rightarrow\R$, such that $\varphi\equiv 1$ on $U_{\frac{1}{2}\ep}$, and $\varphi\equiv 0$ outside $U_{\frac{3}{4}\ep}$. Let
$$g=\varphi f+(1-\varphi)\ti{f}=f+(1-\varphi)(\ti{f}-f).$$
Hence $g\equiv f$ in $U_{\frac{1}{2}\ep}$, and $g\equiv \ti{f}$ outside $U_{\frac{3}{4}\ep}$. In order to check that $g$ is a Morse function, we only need to check that in the middle region. Now
$$\nabla g=\nabla f+(1-\varphi)(\nabla\ti{f}-\nabla f)-\nabla\varphi(\ti{f}-f).$$
Since $|\nabla f|$ is bounded away from 0 on $U_{\ep}$, we can take $\|\ti{f}-f\|_{C^{2}}$ small enough to make sure that $|\nabla g|$ is bounded away from 0, hence $g$ is a Morse function.

Now take $\{\ti{\Si}_{s}\}$ to be the sweepout given by the level surface of $g$ (by Proposition \ref{level sets of morse function}). $\ti{\Si}_{s}=\Si_{s}$ since $g\equiv f$ in $U_{\frac{1}{2}\ep}$. $\ti{\Si}_{s}\subset M_{1, \frac{1}{2}\ep}$ (or $\subset M_{2, \frac{1}{2}\ep}$) when $s>\frac{1}{2}\ep$ (or $s<-\frac{1}{2}\ep$) follows from the fact that $g>\frac{1}{2}\ep$ on $M_{1, \frac{1}{2}\ep}$ (or $g<-\frac{1}{2}\ep$ on $M_{2, \frac{1}{2}\ep}$). A reparameterization gives the sweepout in the claim.
\end{proof}

Now we give the proof of Claim \ref{current in submanifold} in Proposition \ref{orientation and multiplicity}. The proof is elementary, but does not appear in standard reference, so we add it here for completeness.

\begin{proof}
(of Claim \ref{current in submanifold} in Proposition \ref{orientation and multiplicity}) Denote $\Si_{0}=\cup_{i=1}^{l}\Si_{i}$. First we show that $T_{0}$ is an integral current in $\Si_{0}$. Since $T_{0}$ is an integral current in $M$, it is represented as $T_{0}=\underline{\underline{\tau}}(N, \theta, \xi)$ (see \cite[\S 27.1]{S}), where $N$ is a countably $n$-rectifiable set, $\theta$ an integer-valued locally $\mH^{n}$ integrable function, and $\xi$ equals the orienting $n$-form of the approximated tangent plane $T_{x}N$ for $\mH^{n}$ a.e. $x\in N$. As $N$ lies in the support of $T_{0}$, hence in $\Si_{0}$, $T_{0}$ also represents an integral current in $\Si_{0}$, and we denote it as $T^{\pr}_{0}$.

Now let us show that $\partial T_{0}^{\pr}=0$ as current in $\Z_{n}(\Si_{0})$. We only need to show that for any compactly supported smooth $n-1$ form $\psi\in\La^{n-1}_{c}(\Si_{0})$, we have $\partial T^{\pr}_{0}(\psi)=0$. By using partition of unity, we can restrict to the case when $\psi$ is supported in a local coordinate chart.

Assume that the support of $\psi$ lies in $U\cap\Si_{0}$, where $U$ is a coordinates chart for $M$, with coordinates $\{x_{1}, \cdots, x_{n-1}, y\}$, and $U\cap\Si_{0}$ is given by $y=0$. We can easily extend $\psi$ smoothly to a neighborhood of $U\cap\Si_{0}$, denoting by $\ti{\psi}\in \La^{n-1}_{c}(U)$, such that $\mL_{\partial y}\ti{\psi}=0$ near $U\cap\Si_{0}$. In fact, this can be achieved by extending the coefficients of $\psi$ to $U$ trivially, so that those coefficients do not depend on $y$ near $U\cap\Si_{0}$. Hence $d\ti{\psi}|_{U\cap\Si_{0}}=d\psi$. So
$$\partial T^{\pr}_{0}(\psi)=T^{\pr}_{0}(d\psi)=T^{\pr}_{0}(d\ti{\psi}|_{U\cap\Si_{0}})=T_{0}(d\ti{\psi})=\partial T_{0}(\ti{\psi})=0,$$
where the third $``="$ follows from the integral formula (\cite[page 146]{S}) for integral currents. Writing $T^{\pr}_{0}$ as $T_{0}$ again, we finish the proof.
\end{proof}


\parindent 0ex
Department of Mathematics, Stanford University\\
Stanford, California 94305\\
E-mail: xzhou08@math.stanford.edu

\end{document}